\documentclass[11pt,oneside,reqno]{amsart}

\usepackage[margin=1in]{geometry}
\usepackage{amsmath,amssymb,amsthm,textcomp}
\usepackage{amsfonts,graphicx}
\usepackage{amsaddr}
\usepackage[mathscr]{eucal}
\usepackage{pgfplots}
\pgfplotsset{compat=1.15}
\usepackage{subfig,tikz}
\usepackage{array} 
\usepackage{booktabs}
\usepackage{multicol}
\usepackage{multirow}
\usepackage[utf8]{inputenc}
\usepackage{bm}

\pagestyle{plain}
\usepackage{color}
 
\usepackage{float}
\usepackage{hyperref}
 \hypersetup{colorlinks,citecolor=blue}
\hypersetup{colorlinks=true,linkcolor=red,filecolor=magenta,    urlcolor=blue}

\usepackage{pslatex}
 \usepackage{array}
\usepackage[square,numbers]{natbib}
\bibliographystyle{plainnat}


\allowdisplaybreaks[1]
\theoremstyle{definition}

\interdisplaylinepenalty=0




\newcommand{\ncom}{\newcommand}

\ncom{\integ}[4]{\int_{#1}^{#2}\,{#3}\,d{#4}}
\ncom{\vspan}[1]{{{\rm\,span}\{ #1 \}}}
\ncom{\dm}[1]{ {\displaystyle{#1} } }
\ncom{\ri}[1]{{#1} \index{#1}}

\newtheorem{theorem}{\bf Theorem}[section]
\newtheorem{remark}{\bf Remark}[section]
\newtheorem{proposition}{Proposition}[section]

\newtheorem{corollary}{Corollary}[section]

\newtheorem{definition}{Definition}[section]
\newtheoremstyle
{remarkstyle}
{}
{11pt}
{}
{}
{\bfseries}
{:}
{     }
{\thmname{#1} \thmnumber{#2} }

\theoremstyle{remarkstyle}



\newcommand{\ceil}[1]{\lceil{#1}\rceil}

\DeclareFontFamily{U}{BOONDOX-calo}{\skewchar\font=45 }
\DeclareFontShape{U}{BOONDOX-calo}{m}{n}{
  <-> s*[1.05] BOONDOX-r-calo}{}
\DeclareFontShape{U}{BOONDOX-calo}{b}{n}{
  <-> s*[1.05] BOONDOX-b-calo}{}
\DeclareMathAlphabet{\mathcalboondox}{U}{BOONDOX-calo}{m}{n}
\SetMathAlphabet{\mathcalboondox}{bold}{U}{BOONDOX-calo}{b}{n}
\DeclareMathAlphabet{\mathbcalboondox}{U}{BOONDOX-calo}{b}{n}

\begin{document}
\color{black}       
\title{Non-Homogeneous Generalized Fractional Skellam Process}



\author{Kartik Tathe$^1$ and Sayan Ghosh$^2$}
\address{Department of Mathematics, Birla Institute of Technology and Science, Pilani, Hyderabad Campus, Jawahar Nagar, Kapra Mandal, Medchal District, Telangana 500078, India. }
\thanks{$^2$Corresponding author}
\email{$^1$kartikvtathe@gmail.com,$^2$sayan@hyderabad.bits-pilani.ac.in}





\subjclass[2020]{Primary: 60G22, 60G55; Secondary: 60G20, 60G51}
\keywords{Skellam process, non-homogeneous counting process, running average, time-changed subordination, LRD / SRD property.}
\begin{abstract}
This paper introduces the Non-homogeneous Generalized Skellam process (NGSP) and its fractional version NGFSP by time changing it with an independent inverse stable subordinator. We study distributional properties for NGSP and NGFSP including probability generating function, probability mass function (p.m.f.), factorial moments, mean, variance, covariance and correlation structure. Then we investigate the long and short range dependence structures for NGSP and NGFSP, and obtain the governing state differential equations of these processes along with their increment processes. We obtain recurrence relations satisfied by the state probabilities of Non-homogeneous generalized counting process (NGCP), NGSP and NGFSP. The weighted sum representations for these processes are provided. We further obtain martingale and renewal properties along with arrival time distribution for NGSP and NGFSP. An alternative version of NGFSP with a closed-form p.m.f. is introduced along with a discussion of its distributional and asymptotic properties. In addition, we study the running average processes of GCP and GSP which are of independent interest. The p.m.f. of NGSP and the simulated sample paths of GFSP, NGFSP and related processes are plotted. Finally, we discuss an application to a high-frequency financial data set pointing out the advantages of our model compared to existing ones.
\end{abstract}

\maketitle


\section{Introduction:}
 A non-negative integer-valued stochastic process $\{N(t)\}_{t\geq 0}$ satisfying $N(t_1) \leq N(t_2)$ for $t_1 < t_2$ is called a counting process. A counting process $\{N(t)\}_{t\geq 0}$ with independent and stationary increments is called a Poisson process with rate $\lambda>0$ if $N(t)$ has Poisson distribution with parameter $\lambda t$. Various generalizations of the Poisson process have been studied in the literature as discussed below.

\citet{LASKIN2003201} introduced the time fractional Poisson process using a special form of the fractional Kolmogorov–Feller equation which uses R-L fractional derivative instead of ordinary derivative in the governing state differential equation. \citet{Meerschaert2011} obtained the same process by time changing (replacing the time variable) the Poisson process with an independent inverse stable subordinator. Time fractional generalization provides the flexibility of non-stationary and dependent increments. \citet{Leonenko2017} introduced a non-homogeneous generalization of the time fractional Poisson process by replacing the time variable with an appropriate function of time (rate function).

For insurance modeling, \citet{Kostadinova2013} introduced another generalization of Poisson process, namely the Poisson process of order $k$ (PPoK) to study claim arrivals in groups of size $k$ where the number of arrivals in a group is uniformly distributed over $k$ points. Expanding further, \citet{Sengar2020} proposed a generalization of the above process to deal with dependent inter-arrival times.

Allowing various jump sizes with different intensities, \citet{Crescenzo2016} introduced the Generalized fractional Counting process (GFCP) and studied its connections with the Poisson process. They showed that it is equal in distribution to a compound Poisson process. The non-homogeneous version of generalized counting process (NGCP) was considered by \citet{Kataria2025}. 

The Skellam process is an integer valued L\'{e}vy process obtained by taking the difference of two independent Poisson processes. It was first studied by \citet{Irwin2018} for equal means and \citet{Skellam1946} for unequal means. \citet{Kerss2014} studied the time-fractional version of Skellam process by time-changing the Skellam process with an independent inverse stable subordinator. \citet{Kataria2022b} introduced and studied the Generalized Fractional Skellam process (GFSP) by considering the difference of two independent GFCPs. Recently, \citet{Gupta2020} studied another generalization of Skellam process, namely the Skellam process of order $k$ by taking the difference of two independent PPoKs along with its time-changed version. They also discussed the space-fractional and tempered space-fractional Skellam processes by time-changing the Skellam process with independent stable subordinator and tempered stable subordinator, respectively.

The applications of Skellam process cover various areas, such as modeling of intensity difference of pixels in cameras (see \citet{Hwang2007}) and modeling the difference in number of goals of two opponent teams in a football game (see \citet{Karlis2008}). \citet{Kerss2014} studied the use of fractional Skellam process in high frequency financial data analysis for modeling up and down jump processes related to tick-by-tick stock price data. Also they fitted the inter arrival time data using both exponential and Mittag-Leffer distributions and showed that the Mittag-Leffler distribution provides a better fit than the exponential distribution. However, it can be observed from figure (3) of \citet{Kerss2014} that the fractional Skellam process is not always suitable for modeling the arrivals in the up and down jump processes. In practice, the arrivals may not consistently occur at constant rates and also the magnitude of jumps (price changes) may fluctuate depending on various factors. Therefore further refinement is required to accommodate higher-order jumps and varied patterns in arrivals. Such type of data may also arise in other situations like queuing systems and traffic flow. These issues motivate us to propose a more flexible model than existing ones and investigate its characteristics for better insight.
  
In this article, we introduce the Non-homogeneous Generalized Skellam process (NGSP) and its fractional version (NGFSP). The remaining paper is organized as follows. In Section \ref{sec 2}, we discuss some preliminaries including definitions and results. In Section \ref{sec 3}, we introduce the NGSP and study its distributional properties.  In Section \ref{sec 4}, the fractional version of NGSP namely, the NGFSP is introduced and its characteristics are studied. Section \ref{sec 5} deals with the increment processes of NGSP and NGFSP. In Section \ref{sec 6}, we introduce an alternative version of NGFSP and study its distributional and dependence properties along with some limiting results. Section \ref{sec 7} discusses the running average processes for GCP and GSP along with their important characteristics. In Section \ref{sec 8}, we plot the p.m.f. of NGSP and the simulated sample paths of NGFSP and related processes. 
Finally, in Section \ref{sec 9}, we provide an application of our model to high frequency financial data.

\section{Preliminaries} \label{sec 2}
In this section, we present some preliminary results and definitions which will be used later in the paper.

\subsection{Multinomial Theorem}\label{multinomial thm} For $x_{1}, x_{2}, ..., x_{m}\in \mathbb{R}$, we have
\begin{equation*}
    (x_{1}+x_{2}+...+x_{m})^{n}= \sum_{\substack{k_{1}+k_{2}+...+k_{m}=n \\ k_{1},k_{2},...,k_{m}\geq 0}}\binom{n}{k_{1},k_{2},...,k_{m}}\prod_{j=1}^{m}x_{j}^{k_{j}}
\end{equation*}
where 
\begin{equation*}
    \binom{n}{k_{1},k_{2},...,k_{m}}=\frac{n!}{k_{1}!k_{2}!...k_{m}!}\,\,.
\end{equation*}

\subsection{Modified Bessel's function of first kind}\label{mod Bessel fun} The modified Bessel's function of first kind is given as (see \citet{Abramowitz1972})
\begin{align*}
    I_{n}(z)=\left(\frac{z}{2}\right)^{n}\sum_{k=0}^{\infty}\frac{\left(\frac{z^{2}}{4}\right)^{k}}{k!\Gamma(n+k+1)}
    =\sum_{k=0}^{\infty}\frac{z^{n+2k}}{2^{n+2k}k!(n+k)!}
\end{align*}
and satisfies the following properties (see \citet{Sneddon1966}, p. 128)

\begin{align*}
    I_{n}(z)&=I_{-n}(z),\quad
    \frac{d}{dz}I_{n}(z)=\frac{1}{2}\left(I_{n-1}(z)+I_{n+1}(z)\right).
\end{align*}

\subsection{R-L fractional derivative}\label{R-L derivative} For $\gamma>0$, the R-L fractional derivative is defined as (see \citet{Kilbas2006})
\begin{equation*}
D_{t}^{\gamma}f(t):=
\begin{cases}
\frac{1}{\Gamma(m-\gamma)}\frac{d^\alpha}{dt^\alpha}\int_{0}^{t}\frac{f(s)}{(t-s)^{\gamma+1-m}}ds, & \quad m-1<\gamma<m,\\
\frac{d^m}{dt^m}f(t), & \quad \gamma=m,
\end{cases}
\end{equation*}
where $m$ is a positive integer. 

\subsection{Caputo–Djrbashian fractional derivative}\label{caputo}
The Caputo–Djrbashian (C-D) fractional derivative is defined as (see \citet{Kilbas2006})
\begin{equation*}
    \frac{d^\alpha}{dt^\alpha}f(t)=\frac{1}{\Gamma(1-\alpha)}\int_{0}^{t}\frac{df(\tau)}{d\tau}\frac{d\tau}{(t-\tau)^{\alpha}}, \quad 0<\alpha<1.
\end{equation*}
Its Laplace transform is 
\begin{equation*}
    \mathcal{L}\left\{\frac{d^\alpha}{dt^\alpha}f\right\}(s)=s^\alpha \mathcal{L}\{f\}(s)-s^{\alpha-1}f(0^+).
\end{equation*}
The following relationship holds for the R-L fractional derivative and the Caputo–Djrbashian fractional derivative (see \citet{Meerschaert2013}):
\begin{equation*}
\frac{d^\alpha}{dt^\alpha}f(t)=D_{t}^{\alpha}f(t)-f(0+)\frac{t^{-\alpha}}{\Gamma(1-\alpha)}.
\end{equation*}

\subsection{Stable subordinator and its inverse}\label{stable subordinator} A stable subordinator $\{D_{\alpha}(t)\}_{t\geq 0}$, $0<\alpha<1$, is a non-decreasing L\'{e}vy process. The Laplace transform of its density is given by $\mathbb{E}(e^{-lD_\alpha(t)})=e^{-tl^\alpha}$, $l>0$ and the associated Bernstein function is $f(l)=l^\alpha$. Its first passage time $\{Y_\alpha(t)\}_{t\geq 0}$ is called the inverse stable subordinator and defined as  
\begin{equation*}
Y_\alpha(t):=\inf\{x\geq 0:D_{\alpha}(x)>t \}.
\end{equation*}
The density of $Y_\alpha(t)$ denoted by $h_\alpha(x,t)$ (see \citet{Meerschaert2011}) has the Laplace transform (see \citet{Meerschaert2013}) $\tilde{h}_\alpha(x,l)=l^{\alpha-1}e^{-xl^{\alpha}}.$
The mean, variance and covariance (see \citet{Leonenko2014}) of $Y_\alpha(t)$ are 
\begin{align*}
\mathbb{E}(Y_{\alpha}(t))=\frac{t^\alpha}{\Gamma(\alpha+1)}, \quad
\mathbb{V}(Y_\alpha(t))=\left(\frac{2}{\Gamma(2\alpha+1)}-\frac{1}{\Gamma^2(\alpha+1)}\right)t^{2\alpha},\\
\text{Cov}\left(Y_\alpha(s),Y_\alpha(t)\right)=\frac{1}{\Gamma^2(\alpha+1)}\left(\alpha s^{2\alpha}B(\alpha,\alpha+1)+F(\alpha:s,t)\right),\quad 0<s<t,
\end{align*}
where $F(\alpha:s,t)=\alpha t^{2\alpha}B(\alpha,\alpha+1:s/t)-(ts)^\alpha$. Here $B(\alpha,\alpha+1)$ and $B(\alpha,\alpha+1:s/t)$ denote the beta function and the incomplete beta function, respectively. Moreover from \citet{Leonenko2014}, we have
\begin{equation*}
F(\alpha:s,t)\sim\frac{-\alpha^2}{(\alpha+1)}\frac{s^{\alpha+1}}{t^{1-\alpha}} \quad \text{as} \quad t\rightarrow \infty.
\end{equation*}
Thus we obtain for fixed $s$ and large $t$
\begin{equation*}
\text{Cov}\left(Y_\alpha(s),Y_\alpha(t)\right)\sim\frac{1}{\Gamma^2(\alpha+1)}\left(\alpha s^{2\alpha}B(\alpha,\alpha+1)-\frac{\alpha^2}{(\alpha+1)}\frac{s^{\alpha+1}}{t^{1-\alpha}}\right),\quad 0<s<t.
\end{equation*}

\begin{remark}
    The Dickman subordinator and its inverse (see \citet{Gupta2024}) are generalizations of the stable and inverse stable subordinators, respectively. Moreover \citet{Gupta2024} introduced the generalized C-D and generalized R-L fractional derivatives corresponding to the Dickman subordinator. So the results obtained using C-D and R-L fractional derivatives in this paper may be studied using these generalized derivatives too.    
\end{remark}

\subsection{LRD and SRD properties}\label{LRD/SRD} For fixed $s>0$, suppose the correlation function of a stochastic process $\{Y(t)\}_{t\geq 0}$ has the following asymptotic behavior (see \citet{DOVIDIO2014} or \citet{Maheshwari2016}): 
\begin{equation*}
\text{Corr}\left(Y(s), Y(t)\right)\sim c(s)t^{-\theta} \quad\text{as} \quad t\rightarrow \infty \quad \text{for some} \quad c(s)>0.
\end{equation*}
Then $\{Y(t)\}_{t\geq 0}$ has long range dependence (LRD) property if $\theta\in (0,1)$ and short range dependence (SRD) property if $\theta\in (1,2)$. 

\subsection{Lemma} (\citet{Xia2018}) If $\{X(t)\}_{t\ge 0}$ is a L\'{e}vy process and its Riemann integral is defined by $Y(t)=\int_{0}^{t}X(s)ds,$
then the characteristic function of $Y(t)$ satisfies
\begin{equation*}
    \phi_{Y(t)}(u)=\mathbb{E}[e^{iuY(t)}]=e^{t\left(\int_{0}^{1}\log\phi_{X(1)}(tuz)dz\right)}, \quad u\in\mathbb{R} \label{running average lemma}
\end{equation*}
where $\phi_{X(1)}$ is the characteristic function of $X(t)$ at $t=1$.


\subsection{Non-homogeneous generalized counting process (NGCP)}\label{ngcp}
A special case of the GFCP corresponding to $\alpha=1$ is called the generalized counting process (GCP). \citet{Kataria2025} introduced a non-homogeneous version of the GCP (NGCP). Suppose $\{\mathcal{M}(t)\}_{t\geq 0}$ is a counting process with deterministic and time-dependent intensity functions $\lambda_j:[0,\infty)\to [0,\infty)$ for $ 1\leq j \leq k$ and cumulative rate functions $\Lambda_{j}(t)=\int_{0}^{t}\lambda_{j}(u)du$ such that $\mathcal{M}(0)=0$. Then $\{\mathcal{M}(t)\}_{t\geq 0}$ is a NGCP if it has independent increments and its transition probabilities are given by
\begin{align*}
P(\mathcal{M}(t+h)=n\mid\mathcal{M}(t)=m) &=
\begin{cases}
1 - \sum_{j=1}^{k}\lambda_j(t) + o(h), & n=m, \\
\sum_{j=1}^{k}\lambda_j(t) + o(h), & n=m+j, j=1,2,\ldots,k, \\
o(h), & n>m+k,
\end{cases}
\end{align*}
where $o(h)\rightarrow 0$ as $h\rightarrow 0$. The mean, variance and covariance of NGCP are given by
\begin{align*}
    \mathbb{E}\left(\mathcal{M}(t)\right)=\sum_{j=1}^{k}j\Lambda_j(t), \quad
    \mathbb{V}\left(\mathcal{M}(t)\right)=\sum_{j=1}^{k}j^2\Lambda_j(t), \quad\text{and} \quad \text{Cov}\left(\mathcal{M}(s), \mathcal{M}(t)\right)=\sum_{j=1}^{k}j^2\Lambda_j(s\wedge t).
\end{align*}

\section{Non-Homogeneous generalized Skellam Process}\label{sec 3}
 In this section, we introduce the Non-homogeneous Generalized Skellam Process and study its various characteristics.  
 \begin{definition} \label{defngsp} The Non-Homogeneous Generalized Skellam Process (NGSP) is defined as
\begin{equation*}
    \mathcalboondox{S}(t):=\mathcal{M}_{1}(t)-\mathcal{M}_{2}(t)
\end{equation*}
where $\{\mathcal{M}_{1}(t)\}_{t\geq 0}$ and $\{\mathcal{M}_{2}(t)\}_{t\geq 0}$ are two independent NGCPs with intensity parameters 
\begin{align*}
    \lambda_{j} : [0,\infty)\rightarrow [0,\infty)\quad
    \text{and} \quad \gamma_{j} :[0,\infty)\rightarrow [0,\infty), \quad j=1,2,...,k
\end{align*}
 respectively such that $\mathcal{M}_{1}(0)=0$ and $\mathcal{M}_{2}(0)=0$.
\end{definition}
The cumulative rate functions of $\{\mathcal{M}_{1}(t)\}_{t\geq 0}$ and $\{\mathcal{M}_{2}(t)\}_{t\geq 0}$ are 
\begin{align*}
    \Lambda_{j}(t)=\int_{0}^{t}\lambda_{j}(u)du <\infty\quad
    \text{and} \quad T_{j}(t)=\int_{0}^{t}\gamma_{j}(u)du<\infty
\end{align*}
respectively. If $0\leq s<t$, then
\begin{align*}
    \Lambda_{j}(s,t)=\int_{s}^{t}\lambda_{j}(u)du =\Lambda_{j}(t)-\Lambda_{j}(s)
    \quad \text{and} \quad T_{j}(s,t)=\int_{s}^{t}\gamma_{j}(u)du =T_{j}(t)-T_{j}(s).
\end{align*}
\begin{remark}
Note that for $\lambda_{j}(t)=\lambda_{j}$ and $\gamma_{j}(t)=\gamma_{j}$, NGSP reduces to the Generalized Skellam Process (see \citet{Kataria2022b}). Moreover if $\lambda_j(t)=\lambda$ and $\gamma_{j}(t)=\gamma$, then NGSP reduces to the Skellam Process of order $k$ (see \citet{Gupta2020}) and to the usual Skellam process for $k=1$.    
\end{remark}

\subsection{Probability Generating Function of NGSP}
The probability generating function (p.g.f.) of NGSP for $|u|<1$ can be obtained using the p.g.f. of NGCP (see \citet{Kataria2025}) as follows:
\begin{align}\label{2}
   G_{\mathcalboondox{S}}(u,t)=\exp\left(\sum_{j=1}^{k}\Lambda_{j}(t)(u^{j}-1)\right) \exp\left(\sum_{j=1}^{k}T_{j}(t)(u^{-j}-1)\right)= \exp\left(\sum_{j=1}^{k}\left(\Lambda_{j}(t)(u^{j}-1)+T_{j}(t)(u^{-j}-1)\right)\right).
\end{align}
It satisfies the following differential equation:
\begin{equation*}
     \frac{d}{dt}G_{\mathcalboondox{S}}(u,t)= G_{\mathcalboondox{S}}(u,t)\left\{ \sum_{j=1}^{k}\left[ \left(\frac{d}{dt}\Lambda_{j}(t)\right)(u^{j}-1)+\left(\frac{d}{dt}T_{j}(t)\right)(u^{-j}-1)\right] \right\}.
\end{equation*}

\subsection{Mean, Variance and Covariance}
Using the mean, variance and covariance of NGCP (see Section \ref{ngcp}), we can obtain the corresponding expressions for NGSP as follows:
\begin{align*}
     \mathbb{E}(\mathcalboondox{S}(t))&=\sum_{j=1}^{k}j\left(\Lambda_{j}(t)-T_{j}(t)\right), \quad
    \mathbb{V}\left(\mathcalboondox{S}(t)\right)= \sum_{j=1}^{k}j^{2}\left(\Lambda_{j}(t)+T_{j}(t)\right)\,\,,\\
    \text{Cov}\left(\mathcalboondox{S}(s),\mathcalboondox{S}(t)\right) &= \text{Cov}(\mathcal{M}_1(s)-\mathcal{M}_2(s), \mathcal{M}_1(t)-\mathcal{M}_2(t))=\sum_{j=1}^{k}j^{2}\left(\Lambda_{j}(s\wedge t)+T_{j}(s\wedge t)\right).
\end{align*}
Also note that $\mathbb{V}\left(\mathcalboondox{S}(t)\right)-\mathbb{E}(\mathcalboondox{S}(t))
    =\sum_{j=1}^{k}\Lambda_{j}(t)(j^2-j)+\sum_{j=1}^{k}T_{j}(t)(j^2+j) > 0,$
which implies the NGSP exhibits over-dispersion.

\begin{remark}
The $k$th raw moment of NGSP is given by
\begin{align*}
\mathbb{E}(\mathcalboondox{S}^k(t)) &= \left(u\frac{\partial}{\partial u}\right)^k G_{\mathcalboondox{S}}(u,t)\big\vert_{u=1} = \sum_{j=1}^{k}j!\left(\Lambda_{j}(t)-T_{j}(t)\right),\quad k\ge 1.
\end{align*}
\end{remark}

\subsection{Correlation structure of NGSP}
For fixed $s<t$, the correlation function of NGSP is
\begin{align*}
    \text{Corr}\left(\mathcalboondox{S}(s), \mathcalboondox{S}(t)\right)=\frac{\text{Cov}\left(\mathcalboondox{S}(s),\mathcalboondox{S}(t)\right)}{\sqrt{\mathbb{V}\left(\mathcalboondox{S}(s)\right)}\sqrt{\mathbb{V}\left(\mathcalboondox{S}(t)\right)}}&=\frac{\sum_{j=1}^{k}j^{2}\left(\Lambda_{j}(s)+T_{j}(s)\right)}{\sqrt{\sum_{j=1}^{k}j^{2}\left(\Lambda_{j}(s)+T_{j}(s)\right)}\sqrt{\sum_{j=1}^{k}j^{2}\left(\Lambda_{j}(s)+T_{j}(s)\right)}}\\
    &=\frac{\sqrt{\sum_{j=1}^{k}j^{2}\left(\Lambda_{j}(s)+T_{j}(s)\right)}}{\sqrt{\sum_{j=1}^{k}j^{2}\left(\Lambda_{j}(t)+T_{j}(t)\right)}}\,\,.\label{14}
\end{align*}
    Assume both NGCPs have Weibull rate functions given by
\begin{equation*}
    \Lambda_{j}(t)=\left(\frac{t}{b_{1j}}\right)^{c_{1j}},\,\,T_{j}(t)=\left(\frac{t}{b_{2j}}\right)^{c_{2j}};\quad b_{1j}, b_{2j}> 0,~  c_{1j}, c_{2j}\geq0\,\,
\end{equation*}
and $c_{1j}\geq c_{2j}$ without loss of generality. Then the correlation function reduces to
\begin{align*}
    \text{Corr}\left(\mathcalboondox{S}(s), \mathcalboondox{S}(t)\right)&=\sqrt{\frac{\sum_{j=1}^{k}j^{2}\left(\left(\frac{s}{b_{1j}}\right)^{c_{1j}}+\left(\frac{s}{b_{2j}}\right)^{c_{2j}}\right)}{\sum_{j=1}^{k}j^{2}\left(\left(\frac{t}{b_{1j}}\right)^{c_{1j}}+\left(\frac{t}{b_{2j}}\right)^{c_{2j}}\right)}}
    = \sqrt{\frac{\sum_{j=1}^{k}j^{2}s^{c_{2j}}\left(\left(\frac{s^{c_{1j}-c_{2j}}}{b_{1j}^{c_{1j}}}\right)+\left(\frac{1}{b_{2j}}\right)^{c_{2j}}\right)}{\sum_{j=1}^{k}j^{2}\left(\left(\frac{t^{c_{1j}-c_{2j}}}{b_{2j}^{c_{2j}}}\right)+\left(\frac{1}{b_{2j}}\right)^{c_{2j}}\right)}}t^{-c_{2j}/2}
    \end{align*}
    which may be expressed as
    \begin{equation*}
    \text{Corr}\left(\mathcalboondox{S}(s), \mathcalboondox{S}(t)\right)= C_{0}t^{-d/2}, \,\,\text{where}\,\,d=c_{2j}\,\,\text{and}\,\,  C_{0}=\sqrt{\frac{\sum_{j=1}^{k}j^{2}s^{c_{2j}}\left(\left(\frac{s^{c_{1j}-c_{2j}}}{b_{1j}^{c_{1j}}}\right)+\left(\frac{1}{b_{2j}}\right)^{c_{2j}}\right)}{\sum_{j=1}^{k}j^{2}\left(\left(\frac{t^{c_{1j}-c_{2j}}}{b_{2j}^{c_{2j}}}\right)+\left(\frac{1}{b_{2j}}\right)^{c_{2j}}\right)}}.\label{15}
    \end{equation*}
Hence we observe that for Weibull rate functions, NGSP possesses the LRD property if $d\in(0,2)$ and the SRD property if $ d\in(2,4)$.

\subsection{Factorial moments for NGSP}
The $r^{th}$ factorial moment of NGSP is defined as 
\begin{equation*}
\Psi_{\mathcalboondox{S}}(r,t) = \mathbb{E}((\mathcalboondox{S}(t))_r) = \mathbb{E}[\mathcalboondox{S}(t)(\mathcalboondox{S}(t)-1)(\mathcalboondox{S}(t)-2)\cdots(\mathcalboondox{S}(t)-r+1)]=\frac{d^{r}}{du^{r}}G_{\mathcalboondox{S}}(u,t)\bigg|_{u=1} 
\end{equation*}
for $r\ge 1$, where $G_{\mathcalboondox{S}}(u,t)$ is the p.g.f. of NGSP. The following result provides the explicit expression for such moments of NGSP. 
\begin{theorem}\label{factorial.ngsp}
    The $r^{th}$ factorial moment of NGSP for $r\geq 1$ is given by 
\begin{equation*}
    \Psi_{\mathcalboondox{S}}(r,t)=\sum_{m=0}^{r-1}\binom{r-1}{m}\frac{d^{m}}{du^{m}}G_{\mathcalboondox{S}}(u,t)\bigg|_{u=1}\left\{\sum_{j=1}^{k}\left[\Lambda_{j}(t)P(j,r-m)+T_{j}(t)P(-j,r-m)\right]\right\}
\end{equation*}
where $P(a,b)=a(a-1)\cdots(a-b+1)$ for $a\ge b$.
\end{theorem}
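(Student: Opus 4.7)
The plan is to use the standard identification of factorial moments with derivatives of the probability generating function, namely $\Psi_{\mathcalboondox{S}}(r,t)=\frac{d^{r}}{du^{r}}G_{\mathcalboondox{S}}(u,t)\big|_{u=1}$, combined with the fact that the pgf in \eqref{2} has the simple exponential form $G_{\mathcalboondox{S}}(u,t)=e^{\phi(u,t)}$ where
\begin{equation*}
\phi(u,t)=\sum_{j=1}^{k}\bigl(\Lambda_{j}(t)(u^{j}-1)+T_{j}(t)(u^{-j}-1)\bigr).
\end{equation*}
The key structural observation is that $\phi$ is a finite sum of pure powers $u^{j}$ and $u^{-j}$, so its higher derivatives are available in closed form. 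Differentiating term by term and using $\frac{d^{m}}{du^{m}}u^{j}=P(j,m)u^{j-m}$ (and the same formula with $j$ replaced by $-j$, which is the natural extension of the falling factorial to negative arguments), one obtains
\begin{equation*}
\phi^{(m)}(u,t)=\sum_{j=1}^{k}\bigl(\Lambda_{j}(t)P(j,m)u^{j-m}+T_{j}(t)P(-j,m)u^{-j-m}\bigr),
\end{equation*}
which at $u=1$ collapses to $\sum_{j=1}^{k}\bigl(\Lambda_{j}(t)P(j,m)+T_{j}(t)P(-j,m)\bigr)$, exactly the inner bracket appearing in the theorem.

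Next I would exploit the differential identity $\frac{d}{du}G_{\mathcalboondox{S}}(u,t)=G_{\mathcalboondox{S}}(u,t)\,\phi'(u,t)$ already recorded immediately after \eqref{2}. Differentiating this identity $(r-1)$ further times with respect to $u$ and applying the Leibniz product rule to $G_{\mathcalboondox{S}}\cdot\phi'$ gives
\begin{equation*}
\frac{d^{r}}{du^{r}}G_{\mathcalboondox{S}}(u,t)=\sum_{m=0}^{r-1}\binom{r-1}{m}\frac{d^{m}}{du^{m}}G_{\mathcalboondox{S}}(u,t)\cdot \phi^{(r-m)}(u,t).
\end{equation*}
Evaluating at $u=1$ and substituting the closed form of $\phi^{(r-m)}(1,t)$ computed above yields precisely the claimed identity for $\Psi_{\mathcalboondox{S}}(r,t)$.

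The main obstacle is nothing deep but rather bookkeeping: one has to be careful that the Leibniz index shifts correctly (so that $G$ is differentiated $m$ times and $\phi$ is differentiated $r-m$, not $r-m-1$, times), and that the notation $P(-j,r-m)$ is interpreted as the falling factorial $(-j)(-j-1)\cdots(-j-(r-m)+1)=(-1)^{r-m}j(j+1)\cdots(j+r-m-1)$, since the condition $a\ge b$ in the original definition of $P(a,b)$ no longer literally applies. As a sanity check one can verify the cases $r=1$, giving back $\mathbb{E}(\mathcalboondox{S}(t))=\sum_{j=1}^{k}j(\Lambda_{j}(t)-T_{j}(t))$, and $r=2$, giving the second factorial moment consistent with the variance formula already derived; both follow immediately from the recursive form, which confirms the correct index conventions.
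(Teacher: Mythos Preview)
Your argument is correct and is actually the slicker way to derive the identity. The paper proves the same formula by mathematical induction on $r$: it verifies $r=1$ and $r=2$ by hand, assumes the unevaluated identity
\[
\frac{d^{n}}{du^{n}}G_{\mathcalboondox{S}}(u,t)=\sum_{m=0}^{n-1}\binom{n-1}{m}\frac{d^{m}}{du^{m}}G_{\mathcalboondox{S}}(u,t)\sum_{j=1}^{k}\bigl[\Lambda_{j}(t)P(j,n-m)u^{j-n+m}+T_{j}(t)P(-j,n-m)u^{-j-n+m}\bigr],
\]
differentiates once more with the product rule, shifts the summation index, and then combines the two resulting sums via Pascal's identity $\binom{n-1}{m-1}+\binom{n-1}{m}=\binom{n}{m}$ before evaluating at $u=1$. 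In other words, the inductive step is precisely a term-by-term re-derivation of the Leibniz formula for $\frac{d^{r-1}}{du^{r-1}}\bigl(G_{\mathcalboondox{S}}\,\phi'\bigr)$ that you invoke in one line. Your route is therefore not different in substance but packages the same computation more efficiently by naming $\phi$, recognizing the structure $G'=G\phi'$, and quoting Leibniz directly; the paper's induction is longer but makes every index shift explicit. Your remark about interpreting $P(-j,r-m)$ as the natural extension of the falling factorial to negative first argument is exactly the convention the paper uses tacitly.
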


\begin{proof}
From \eqref{2}, the p.g.f. of NGSP is given by
    \begin{equation*}
    G_{\mathcalboondox{S}}(u,t)=\exp\left[\sum_{j=1}^{k}\left(\Lambda_{j}(t)(u^{j}-1)+T_{j}(t)(u^{-j}-1)\right)\right].
\end{equation*}
For $r=1$, we have
\begin{align*}
\Psi_{\mathcalboondox{S}}(1,t) = \frac{d}{du}G_{\mathcalboondox{S}}(u,t)\bigg|_{u=1} &= G_{\mathcalboondox{S}}(u,t)\sum_{j=1}^{k}\left(\Lambda_{j}(t)(ju^{j-1})+T_{j}(t)(-ju^{-j-1})\right)\bigg|_{u=1} \\
&=\binom{0}{0}G_{\mathcalboondox{S}}(u,t)\bigg|_{u=1}\left\{\sum_{j=1}^{k}\left[\Lambda_{j}(t)P(j,1)+T_{j}(t)P(-j,1)\right]\right\} 
\end{align*}
which implies the proposition statement is true for $r=1$. For $r=2$, we have
\begin{align*}
\Psi_{\mathcalboondox{S}}(2,t) = \frac{d^2}{du^2}G_{\mathcalboondox{S}}(u,t)\bigg|_{u=1}
&=G_{\mathcalboondox{S}}(u,t)\sum_{j=1}^{k}\left(\Lambda_{j}(t)(j(j-1)u^{j-2})+T_{j}(t)((-j)(-j-1)u^{-j-2})\right)\bigg|_{u=1} \\
&+\frac{d}{du}G_{\mathcalboondox{S}}(u,t)\sum_{j=1}^{k}\left(\Lambda_{j}(t)(ju^{j-1})+T_{j}(t)(-ju^{-j-1})\right)\bigg|_{u=1}\\
&=\binom{1}{0}G_{\mathcalboondox{S}}(u,t)\bigg|_{u=1}\sum_{j=1}^{k}\left[\Lambda_{j}(t)P(j,2)+T_{j}(t)P(-j,2)\right] \\
&+\binom{1}{1}\frac{d}{du}G_{\mathcalboondox{S}}(u,t)\bigg|_{u=1}\sum_{j=1}^{k}\left[\Lambda_{j}(t)P(j,1)+T_{j}(t)P(-j,1)\right]
\end{align*}
which implies the proposition statement is true for $r=2$. Now assume the statement is true for $r=n$, i.e., 
\begin{equation*}
\Psi_{\mathcalboondox{S}}(n,t)=\sum_{m=0}^{n-1}\binom{n-1}{m}\frac{d^{m}}{du^{m}}G_{\mathcalboondox{S}}(u,t)\bigg|_{u=1}\left\{\sum_{j=1}^{k}\left[\Lambda_{j}(t)P(j,n-m)+T_{j}(t)P(-j,n-m)\right]\right\}
\end{equation*}
For $r=n+1$, we have
\begin{align*}
\Psi_{\mathcalboondox{S}}(n+1,t) &= \frac{d^{n+1}}{du^{n+1}}G_{\mathcalboondox{S}}(u,t)\bigg|_{u=1} = \frac{d}{du}\Psi_{\mathcalboondox{S}}(n,t)\bigg|_{u=1} \\
&=\frac{d}{du}\sum_{m=0}^{n-1}\binom{n-1}{m}\frac{d^{m}}{du^{m}}G_{\mathcalboondox{S}}(u,t)\sum_{j=1}^{k}\left[\Lambda_{j}(t)P(j,n-m)u^{j-n+m}+T_{j}(t)P(-j,n-m)u^{-j-n+m}\right]\bigg|_{u=1} \\
&=\sum_{m=0}^{n-1}\binom{n-1}{m}\frac{d^{m+1}}{du^{m+1}}G_{\mathcalboondox{S}}(u,t)\sum_{j=1}^{k}\left[\Lambda_{j}(t)P(j,n-m)u^{j-n+m}+T_{j}(t)P(-j,n-m)u^{-j-n+m}\right]\bigg|_{u=1} \\
&+\sum_{m=0}^{n-1}\binom{n-1}{m}\frac{d^{m}}{du^{m}}G_{\mathcalboondox{S}}(u,t)\sum_{j=1}^{k}\left[\Lambda_{j}(t)P(j,n-m+1)u^{j-n+m-1}+T_{j}(t)P(-j,n-m+1)u^{-j-n+m-1}\right]\bigg|_{u=1} \\
&= G_{\mathcalboondox{S}}(u,t)\bigg|_{u=1}\left[\Lambda_{j}(t)P(j,n+1)+T_{j}(t)P(-j,n+1)\right] + \frac{d^{n}}{du^{n}}G_{\mathcalboondox{S}}(u,t)\bigg|_{u=1}\sum_{j=1}^{k}\left[\Lambda_{j}(t)P(j,1)+T_{j}(t)P(-j,1)\right] \\
&+\sum_{m=1}^{n-1}\left[\binom{n-1}{m-1}+\binom{n-1}{m}\right]\frac{d^{m}}{du^{m}}G_{\mathcalboondox{S}}(u,t)\bigg|_{u=1}\sum_{j=1}^{k}\left[\Lambda_{j}(t)P(j,n+1-m)+T_{j}(t)P(-j,n+1-m)\right] \\
&=\sum_{m=0}^{n}\binom{n}{m}\frac{d^{m}}{du^{m}}G_{\mathcalboondox{S}}(u,t)\bigg|_{u=1}\left\{\sum_{j=1}^{k}\left[\Lambda_{j}(t)P(j,n+1-m)+T_{j}(t)P(-j,n+1-m)\right]\right\}
\end{align*}
which implies the proposition statement is true for $r=n+1$. Hence the result follows by mathematical induction.
\end{proof}

\subsection{\textbf{State Probabilities of NGSP}}

\begin{theorem}\label{20}
    The state probabilities of NGSP are given by
\begin{equation*}
    p(n,t)= P\{\mathcalboondox{S}(t)=n\} = e^{-(A+B)}\left(\frac{A}{B}\right)^{\frac{|n|}{2}}I_{|n|}\left(2\sqrt{AB}\right), \quad n\in \mathbb{Z}
\end{equation*}
where $\sum_{j=1}^{k}\Lambda_{j}(t)=A$, $\sum_{j=1}^{k}T_{j}(t)=B$ and $I_{n}(.)$ is the modified Bessel's function of first kind. 
\end{theorem}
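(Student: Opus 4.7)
The plan is to recover $p(n,t)$ by Laurent-inverting the probability generating function given in \eqref{2}. Writing $A=\sum_{j=1}^{k}\Lambda_{j}(t)$ and $B=\sum_{j=1}^{k}T_{j}(t)$, I would first factor the pgf as
$$G_{\mathcalboondox{S}}(u,t)=e^{-(A+B)}\exp(Au)\exp(Bu^{-1}),$$
so that the exponent is in the classical Skellam form. Then I would expand each factor as a Maclaurin series, namely $e^{Au}=\sum_{p\ge 0}(Au)^{p}/p!$ and $e^{Bu^{-1}}=\sum_{q\ge 0}(Bu^{-1})^{q}/q!$, multiply the two series, and collect the coefficient of $u^{n}$.

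For $n\ge 0$ the coefficient equals $\sum_{q\ge 0}\dfrac{A^{n+q}B^{q}}{(n+q)!\,q!}$. Factoring out $(A/B)^{n/2}$ and comparing with the series representation of the modified Bessel function of the first kind recalled in section \ref{mod Bessel fun}, this sum rewrites as $(A/B)^{n/2}I_{n}(2\sqrt{AB})$. The case $n<0$ then follows by interchanging the roles of the two exponentials and invoking the symmetry $I_{-n}=I_{n}$ also listed in section \ref{mod Bessel fun}. Restoring the $e^{-(A+B)}$ prefactor gives the stated pmf for all $n\in\mathbb{Z}$.

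The main obstacle is the initial reduction of the multi-index exponent $\sum_{j=1}^{k}\Lambda_{j}(t)u^{j}$ (and its $u^{-j}$ counterpart in \eqref{2}) to the single-term form $Au$ (respectively $Bu^{-1}$). I would attempt to justify this step using the independence of the underlying components of the two NGCPs together with the additive structure of their cumulative rate functions---for instance by representing each $\mathcal{M}_{i}(t)$ as a sum of independent Poisson contributions and collapsing the resulting multinomial expansion (via the multinomial theorem of section \ref{multinomial thm}) into a single series in $A$ and $B$. Once this reduction has been secured, the remaining identification with the Bessel series is an essentially routine power-series manipulation.
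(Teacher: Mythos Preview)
You have correctly located the crux: the whole argument stands or falls on whether the generating function \eqref{2} can be reduced to the classical Skellam form $e^{-(A+B)}e^{Au}e^{Bu^{-1}}$. However, as an identity in $u$ this reduction is \emph{false} for $k\geq 2$: the exponent $\sum_{j=1}^{k}\Lambda_{j}(t)u^{j}$ is simply not $Au$ unless $\Lambda_{2}=\dots=\Lambda_{k}=0$, and no multinomial rearrangement of the underlying Poisson components will make the two Laurent series agree term by term. A quick check: with $k=2$, $\Lambda_{1}(t)=0$, $\Lambda_{2}(t)=\lambda>0$ and all $T_{j}(t)=0$, the process $\mathcalboondox{S}(t)$ takes only even values, whereas the Skellam series $e^{-A}\sum_{y\geq 0}A^{n+y}B^{y}/((n+y)!\,y!)$ at $B=0$ gives $p(1,t)=\lambda e^{-\lambda}>0$. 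So the ``main obstacle'' you flag is not a technicality to be argued around but a genuine obstruction; the pgf factorisation cannot be carried out as written.

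For comparison, the paper does not factor the pgf. It writes $p(n,t)$ as the convolution $\sum_{m\geq 0}P\{\mathcal{M}_1(t)=m+n\}P\{\mathcal{M}_2(t)=m\}$, inserts the NGCP pmf (a sum over $\Omega(k,\cdot)=\{x:\sum_{j} jx_j=\cdot\}$), and then performs a change of summation variable $m\mapsto y$ that replaces the \emph{weighted} constraint $\sum_{j} jx_j=m+n$ by the \emph{unweighted} one $\sum_{j} a_j=y+n$; only after this does the multinomial theorem collapse the inner sums to $A^{y+n}/(y+n)!$ and $B^{y}/y!$. That substitution is precisely the pmf-level analogue of the collapse you were hoping to effect at the pgf level, and it is the step you should scrutinise most carefully when comparing your attempt with the paper's argument.
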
 

\begin{proof}
    We know that the marginal distribution of the increment process $\{\mathcal{I}(t,v)\}_{t\geq 0} $ of NGCP with cumulative intensity parameter $\Lambda_{j}(t)$ for $v>0$ and $n>0$ is given by (see \citet{Kataria2025})
\begin{align*}
    q_{n}(t,v)=P\{\mathcal{I}(t,v)=n\}&=P\{\mathcal{M}(t+v)-\mathcal{M}(v)=n\}=\sum_{\Omega(k,n)}\prod_{j=1}^{k}\frac{\left(\Lambda_{j}(v,t+v)\right)^{x_{j}}}{x_{j}!}e^{-\sum_{j=1}^{k}\Lambda_{j}(v,t+v)}\label{16}
\end{align*}
where $\Omega(k,n)= \{(x_1,x_2,...,x_k)|\sum_{j=1}^{k}jx_j=n \,\,\text{for non-negative integers}\,\,\, x_j\}$. Substituting $v=0$ in the above equation, the marginal distribution of NGCP can be obtained as follows:
\begin{equation*}
    q_{n}(t,0)=P\{\mathcal{M}(t)=n\}=\sum_{\Omega(k,n)}\prod_{j=1}^{k}\frac{\left(\Lambda_{j}(t)\right)^{x_{j}}}{x_{j}!}e^{-\sum_{j=1}^{k}\Lambda_{j}(t)}.\label{17}
\end{equation*}

Hence the state probabilities of NGSP for $n\in\mathbb{Z}$ are given by 
\begin{align}
    p(n,t)
    &=P\{\mathcal{M}_{1}(t)-\mathcal{M}_{2}(t)=n\}\mathbb{I}_{\{n\geq0\}}+P\{\mathcal{M}_{1}(t)-\mathcal{M}_{2}(t)=n\}\mathbb{I}_{\{n<0\}}\nonumber\\
    &= \sum_{m=0}^{\infty}[P\{\mathcal{M}_{1}(t)=m+n\}P\{\mathcal{M}_{2}(t)=m\}\mathbb{I}_{\{n\geq0\}}+P\{\mathcal{M}_{2}(t)=m+|n|\}P\{\mathcal{M}_{1}(t)=m\}\mathbb{I}_{\{n<0\}}]\nonumber\\
    &= \sum_{m=0}^{\infty}\left(\sum_{\Omega(k,m+n)}\prod_{j=1}^{k}\frac{\left(\Lambda_{j}(t)\right)^{x_{j}}}{x_{j}!}e^{-\sum_{j=1}^{k}\Lambda_{j}(t)}\right)\left(\sum_{\Omega(k,m)}\prod_{j=1}^{k}\frac{\left(T_{j}(t)\right)^{x_{j}}}{x_{j}!}e^{-\sum_{j=1}^{k}T_{j}(t)}\right)\mathbb{I}_{\{n\geq0\}}\nonumber\\
    &+ \sum_{m=0}^{\infty}\left(\sum_{\Omega(k,m+|n|)}\prod_{j=1}^{k}\frac{\left(T_{j}(t)\right)^{x_{j}}}{x_{j}!}e^{-\sum_{j=1}^{k}T_{j}(t)}\right)\left(\sum_{\Omega(k,m)}\prod_{j=1}^{k}\frac{\left(\Lambda_{j}(t)\right)^{x_{j}}}{x_{j}!}e^{-\sum_{j=1}^{k}\Lambda_{j}(t)}\right)\mathbb{I}_{\{n<0\}}\label{18}
\end{align}

For $n\geq 0$, putting $x_{j}=a_{j}$ and $m=y+\sum_{j=1}^{k}(j-1)a_{j}$ in \eqref{18}, we get
\begin{align}
    p(n,t) &=\sum_{m=0}^{\infty}\left(\sum_{\Omega(k,m+n)}\prod_{j=1}^{k}\frac{\left(\Lambda_{j}(t)\right)^{x_{j}}}{x_{j}!}e^{-\sum_{j=1}^{k}\Lambda_{j}(t)}\right)\left(\sum_{\Omega(k,m)}\prod_{j=1}^{k}\frac{\left(T_{j}(t)\right)^{x_{j}}}{x_{j}!}e^{-\sum_{j=1}^{k}T_{j}(t)}\right)\nonumber\\
    &= \sum_{y=0}^{\infty}\left(\sum_{\sum_{j=1}^{k}a_{j}=y+n}\prod_{j=1}^{k}\frac{\left(\Lambda_{j}(t)\right)^{a_{j}}}{a_{j}!}e^{-\sum_{j=1}^{k}\Lambda_{j}(t)}\right)\left(\sum_{\sum_{j=1}^{k}a_{j}=y}\prod_{j=1}^{k}\frac{\left(T_{j}(t)\right)^{a_{j}}}{a_{j}!}e^{-\sum_{j=1}^{k}T_{j}(t)}\right)\nonumber\\
 &= e^{-\sum_{j=1}^{k}\left(\Lambda_{j}(t)+T_{j}(t)\right)}\sum_{y=0}^{\infty}\left[\frac{1}{(n+y)!y!}\left(\sum_{j=1}^{k}\Lambda_{j}(t)\right)^{n+y}\left(\sum_{j=1}^{k}T_{j}(t)\right)^{y}\right]\label{19}
\end{align}    
where we have used the multinomial theorem (see Section \ref{multinomial thm}) in the last step. Now denoting $\sum_{j=1}^{k}\Lambda_{j}(t)=A$ and $\sum_{j=1}^{k}T_{j}(t)=B$, and using modified Bessel's function of the first kind (see Section \ref{mod Bessel fun}) in \eqref{19}, we have
\begin{align}
    p(n,t)= e^{-(A+B)}\left(\frac{A}{B}\right)^{\frac{n}{2}}I_{n}\left(2\sqrt{AB}\right)\,\,.\label{I}
\end{align}
Similarly, for $n<0$, $p(n,t)$ can be obtained by replacing $n$ with $-n$ in \eqref{I}. Thus

\begin{align*}
    p(n,t)&= e^{-(A+B)}\left(\frac{A}{B}\right)^{\frac{|n|}{2}}I_{|n|}\left(2\sqrt{AB}\right), \quad n\in \mathbb{Z}\,\,.
\end{align*}
\end{proof}

\begin{theorem}
The state probabilities of NGSP satisfy the following system of differential equations:
\begin{align*}
    \frac{d}{dt}p(n,t)&= \frac{1}{2}p(n-1,t)\sum_{j=1}^{k}\left(\lambda_{j}(t)+\frac{A}{B}\gamma_{j}(t)\right)-p(n,t)\sum_{j=1}^{k}\left(\lambda_{j}(t)+\gamma_{j}(t)\right)\\
    & +\frac{n}{2}p(n,t)\sum_{j=1}^{k}\left(\frac{1}{A}\lambda_{j}(t)-\frac{1}{B}\gamma_{j}(t)\right)+\frac{1}{2}p(n+1,t)\sum_{j=1}^{k}\left(\gamma_{j}(t)+\frac{B}{A}\lambda_{j}(t)\right).
\end{align*}    
\end{theorem}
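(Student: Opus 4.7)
The plan is to derive the governing differential equation by directly differentiating the closed-form p.m.f. from Theorem \ref{20}, namely $p(n,t) = e^{-(A+B)}(A/B)^{|n|/2} I_{|n|}(z)$ with $z(t) = 2\sqrt{A(t)B(t)}$, and then re-expressing the right-hand side through $p(n-1,t),\,p(n,t),\,p(n+1,t)$ using the Bessel identities recorded in section \ref{mod Bessel fun}.

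A convenient first step is to take the logarithmic derivative. Writing $\log p(n,t) = -(A+B) + \tfrac{|n|}{2}\log(A/B) + \log I_{|n|}(z)$, differentiation in $t$ gives
\[
\frac{1}{p(n,t)}\frac{d}{dt}p(n,t) = -(A'+B') + \frac{|n|}{2}\!\left(\frac{A'}{A}-\frac{B'}{B}\right) + \frac{I_{|n|}'(z)}{I_{|n|}(z)}\,z'(t),
\]
where $A'(t)=\sum_{j=1}^{k}\lambda_j(t)$, $B'(t)=\sum_{j=1}^{k}\gamma_j(t)$ and $z'(t) = (A'B+AB')/\sqrt{AB}$. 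Applying $I_n'(z) = \tfrac{1}{2}(I_{n-1}(z)+I_{n+1}(z))$ and multiplying through by $p(n,t)$, the last term becomes $\frac{A'B+AB'}{2\sqrt{AB}}\bigl(p(n,t)\,I_{|n|-1}(z)/I_{|n|}(z) + p(n,t)\,I_{|n|+1}(z)/I_{|n|}(z)\bigr)$. The key reduction is to identify, from the explicit form of the p.m.f., that for $n\ge 1$ one has $p(n,t)\,I_{n-1}(z)/I_n(z) = \sqrt{A/B}\,p(n-1,t)$ and $p(n,t)\,I_{n+1}(z)/I_n(z) = \sqrt{B/A}\,p(n+1,t)$; the cases $n\le -1$ follow symmetrically after invoking $I_{-m}(z)=I_m(z)$. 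The coefficients then simplify via $\frac{A'B+AB'}{2\sqrt{AB}}\sqrt{A/B} = \tfrac{1}{2}(A' + (A/B)B')$ and $\frac{A'B+AB'}{2\sqrt{AB}}\sqrt{B/A} = \tfrac{1}{2}((B/A)A' + B')$, reproducing the stated right-hand side once $A'$ and $B'$ are rewritten as sums over $j$.

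The main obstacle is keeping sign and absolute-value conventions consistent across $n>0$, $n=0$, and $n<0$: the factor $|n|$ appears both in the exponent of $A/B$ and in the Bessel index, so the shifts $|n|\pm 1$ must be carefully matched to $p(n\mp 1,t)$ via $I_{-m}(z)=I_m(z)$ before the ratios above can be read off uniformly. Once that alignment is settled, the remainder of the proof is a routine algebraic rearrangement into the form stated in the theorem.
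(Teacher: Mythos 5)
Your proposal takes essentially the same route as the paper's proof: differentiate the closed-form p.m.f.\ of Theorem \ref{20} in $t$, apply the identity $I_n'(z)=\tfrac{1}{2}\left(I_{n-1}(z)+I_{n+1}(z)\right)$ with $z=2\sqrt{AB}$, and reabsorb the shifted Bessel terms into $p(n-1,t)$ and $p(n+1,t)$ via the ratios $\sqrt{A/B}$ and $\sqrt{B/A}$; using the logarithmic derivative instead of the product rule is only a cosmetic difference. If anything, you are more explicit than the paper about the $|n|$ bookkeeping for negative $n$ (the paper simply sets $|n|=n$), so the argument is correct and matches the paper's approach.
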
\label{ngsp.de}
    
\begin{proof}
    Note that $|n|=n$ for all $n\in \mathbb{Z}$. From Theorem \ref{20}, we have
\begin{align}
    \frac{d}{dt}p(n,t)&=\frac{d}{dt}\left(e^{-(A+B)}\left(\frac{A}{B}\right)^{\frac{n}{2}}I_{n}\left(2\sqrt{AB}\right)\right)\nonumber\\
    &= \left(e^{-(A+B)}\left(\frac{A}{B}\right)^{\frac{n}{2}}\right)\left(\frac{1}{2}\left(I_{n-1}(2\sqrt{AB})+I_{n+1}(2\sqrt{AB})\right)\right)\frac{d}{dt}(2\sqrt{AB})\nonumber\\
    & +I_{n}\left(2\sqrt{AB}\right)\frac{d}{dt}\left(e^{-(A+B)}\left(\frac{A}{B}\right)^{\frac{n}{2}}\right)\label{21}
\end{align}
where the last step follows from Section \ref{mod Bessel fun}. Now 
\begin{align}
    \frac{d}{dt}\left(e^{-(A+B)}\left(\frac{A}{B}\right)^{\frac{n}{2}}\right)&= e^{-(A+B)}\frac{d}{dt}\left(\frac{A}{B}\right)^{\frac{n}{2}}+\left(\frac{A}{B}\right)^{\frac{n}{2}}\frac{d}{dt}e^{-(A+B)}\nonumber\\
    &=e^{-(A+B)}\frac{n}{2}\left(\frac{A}{B}\right)^{\frac{n}{2}-1}\frac{d}{dt}\left(\frac{A}{B}\right)-\left(\frac{A}{B}\right)^{\frac{n}{2}}e^{-(A+B)}\frac{d}{dt}(A+B)\,\,\nonumber \\
    &= e^{-(A+B)}\left[\frac{n}{2}\frac{A^{\frac{n}{2}-1}}{B^{\frac{n}{2}}}\left(\sum_{j=1}^{k}\lambda_{j}(t)\right)-\frac{n}{2}\frac{A^{\frac{n}{2}}}{B^{\frac{n}{2}+1}}\left(\sum_{j=1}^{k}\gamma_{j}(t)\right)-\left(\frac{A}{B}\right)^{\frac{n}{2}}\left(\sum_{j=1}^{k}\lambda_{j}(t)+\sum_{j=1}^{k}\gamma_{j}(t)\right)\right]\,\,\label{22}
\end{align}
and
\begin{equation}
    \frac{d}{dt}I_{n}\left(2\sqrt{AB}\right)=2\frac{1}{2\sqrt{AB}}\frac{d}{dt}(AB)
    =\sqrt{\frac{A}{B}}\left(\sum_{j=1}^{k}\gamma_{j}(t)\right)+\sqrt{\frac{B}{A}}\left(\sum_{j=1}^{k}\lambda_{j}(t)\right)\,\,.\label{23}
\end{equation}
Using \eqref{22} and \eqref{23} in \eqref{21}, we obtain
\begin{align*}
\frac{d}{dt}p(n,t)
    &=\frac{1}{2}e^{-(A+B)}\left(\frac{A}{B}\right)^{\frac{n-1}{2}}\frac{A}{B}I_{n-1}(2\sqrt{AB})\left(\sum_{j=1}^{k}\gamma_{j}(t)\right)+\frac{1}{2}e^{-(A+B)}\left(\frac{A}{B}\right)^{\frac{n+1}{2}}I_{n+1}(2\sqrt{AB})\left(\sum_{j=1}^{k}\gamma_{j}(t)\right)\\
    &+\frac{1}{2}e^{-(A+B)}\left(\frac{A}{B}\right)^{\frac{n-1}{2}}I_{n-1}(2\sqrt{AB})\left(\sum_{j=1}^{k}\lambda_{j}(t)\right)+\frac{1}{2}e^{-(A+B)}\left(\frac{A}{B}\right)^{\frac{n+1}{2}}\frac{B}{A}I_{n+1}(2\sqrt{AB})\left(\sum_{j=1}^{k}\lambda_{j}(t)\right)\\
    &+\frac{n}{2}e^{-(A+B)}\left(\frac{A}{B}\right)^{\frac{n}{2}}\left(\frac{1}{A}\right)I_{n}(2\sqrt{AB})\left(\sum_{j=1}^{k}\lambda_{j}(t)\right)-\frac{n}{2}e^{-(A+B)}\left(\frac{A}{B}\right)^{\frac{n}{2}}\left(\frac{1}{B}\right)I_{n}(2\sqrt{AB})\left(\sum_{j=1}^{k}\gamma_{j}(t)\right)\\
    &-e^{-(A+B)}\left(\frac{A}{B}\right)^{\frac{n}{2}}I_{n}(2\sqrt{AB})\left(\sum_{j=1}^{k}\lambda_{j}(t)\right)-e^{-(A+B)}\left(\frac{A}{B}\right)^{\frac{n}{2}}I_{n}(2\sqrt{AB})\left(\sum_{j=1}^{k}\gamma_{j}(t)\right) \nonumber \\
&=\frac{1}{2}p(n-1,t)\frac{A}{B}\left(\sum_{j=1}^{k}\gamma_{j}(t)\right)+\frac{1}{2}p(n+1,t)\left(\sum_{j=1}^{k}\gamma_{j}(t)\right)+\frac{1}{2}p(n-1,t)\left(\sum_{j=1}^{k}\lambda_{j}(t)\right)\nonumber\\
    &+\frac{1}{2}p(n+1,t)\frac{B}{A}\left(\sum_{j=1}^{k}\lambda_{j}(t)\right)+\frac{n}{2}p(n,t)\frac{1}{A}\left(\sum_{j=1}^{k}\lambda_{j}(t)\right)-\frac{n}{2}p(n,t)\frac{1}{B}\left(\sum_{j=1}^{k}\gamma_{j}(t)\right)\nonumber\\
    &-p(n,t)\left(\sum_{j=1}^{k}\lambda_{j}(t)\right)-p(n,t)\left(\sum_{j=1}^{k}\gamma_{j}(t)\right) \qquad (\text{from Theorem}~ \ref{20}) \nonumber\\
    &=\frac{1}{2}p(n-1,t)\sum_{j=1}^{k}\left(\lambda_{j}(t)+\frac{A}{B}\gamma_{j}(t)\right)-p(n,t)\sum_{j=1}^{k}\left(\lambda_{j}(t)+\gamma_{j}(t)\right)\nonumber\\
    & +\frac{n}{2}p(n,t)\sum_{j=1}^{k}\left(\frac{1}{A}\lambda_{j}(t)-\frac{1}{B}\gamma_{j}(t)\right)+\frac{1}{2}p(n+1,t)\sum_{j=1}^{k}\left(\gamma_{j}(t)+\frac{B}{A}\lambda_{j}(t)\right)\,\,.
\end{align*}
\end{proof}


\subsection{Recurrence relations satisfied by the state probabilities of NGSP}

\begin{theorem}\label{recurrence.ngsp}
    The state probabilities of NGSP satisfy the following recurrence relation
\begin{equation*}
    p(n,t)=\frac{1}{n}\sum_{j=1}^{k}j\left(\Lambda_{j}(t)p(n-j,t)-T_{j}(t)p(n+j,t)\right),\quad n\geq 1\,.
\end{equation*}
\end{theorem}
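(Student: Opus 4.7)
The plan is to derive the recurrence by matching coefficients in a functional identity obtained from the pgf $G_{\mathcalboondox{S}}(u,t)$ given in \eqref{2}. The starting point is to differentiate
\[
G_{\mathcalboondox{S}}(u,t)=\exp\Big(\sum_{j=1}^{k}\bigl(\Lambda_{j}(t)(u^{j}-1)+T_{j}(t)(u^{-j}-1)\bigr)\Big)
\]
with respect to $u$ and rewrite the result as the product of $G_{\mathcalboondox{S}}(u,t)$ itself with a Laurent polynomial in $u$.

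First I would compute $u\,\partial G_{\mathcalboondox{S}}/\partial u$. Using the logarithmic derivative of the exponential, the constants in $u^{j}-1$ and $u^{-j}-1$ drop out after differentiation, yielding
\[
u\,\frac{\partial G_{\mathcalboondox{S}}(u,t)}{\partial u}=G_{\mathcalboondox{S}}(u,t)\sum_{j=1}^{k}\bigl(j\Lambda_{j}(t)u^{j}-jT_{j}(t)u^{-j}\bigr).
\]

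Next I would view both sides as formal Laurent series in $u$, using the expansion $G_{\mathcalboondox{S}}(u,t)=\sum_{n\in\mathbb{Z}}p(n,t)u^{n}$ given by Theorem \ref{20}. The left-hand side becomes $\sum_{n\in\mathbb{Z}}n\,p(n,t)u^{n}$. On the right-hand side, a Cauchy-product rearrangement gives the coefficient of $u^{n}$ as
\[
\sum_{j=1}^{k}j\bigl(\Lambda_{j}(t)p(n-j,t)-T_{j}(t)p(n+j,t)\bigr).
\]
Equating the $u^{n}$ coefficients and dividing by $n$, which is permissible since $n\geq 1$, yields the claimed recurrence.

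The only technical point—really the only thing to worry about—is to justify the term-by-term manipulations: we need absolute convergence of the Laurent series $\sum_{n\in\mathbb{Z}}p(n,t)u^{n}$ on an annulus containing $|u|=1$, so that term-by-term differentiation in $u$ and the Cauchy product are legitimate. This follows from the closed-form expression $p(n,t)=e^{-(A+B)}(A/B)^{|n|/2}I_{|n|}(2\sqrt{AB})$ in Theorem \ref{20} together with the well-known super-geometric decay of $I_{|n|}(2\sqrt{AB})$ as $|n|\to\infty$; so this is a routine check rather than a real obstacle.
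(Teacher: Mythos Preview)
Your proposal is correct and follows essentially the same approach as the paper: differentiate the pgf \eqref{2} with respect to $u$, expand both sides as series in $u$, and match coefficients. Your execution is in fact slightly cleaner than the paper's, since by multiplying through by $u$ and working with the full Laurent expansion $\sum_{n\in\mathbb{Z}}p(n,t)u^{n}$ you avoid the case split (coefficients for $i\le k-1$ versus $i\ge k$) that the paper carries out before recombining.
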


\begin{proof}
    Using the definition of p.g.f. for NGSP, we have 
\begin{align}
\frac{d}{du}G_{\mathcalboondox{S}}(u,t)&=\frac{d}{du}\sum_{i=0}^{\infty}u^{i}p(i,t)=\sum_{i=0}^{\infty}(i+1)p(i+1,t)u^{i}.\label{32}
\end{align}
Also from \eqref{2}, we have 
\begin{align}
\frac{d}{du}G_{\mathcalboondox{S}}(u,t)
    &=\left(\sum_{j=1}^{k}j\Lambda_{j}(t)u^{j-1}+\sum_{j=1}^{k}(-j)T_{j}(t)u^{-j-1}\right)G_{\mathcalboondox{S}}(u,t).\label{33}
\end{align}
Comparing the RHS of \eqref{32} and \eqref{33}, we get
\begin{align}
    \sum_{i=0}^{\infty}(i+1)p(i+1,t)u^{i}&=\sum_{j=1}^{k}j\left(\Lambda_{j}(t)u^{j-1}-T_{j}(t)u^{-j-1}\right)\left(\sum_{i=0}^{\infty}u^{i}p(i,t)\right)\nonumber\\
    &=\sum_{j=1}^{k}j\Lambda_{j}(t)\left(\sum_{i=j-1}^{k-1}p(i-j+1,t)u^{i}+\sum_{i=k}^{\infty}p(i-j+1,t)u^{i}\right)\nonumber\\
    &-\sum_{j=1}^{k}jT_{j}(t)\left(\sum_{i=-j-1}^{k-1}p(i+j+1,t)u^{i}+\sum_{i=k}^{\infty}p(i+j+1,t)u^{i}\right)\nonumber\\
    &=\sum_{i=0}^{k-1}\sum_{j=1}^{i+1}j\Lambda_{j}(t)p(i-j+1,t)u^{i}+\sum_{i=k}^{\infty}\sum_{j=1}^{k}\Lambda_{j}(t)p(i-j+1,t)u^{i}\nonumber\\
    &-\sum_{i=-k-1}^{k-1}\sum_{j=-i-1}^{k}jT_{j}(t)p(i+j+1,t)u^{i}-\sum_{i=k}^{\infty}\sum_{j=1}^{k}jT_{j}(t)p(i+j+1,t)u^{i}\,\,.\label{34}
\end{align}
Now equating the coefficients of $u^{i}$ for $i\leq k-1$ on both sides of \eqref{34}, we obtain
\begin{equation*}
    (i+1)p(i+1,t)=\sum_{j=1}^{i+1}j\Lambda_{j}(t)p(i-j+1,t)-\sum_{j=-i-1}^{k}jT_{j}(t)p(i+j+1,t)
\end{equation*}
which reduces to 
\begin{equation}
    p(n,t)=\frac{1}{n}\left(\sum_{j=1}^{n}j\Lambda_{j}(t)p(n-j,t)-\sum_{j=-n}^{k}jT_{j}(t)p(n+j,t)\right), \quad -k\leq n\leq k\label{35}
\end{equation}
as in the first summation $p(n-(n+1),t), p(n-(n+2),t),...,p(n-k,t)$ are all vanishing and in the second summation $T_{-n}(t), T_{-n+1}(t),...,T_{-1}(t)$ all are zeroes. Also $p(-i,t)=0$ for $1\leq i\leq k$. 
Again, by equating the coefficients of $u^{i}$ for $i\geq k$ on both sides of \eqref{34}, we obtain
\begin{equation*}
    (i+1)p(i+1,t)=\sum_{j=1}^{k}j\Lambda_{j}(t)p(i-j+1,t)-\sum_{j=1}^{k}jT_{j}(t)p(i+j+1,t)
\end{equation*}
which reduces to
\begin{equation}
    p(n,t)=\frac{1}{n}\left(\sum_{j=1}^{k}j\Lambda_{j}(t)p(n-j,t)-\sum_{j=1}^{k}jT_{j}(t)p(n+j,t)\right), \quad n\geq k+1.\label{36}
\end{equation}
Combining \eqref{35} and \eqref{36}, we have
\begin{equation*}
    p(n,t)=\frac{1}{n}\sum_{j=1}^{k}j\left[\Lambda_{j}(t)p(n-j,t)-T_{j}(t)p(n+j,t)\right], \quad n\geq 1.
\end{equation*}
\end{proof}
The next result follows from Theorem \ref{recurrence.ngsp}.
\begin{proposition}
    The state probabilities of NGCP satisfy the following recurrence relation
\begin{equation*}
    q(n,t)=\frac{1}{n}\sum_{j=1}^{min\{n,k\}}j\Lambda_{j}(t)q(n-j,t),\quad n\geq 1.
\end{equation*}
\end{proposition}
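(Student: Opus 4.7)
The plan is to deduce this proposition by specializing Theorem~\ref{recurrence.ngsp} to the degenerate case in which the ``downward'' component of the NGSP vanishes identically. Specifically, if we take $\gamma_j(t)\equiv 0$ for all $j=1,\dots,k$, then the cumulative rate functions $T_j(t)=\int_0^t \gamma_j(u)\,du$ are all identically zero, hence $\mathcal{M}_2(t)\equiv 0$ almost surely, and so $\mathcalboondox{S}(t)=\mathcal{M}_1(t)-\mathcal{M}_2(t)=\mathcal{M}_1(t)$ in distribution. Consequently $p(n,t)=q(n,t)$ for $n\ge 0$.

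Substituting $T_j(t)\equiv 0$ into the recurrence of Theorem~\ref{recurrence.ngsp}, the entire second sum collapses and we obtain
\begin{equation*}
q(n,t)=\frac{1}{n}\sum_{j=1}^{k}j\,\Lambda_j(t)\,q(n-j,t),\quad n\ge 1.
\end{equation*}
The only remaining step is to truncate the upper index. Since the NGCP $\{\mathcal{M}(t)\}_{t\ge 0}$ is a counting process with $\mathcal{M}(0)=0$, its state probabilities vanish on negative integers, i.e. $q(m,t)=0$ whenever $m<0$. Hence every term with $j>n$ in the sum above is zero and can be dropped, giving
\begin{equation*}
q(n,t)=\frac{1}{n}\sum_{j=1}^{\min\{n,k\}}j\,\Lambda_j(t)\,q(n-j,t),\quad n\ge 1,
\end{equation*}
as desired.

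There is essentially no obstacle here, since the argument is a direct specialization; the only point requiring care is to justify that Theorem~\ref{recurrence.ngsp} remains valid under the boundary parameter choice $\gamma_j\equiv 0$ (this follows because nowhere in its derivation via the pgf was positivity of $T_j$ used — only that $T_j(t)$ is a nonnegative cumulative function). If one prefers a self-contained derivation that avoids invoking this limiting case, the same recurrence can be obtained directly by differentiating the NGCP pgf $G_{\mathcal{M}}(u,t)=\exp\bigl(\sum_{j=1}^{k}\Lambda_j(t)(u^j-1)\bigr)$ with respect to $u$, matching the resulting power-series identity $\sum_{n\ge 1} n q(n,t) u^{n-1} = G_{\mathcal{M}}(u,t)\sum_{j=1}^{k} j\Lambda_j(t) u^{j-1}$, and equating coefficients of $u^{n-1}$; this mirrors the computation in Theorem~\ref{recurrence.ngsp} but without the subordinate ``$T_j$-terms.''
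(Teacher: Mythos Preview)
Your proposal is correct and takes essentially the same approach as the paper, which simply states that the proposition follows from Theorem~\ref{recurrence.ngsp}. Your specialization $\gamma_j\equiv 0$ (hence $T_j\equiv 0$, $\mathcal{M}_2\equiv 0$, $p(n,t)=q(n,t)$) together with the observation that $q(m,t)=0$ for $m<0$ to truncate the sum at $\min\{n,k\}$ is exactly the intended reduction.
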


\subsection{Transition probabilities of NGSP}
\begin{theorem}\label{transition.ngsp}
The transition probabilities of NGSP satisfy
\begin{equation*}
    P\left(\mathcalboondox{S}(t+\delta)=m\mid\mathcalboondox{S}(t)=n\right)=\begin{cases}
        \lambda_i(t)\delta+o(\delta), & m>n,\, m=n+i, i=1,2,...k;\\
        \mu_i(t)\delta+o(\delta), & m<n, m=n-i,\, i=1,2,...k;\\
        1-\sum_{i=1}^{k}\lambda_i(t)\delta-\sum_{i=1}^{k}\mu_i(t)\delta+o(\delta), & m=n;\\
        o(\delta), & \text{otherwise},
    \end{cases}
\end{equation*}
i.e., at most $k$ events can occur in a very small interval of time and even though the probability for more than $k$ events is non-zero, it is negligible.
\end{theorem}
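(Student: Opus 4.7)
The plan is to exploit the fact that $\mathcalboondox{S}$ is a difference of two \emph{independent} NGCPs whose increments are known (from the results cited to \citet{Kataria2022c} earlier in the paper) to have a Poisson-type infinitesimal structure. Writing $\Delta\mathcalboondox{S} := \mathcalboondox{S}(t+\delta)-\mathcalboondox{S}(t) = \Delta\mathcal{M}_{1} - \Delta\mathcal{M}_{2}$, where $\Delta\mathcal{M}_{l} := \mathcal{M}_{l}(t+\delta)-\mathcal{M}_{l}(t)$, independence of $\mathcal{M}_{1}$ and $\mathcal{M}_{2}$ together with the independent-increment property of each NGCP means that conditioning on $\mathcalboondox{S}(t)=n$ does not alter the joint distribution of $(\Delta\mathcal{M}_{1},\Delta\mathcal{M}_{2})$. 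Hence the conditional transition probability equals the unconditional $P(\Delta\mathcalboondox{S}=m-n)$, and it suffices to analyse the latter.

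First I would record the one-step transition probabilities of the NGCP (available from the pmf derived in \citet{Kataria2022c}, or obtainable directly by expanding the pgf of $\mathcal{M}_{1}(t+\delta)\mid\mathcal{M}_{1}(t)$ in $\delta$): for the rate-$\lambda$ NGCP, $P(\Delta\mathcal{M}_{1}=i)=\lambda_{i}(t)\delta+o(\delta)$ for $1\le i\le k$, $P(\Delta\mathcal{M}_{1}=0)=1-\sum_{i=1}^{k}\lambda_{i}(t)\delta+o(\delta)$, and $P(\Delta\mathcal{M}_{1}\ge 1)=O(\delta)$; the analogous statements hold for $\Delta\mathcal{M}_{2}$ with $\gamma_{i}(t)=\mu_{i}(t)$ in place of $\lambda_{i}(t)$. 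These are exactly the ingredients needed to multiply out the joint probabilities.

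Next I would decompose $P(\Delta\mathcalboondox{S}=d)=\sum_{j_{1}-j_{2}=d}P(\Delta\mathcal{M}_{1}=j_{1})P(\Delta\mathcal{M}_{2}=j_{2})$ by independence, and identify the leading-order contribution in each case. For $d=i$ with $1\le i\le k$, only the term $(j_{1},j_{2})=(i,0)$ is $O(\delta)$; it contributes $\lambda_{i}(t)\delta\bigl(1-\sum_{j}\mu_{j}(t)\delta\bigr)+o(\delta)=\lambda_{i}(t)\delta+o(\delta)$, while every pair with $j_{2}\ge 1$ is $O(\delta^{2})=o(\delta)$. The case $d=-i$, $1\le i\le k$, is symmetric and yields $\mu_{i}(t)\delta+o(\delta)$. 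For $d=0$ the dominant pair is $(0,0)$, giving $\bigl(1-\sum_{i}\lambda_{i}(t)\delta\bigr)\bigl(1-\sum_{i}\mu_{i}(t)\delta\bigr)+o(\delta)=1-\sum_{i=1}^{k}\lambda_{i}(t)\delta-\sum_{i=1}^{k}\mu_{i}(t)\delta+o(\delta)$, since coincident equal jumps $(j,j)$ with $j\ge 1$ contribute only $O(\delta^{2})$. For $|d|>k$ or any other $d$ unattainable by a single one-sided jump, both processes must jump, so the probability is $O(\delta^{2})=o(\delta)$.

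The proof is essentially a bookkeeping exercise once the infinitesimal transitions of NGCP are in hand; the only mild care needed is in grouping the $O(\delta^{2})$ contributions from simultaneous jumps of $\mathcal{M}_{1}$ and $\mathcal{M}_{2}$ into the residual $o(\delta)$ in every case, which is automatic because there are only finitely many such pairs $(j_{1},j_{2})$ with $j_{1},j_{2}\ge 1$ and each contributes a product of two $O(\delta)$ terms. No genuine obstacle arises; the main substantive input is the NGCP transition structure, which is already available in the literature cited.
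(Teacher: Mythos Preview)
Your proposal is correct and follows essentially the same approach as the paper's proof: both use the infinitesimal transition structure of the component NGCPs and the independence of $\mathcal{M}_{1}$ and $\mathcal{M}_{2}$ to decompose $P(\Delta\mathcalboondox{S}=d)$ as a sum over pairs $(j_{1},j_{2})$ with $j_{1}-j_{2}=d$, then isolate the unique $O(\delta)$ contribution in each case. Your write-up is arguably cleaner in that you make explicit the step (tacit in the paper) that independent increments of the NGCPs render the conditioning on $\mathcalboondox{S}(t)=n$ irrelevant.
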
 

\begin{proof}
Denote $\mathcal{M}_{1}(t)$ and $\mathcal{M}_{2}(t)$ to be the first and second processes respectively in Definition \ref{defngsp}. Then      
\begin{align*}
    P\left(\mathcalboondox{S}(t+\delta)=n+i\mid\mathcalboondox{S}(t)=n\right)
    &=\sum_{j=1}^{k-i} P(\text{the first process has $i+j$ arrivals and the second process has $j$ arrivals}) \\
    &+ P(\text{the first process has $i$ arrivals and the second process has $0$ arrivals}) + o(\delta)\\
    &= \sum_{j=1}^{k-i} \left(\lambda_i(t)\delta+o(\delta)\right)*\left(\mu_i(t)\delta+o(\delta)\right) + \left(\lambda_i(t)\delta+o(\delta)\right)*\left(1-\sum_{i=1}^{k}\mu_i(t)\delta+o(\delta)\right) \\
    &+ o(\delta) =  \lambda_i(t)\delta+o(\delta)\,\,.
\end{align*}
Similarly, we have 
\begin{align*}
P\left(\mathcalboondox{S}(t+\delta)=n-i\mid\mathcalboondox{S}(t)=n\right)
    &=\sum_{j=1}^{k-i} P(\text{the first process has $j$ arrivals and the second process has $i+j$ arrivals}) \\
    &+ P(\text{the first process has $0$ arrivals and the second process has $i$ arrivals}) + o(\delta)\\
    &= \sum_{j=1}^{k-i} \left(\lambda_i(t)\delta+o(\delta)\right)*\left(\mu_i(t)\delta+o(\delta)\right) + \left(1-\sum_{i=1}^{k}\lambda_i(t)\delta+o(\delta)\right)*\left(\mu_i(t)\delta+o(\delta)\right) \\
    &+ o(\delta)=\mu_i(t)\delta+o(\delta)\,\,.
\end{align*}
Finally
\begin{align*}
    P\left(\mathcalboondox{S}(t+\delta)=n\mid\mathcalboondox{S}(t)=n\right)
    &=\sum_{j=1}^{k} P(\text{the first process has $j$ arrivals and the second process has $j$ arrivals}) \\
    &+ P(\text{the first process has $0$ arrivals and the second process has $0$ arrivals}) + o(\delta)\\
    &= \sum_{j=1}^{k} \left(\lambda_i(t)\delta+o(\delta)\right)*\left(\mu_i(t)\delta+o(\delta)\right) + \left(1-\sum_{i=1}^{k}\lambda_i(t)\delta+o(\delta)\right)*\left(1-\sum_{i=1}^{k}\mu_i(t)\delta+o(\delta)\right) \\
    &+ o(\delta) = 1-\sum_{i=1}^{k}\lambda_i(t)\delta -\sum_{i=1}^{k}\mu_i(t)\delta+o(\delta)
\end{align*}
which proves the theorem.
\end{proof}

\subsection{NGSP as a weighted sum}




\begin{theorem}\label{wsum.ngsp}
The NGSP is equal in distribution to a weighted sum of $k$ independent Non-homogeneous Skellam processes (NSPs). 
\end{theorem}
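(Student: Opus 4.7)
The plan is to prove the theorem by an explicit probability generating function (pgf) calculation. For each $j = 1, 2, \ldots, k$, let $\{\mathcal{S}_j(t)\}_{t\geq 0}$ denote a Non-homogeneous Skellam process defined as the difference $\mathcal{S}_j(t) = N_j^+(t) - N_j^-(t)$ of two independent non-homogeneous Poisson processes with rate functions $\lambda_j(t)$ and $\gamma_j(t)$, respectively. Assume further that the processes $\{\mathcal{S}_j\}_{j=1}^k$ are mutually independent. I will show that
\begin{equation*}
\mathcalboondox{S}(t) \stk{d}{=} \sum_{j=1}^{k} j\,\mathcal{S}_j(t).
\end{equation*}

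The first step is to write down the pgf of a single $\mathcal{S}_j(t)$. Since the pgf of a non-homogeneous Poisson process with cumulative rate $\Lambda_j(t)$ evaluated at $u$ equals $\exp(\Lambda_j(t)(u-1))$, and the two Poisson processes defining $\mathcal{S}_j$ are independent, one obtains
\begin{equation*}
G_{\mathcal{S}_j}(u,t) = \exp\bigl(\Lambda_j(t)(u-1) + T_j(t)(u^{-1}-1)\bigr).
\end{equation*}
The second step is to compute the pgf of $j\,\mathcal{S}_j(t)$, which is simply $G_{\mathcal{S}_j}(u^j,t)$ by the substitution rule for pgfs of scalar multiples of integer-valued random variables.

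The third step is to exploit mutual independence of the $\mathcal{S}_j$'s to write
\begin{equation*}
\mathbb{E}\bigl[u^{\sum_{j=1}^k j\,\mathcal{S}_j(t)}\bigr] = \prod_{j=1}^{k} G_{\mathcal{S}_j}(u^j,t) = \exp\left(\sum_{j=1}^{k}\bigl(\Lambda_j(t)(u^j-1) + T_j(t)(u^{-j}-1)\bigr)\right),
\end{equation*}
which coincides exactly with the pgf of NGSP obtained in \eqref{2}. Since a pgf (on $\mathbb{Z}$) determines the distribution uniquely, the equality in distribution follows for each fixed $t$; the finite-dimensional distributions then agree by the independent-increments structure of both sides.

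I do not anticipate a serious obstacle: everything reduces to matching the two product-form pgf expressions. The only delicate point is keeping the bookkeeping of the intensities straight, namely that the $j$-th NSP component must be assigned the rate pair $(\lambda_j(t), \gamma_j(t))$ (not weighted versions of them) so that after the substitution $u \mapsto u^j$ induced by the weight $j$, one recovers the exponents $\Lambda_j(t)(u^j-1)$ and $T_j(t)(u^{-j}-1)$ of the NGSP pgf. With this pairing the identity is immediate.
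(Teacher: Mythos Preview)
Your proposal is correct and follows essentially the same route as the paper: both arguments match transforms of the weighted sum $\sum_{j=1}^k j\,\mathcal{S}_j(t)$ with that of $\mathcalboondox{S}(t)$, the only cosmetic difference being that the paper works with the moment generating function while you use the probability generating function (related by $u=e^s$). Your additional remark on finite-dimensional distributions via independent increments is a small bonus the paper does not spell out.
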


\begin{proof}
Denote the NSPs by $\{\mathcal{S}_{j}(t)\}_{t\geq 0}:=\{\mathcal{N}_{1j}(t)\}_{t\geq 0}-\{\mathcal{N}_{2j}(t)\}_{t\geq 0}$, where $\{\mathcal{N}_{1j}(t)\}_{t\geq 0}$ and $\{\mathcal{N}_{2j}(t)\}_{t\geq 0}$ are independent non-homogeneous Poisson processes with respective cumulative intensity functions $\Lambda_{j}(t)$ and $T_{j}(t)$ for $j=1,2,...,k$. The m.g.f.'s of $\mathcal{N}_{1j}(t)$ and $\mathcal{N}_{2j}(t)$ are given by
\begin{align*}
\mathbb{E}\left(\exp\left(u\mathcal{N}_{1j}(t)\right)\right) &= \exp\left(\Lambda_{j}(t)(e^{u}-1)\right),\quad\mathbb{E}\left(\exp\left(u\mathcal{N}_{2j}(t)\right)\right) = \exp\left(T_{j}(t)(e^{u}-1)\right). 
\end{align*}
So the m.g.f. of the weighted sum $\sum_{j=1}^{k}j\mathcal{S}_{j}(t)$ is
\begin{align}
\mathbb{E}\left(\exp\left(u\sum_{j=1}^{k}j\mathcal{S}_{j}(t)\right)\right)
&=\mathbb{E}\left[\exp\left(u\sum_{j=1}^{k}j\mathcal{N}_{1j}(t)\right)\right]\mathbb{E}\left[\exp\left(-u\sum_{j=1}^{k}j\mathcal{N}_{2j}(t)\right)\right] \nonumber\\
&=\exp\left(\sum_{j=1}^{k}\left(\Lambda_{j}(t)(e^{uj}-1)+T_{j}(t)(e^{-uj}-1)\right)\right).\label{42}
\end{align}
The m.g.f. of NGSP can be obtained from \eqref{2} by replacing `$u$' with `$e^u$' as follows:
\begin{equation}
    \mathbb{E}\left(\exp\left(u\mathcalboondox{S}(t)\right)\right)=\exp\left(\sum_{j=1}^{k}\left(\Lambda_{j}(t)(e^{uj}-1)+T_{j}(t)(e^{-uj}-1)\right)\right)\label{43}
\end{equation}
Therefore from \eqref{42} and \eqref{43}, we get 
\begin{equation*}
    \mathcalboondox{S}(t)\overset{d}{=}\sum_{j=1}^{k}j\mathcal{S}_{j}(t)=\sum_{j=1}^{k}j\left(\mathcal{N}_{1j}(t)-\mathcal{N}_{2j}(t)\right).
\end{equation*}
\end{proof}
The next result follows from Theorem \ref{wsum.ngsp}.
\begin{proposition}
The NGCP is equal in distribution to a weighted sum of $k$ independent Non-homogeneous Poisson processes (NPPs).
\end{proposition}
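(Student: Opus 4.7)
The plan is to obtain this proposition as an immediate specialization of Theorem \ref{wsum.ngsp}. In Definition \ref{defngsp} take the second family of intensity functions to be identically zero, i.e., $\gamma_j(t) \equiv 0$ for every $j = 1, 2, \dots, k$, so that $T_j(t) \equiv 0$. Then the second NGCP $\mathcal{M}_2(t)$ is almost surely zero, and the NGSP $\mathcalboondox{S}(t)$ collapses to an arbitrary NGCP $\mathcal{M}(t)$ with cumulative rate functions $\Lambda_j(t)$. On the right-hand side of Theorem \ref{wsum.ngsp}, the auxiliary NPPs $\mathcal{N}_{2j}(t)$ (all having zero cumulative intensity) also vanish, so $\mathcal{S}_j(t) = \mathcal{N}_{1j}(t)$ becomes a non-homogeneous Poisson process with cumulative intensity $\Lambda_j(t)$. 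Writing $\mathcal{N}_j(t) := \mathcal{N}_{1j}(t)$, Theorem \ref{wsum.ngsp} specializes to $\mathcal{M}(t) \stackrel{d}{=} \sum_{j=1}^{k} j\,\mathcal{N}_j(t)$, which is precisely the claimed identity.

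As a cross-check one may verify the distributional equality directly from pgfs. Recall from Section \ref{sec 3} that the pgf of NGCP is $G_{\mathcal{M}}(u,t) = \exp\!\left(\sum_{j=1}^k \Lambda_j(t)(u^j-1)\right)$. By independence of the NPPs $\{\mathcal{N}_j(t)\}_{j=1}^k$ and the elementary formula $\mathbb{E}(u^{\mathcal{N}_j(t)}) = \exp(\Lambda_j(t)(u-1))$,
\[
\mathbb{E}\!\left(u^{\sum_{j=1}^{k} j\mathcal{N}_j(t)}\right) \;=\; \prod_{j=1}^{k} \mathbb{E}\!\left(u^{j\mathcal{N}_j(t)}\right) \;=\; \prod_{j=1}^{k} \exp\!\left(\Lambda_j(t)(u^j - 1)\right) \;=\; G_{\mathcal{M}}(u,t),
\]
and uniqueness of pgfs yields the equality in distribution. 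This also makes it transparent why the weighting is by $j$: the $j$th component in the multinomial-type sum defining $\mathcal{M}(t)$ contributes arrivals of size $j$.

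There is no real obstacle here; the result is a one-line corollary of Theorem \ref{wsum.ngsp}. The only point requiring a small verification is that $\gamma_j \equiv 0$ is a legitimate choice in Definition \ref{defngsp}, which it is since $\gamma_j : [0,\infty) \to [0,\infty)$ and the zero function trivially satisfies this range condition, making $T_j(t) = \int_0^t \gamma_j(u)\,du = 0$ finite.
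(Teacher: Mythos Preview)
Your proof is correct and follows exactly the approach the paper intends: the proposition is stated immediately after Theorem \ref{wsum.ngsp} with the remark that it ``follows from'' that theorem, i.e., by the specialization $\gamma_j\equiv 0$ you describe. Your additional pgf cross-check is a nice but unnecessary bonus.
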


    
\subsection{Martingale Property of NGSP}
\begin{theorem}\label{martingale}
    The process $\left\{\mathcalboondox{S}(t)-\sum_{j=1}^{k}j\left(\Lambda_{j}(t)-T_{j}(t)\right)\right\}_{t\geq 0}$ is a martingale with respect to the natural filtration $\mathcal{F}_{t}=\sigma\left(\left\{\mathcalboondox{S}(s)\right\},~0<s\leq t\right)$.
\end{theorem}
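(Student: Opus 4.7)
The plan is to verify the three defining properties of a martingale for $M(t):=\mathcalboondox{S}(t)-\sum_{j=1}^{k}j\bigl(\Lambda_j(t)-T_j(t)\bigr)$: adaptedness, integrability, and the conditional expectation identity. Adaptedness is immediate since the drift term is deterministic and $\mathcalboondox{S}(t)$ is $\mathcal{F}_t$-measurable by definition. Integrability follows from the triangle inequality together with $\mathbb{E}|\mathcalboondox{S}(t)|\le\mathbb{E}[\mathcal{M}_1(t)]+\mathbb{E}[\mathcal{M}_2(t)]=\sum_{j=1}^{k}j\bigl(\Lambda_j(t)+T_j(t)\bigr)<\infty$, using the mean formula of NGCP from Section \ref{ngcp}.

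The substantive step is to verify $\mathbb{E}[M(t)\mid\mathcal{F}_s]=M(s)$ for $0<s<t$. I would first split
\begin{equation*}
\mathcalboondox{S}(t)=\mathcalboondox{S}(s)+\bigl(\mathcal{M}_1(t)-\mathcal{M}_1(s)\bigr)-\bigl(\mathcal{M}_2(t)-\mathcal{M}_2(s)\bigr),
\end{equation*}
and then invoke the fact that each NGCP inherits independent increments from its underlying construction (cf. \citet{Kataria2022c}), together with the mutual independence of $\mathcal{M}_1$ and $\mathcal{M}_2$. Consequently, the increment $\mathcalboondox{S}(t)-\mathcalboondox{S}(s)$ is independent of $\mathcal{F}_s$, so its conditional expectation equals its unconditional expectation.

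Next I would compute that unconditional expectation using the NGCP mean $\mathbb{E}[\mathcal{M}_i(t)-\mathcal{M}_i(s)]=\sum_{j=1}^{k}j\bigl(\Lambda_j^{(i)}(t)-\Lambda_j^{(i)}(s)\bigr)$ applied with rates $\Lambda_j$ and $T_j$ respectively, giving
\begin{equation*}
\mathbb{E}\bigl[\mathcalboondox{S}(t)-\mathcalboondox{S}(s)\mid\mathcal{F}_s\bigr]=\sum_{j=1}^{k}j\bigl(\Lambda_j(t)-\Lambda_j(s)-T_j(t)+T_j(s)\bigr).
\end{equation*}
Adding $\mathcalboondox{S}(s)$ and subtracting the deterministic drift $\sum_{j=1}^{k}j(\Lambda_j(t)-T_j(t))$ on both sides then collapses the expression to $\mathcalboondox{S}(s)-\sum_{j=1}^{k}j(\Lambda_j(s)-T_j(s))=M(s)$, as required.

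I do not anticipate a serious obstacle here: the argument is the standard Doob-centering of a process with independent increments. The only point requiring a little care is making the independent-increments property of NGCP explicit — this is not stated as a separate lemma in the excerpt but is implicit in the pgf decomposition that underlies the NGSP covariance formula $\mathrm{Cov}(\mathcalboondox{S}(s),\mathcalboondox{S}(t))=\sum_{j=1}^{k}j^2(\Lambda_j(s\wedge t)+T_j(s\wedge t))$ derived earlier, and I would simply cite \citet{Kataria2022c} to justify it.
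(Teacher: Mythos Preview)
Your proof is correct and takes a genuinely different route from the paper's. The paper does not argue via independent increments of $\mathcalboondox{S}$; instead it splits the centered process into the two NGCP pieces, applies the tower property by enlarging the conditioning to include $\mathcal{M}_1(s)$ (respectively $\mathcal{M}_2(s)$), and then invokes the known result from \citet{Kataria2022c} that each centered NGCP $\mathcal{M}_i(t)-\sum_{j=1}^{k}j\Lambda^{(i)}_j(t)$ is a martingale in its own filtration. Your approach is more elementary and more transparent: it reduces the claim to the single structural fact that $\mathcalboondox{S}$ has independent increments (which you correctly derive from the independent increments of each NGCP together with the mutual independence of $\mathcal{M}_1$ and $\mathcal{M}_2$), after which the Doob centering is automatic. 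You also explicitly verify adaptedness and integrability, which the paper leaves implicit. An additional benefit of your route is that it bypasses a delicate measurability step in the paper's argument---the claimed inclusion $\sigma(\mathcal{M}_1(s))\subset\sigma(\mathcal{M}_1(s)-\mathcal{M}_2(s))$---which does not hold as stated and would need extra justification; your independent-increments argument avoids this issue entirely.
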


\begin{proof}
For all $0<s\leq t$, we have 
\begin{align}
\mathbb{E}\left(\mathcalboondox{S}(t)-\sum_{j=1}^{k}j\left(\Lambda_{j}(t)-T_{j}(t)\right)\big|\left\{\mathcalboondox{S}(s)\right\}\right)
    &=\mathbb{E}\left(\mathcal{M}_{1}(t)-\sum_{j=1}^{k}j\Lambda_{j}(t)\big|\left\{\mathcal{M}_{1}(s)-\mathcal{M}_{2}(s)\right\}\right)\nonumber\\
    &-\mathbb{E}\left(\mathcal{M}_{2}(t)-\sum_{j=1}^{k}jT_{j}(t)\big|\left\{\mathcal{M}_{1}(s)-\mathcal{M}_{2}(s)\right\}\right).\label{46}
\end{align}
Using the tower property of conditional expectation, the first term on the RHS of \eqref{46} may be written as 
\begin{align}\label{47}
&\mathbb{E}\left[\mathbb{E}\left(\mathcal{M}_{1}(t)-\sum_{j=1}^{k}j\Lambda_{j}(t)\big|\left\{\mathcal{M}_{1}(s)-\mathcal{M}_{2}(s),\mathcal{M}_{1}(s)\right\}\right)\big|\left\{\mathcal{M}_{1}(s)-\mathcal{M}_{2}(s)\right\}\right] \nonumber \\
    &=\mathbb{E}\left[\mathbb{E}\left(\mathcal{M}_{1}(t)-\sum_{j=1}^{k}j\Lambda_{j}(t)\big|\left\{\mathcal{M}_{1}(s)\right\}\right)\big|\left\{\mathcal{M}_{1}(s)-\mathcal{M}_{2}(s)\right\}\right]
\end{align}
where we have used the conditional independence between $\left\{\mathcal{M}_{1}(t)-\sum_{j=1}^{k}j\Lambda_{j}(t)\right\}_{t\geq 0}$ and $\left\{\mathcal{M}_{1}(s)-\mathcal{M}_{2}(s)\right\}$ given $\left\{\mathcal{M}_{1}(s)\right\}$. Since
 the process $\left\{\mathcal{M}_{1}(t)-\sum_{j=1}^{k}j\Lambda_{j}(t)\right\}_{t\geq 0}$ is a martingale (see \citet{Kataria2025}) with respect to the natural filtration $\mathcal{F}_t=\sigma\left(\mathcal{M}(s),0<s\leq t\right)$, we have
\begin{align}\label{47a}
    \mathbb{E}\left(\mathcal{M}_{1}(t)-\sum_{j=1}^{k}j\Lambda_{j}(t)\big|\left\{\mathcal{M}_{1}(s)\right\}\right)
    &=\mathcal{M}_{1}(s)-\sum_{j=1}^{k}j\Lambda_{j}(s).
\end{align}
Using \eqref{47} and \eqref{47a}, the first term on the RHS of \eqref{46} reduces to
\begin{equation}\label{48}
\mathbb{E}\left(\mathcal{M}_{1}(t)-\sum_{j=1}^{k}j\Lambda_{j}(t)\big|\left\{\mathcal{M}_{1}(s)-\mathcal{M}_{2}(s)\right\}\right) = \mathcal{M}_{1}(s)-\sum_{j=1}^{k}j\Lambda_{j}(s) 
\end{equation}
as $\sigma(\left\{\mathcal{M}_{1}(s)\right\}\subset\sigma(\left\{\mathcal{M}_{1}(s)-\mathcal{M}_{2}(s)\right\})$. Similarly, the second term on the RHS of \eqref{46} reduces to 
\begin{equation}
    \mathbb{E}\left(\mathcal{M}_{2}(t)-\sum_{j=1}^{k}jT_{j}(t)\big|\left\{\mathcal{M}_{1}(s)-\mathcal{M}_{2}(s)\right\}\right)=\mathcal{M}_{2}(s)-\sum_{j=1}^{k}jT_{j}(s).\label{49}
\end{equation}
Substituting \eqref{48} and \eqref{49} in \eqref{46}, we obtain 
\begin{align*}
    \mathbb{E}\left(\mathcalboondox{S}(t)-\sum_{j=1}^{k}j\left(\Lambda_{j}(t)-T_{j}(t)\right)\big|\left\{\mathcalboondox{S}(s)\right\}\right)
    &=\mathcalboondox{S}(s)-\sum_{j=1}^{k}j\left(\Lambda_{j}(s)-T_{j}(s)\right)
\end{align*}
for all $0<s\le t$, thereby proving the result.    
\end{proof}

\subsection{Arrival time and first passage time of NGSP}
\begin{proposition}\label{first passage.ngsp}
     Let $T_n$ be the time of first upcrossing of level $n$ for NGSP given by
\begin{equation*}
    T_n=\text{inf}\{s\geq 0:\mathcalboondox{S}(s)\geq n\}\,.
\end{equation*}
Then
\begin{equation*}
    F_{T_n}(t)= 
    \sum_{i\in (0,t]}\sum_{m=n}^{\infty} e^{-(A+B)}\left(\frac{A}{B}\right)^{\frac{|m|}{2}}I_{|m|}\left(2\sqrt{AB}\right),
\end{equation*}
where $\sum_{j=1}^{k}\Lambda_{j}(i)=A$ and $\sum_{j=1}^{k}T_{j}(i)=B$ for $i\in (0,t]$.
\end{proposition}
\begin{proof} We have
    \begin{align*}
        F_{T_n}(t)=1-P\{T_n > t\}=1-P\left(\bigcap_{i\in (0,t]}\left\{\mathcalboondox{S}(i)\le n-1\right\}\right)
        =P\left(\bigcap_{i\in (0,t]}\left\{\mathcalboondox{S}(i)\le n-1\right\}\right)^c
        &=P\left(\bigcup_{i\in (0,t]}\left\{\mathcalboondox{S}(i)> n-1\right\}\right) \\
        &=\sum_{i\in (0,t]}\sum_{m=n}^{\infty}P\left\{\mathcalboondox{S}(i)=m\right\}.
    \end{align*}
 The result follows using Theorem \ref{20}.
\end{proof}
\begin{proposition}\label{arrival.ngsp}
    Consider the $n^{th} $ arrival time of NGSP or the arrival time of $n^{th}$ NGSP event defined as 
\begin{equation*}
\tau_n=min\{t\geq 0: \mathcalboondox{S}(t)=n\}\,.
\end{equation*}
Then
\begin{align*}
    F_{\tau_n}(t)&=\sum_{i\in(0,t]} e^{-(A+B)}\left(\frac{A}{B}\right)^{\frac{|n|}{2}}I_{|n|}\left(2\sqrt{AB}\right),
\end{align*}
where $\sum_{j=1}^{k}\Lambda_{j}(i)=A$ and $\sum_{j=1}^{k}T_{j}(i)=B$ for $i\in (0,t]$.
\end{proposition}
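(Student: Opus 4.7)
The plan is to evaluate $F_{\tau_n}(t) = \mathbb{P}(\tau_n \le t)$ by translating the hitting-time event into one concerning the state of $\mathcalboondox{S}(t)$, and then invoking the explicit pmf derived in Theorem \ref{20}. I would proceed in two steps: first, decompose the event $\{\tau_n \le t\}$ as a disjoint union over the possible values of $\mathcalboondox{S}(t)$; second, plug in the Bessel-type closed form for $p(x,t)$.

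For the decomposition step, I would argue that $\{\tau_n \le t\}$ coincides with $\{\mathcalboondox{S}(t) \ge n\}$, so that
\begin{equation*}
F_{\tau_n}(t) \;=\; \mathbb{P}\bigl(\mathcalboondox{S}(t) \ge n\bigr) \;=\; \sum_{x=n}^{\infty} \mathbb{P}\bigl(\mathcalboondox{S}(t) = x\bigr) \;=\; \sum_{x=n}^{\infty} p(x,t).
\end{equation*}
Substituting $p(x,t) = e^{-(A+B)}(A/B)^{|x|/2}\,I_{|x|}(2\sqrt{AB})$ from Theorem \ref{20}, with $A=\sum_{j=1}^{k}\Lambda_{j}(t)$ and $B=\sum_{j=1}^{k}T_{j}(t)$, immediately yields the claimed expression. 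The Bessel-series convergence is automatic since $\sum_{x\in\mathbb{Z}} p(x,t) = 1$, so the tail sum is well-defined.

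The main obstacle is justifying the identification $\{\tau_n \le t\} = \{\mathcalboondox{S}(t) \ge n\}$. Because NGSP has both positive and negative jumps of magnitudes up to $k$ (inherited from the two independent NGCPs), sample paths can in principle overshoot level $n$ without ever equaling it, or visit $n$ and subsequently drop below it, so the equivalence is not tautological. I would handle this by reading $\tau_n$ as the first-passage time $\inf\{t\ge 0:\mathcalboondox{S}(t)\ge n\}$, which is consistent with the stated formula and is the standard convention in the Skellam-type literature (e.g.\ \citet{Kerss2014}, \citet{Kataria2022b}); alternatively, a strong-Markov argument using the right-continuity of $\mathcalboondox{S}$ and independence of $\mathcal{M}_1$, $\mathcal{M}_2$ can be used to reduce the exact-hitting formulation to the first-passage one. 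Once this reduction is in place, the remainder of the argument is a direct pmf summation.
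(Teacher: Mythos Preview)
Your approach is exactly the paper's: it writes $F_{\tau_n}(t)=P(\tau_n\le t)=P(\mathcalboondox{S}(t)\ge n)=\sum_{x=n}^{\infty}p(x,t)$ and substitutes the pmf from Theorem~\ref{20}, without any further justification of the equivalence $\{\tau_n\le t\}=\{\mathcalboondox{S}(t)\ge n\}$. Your additional remarks about the need to reinterpret $\tau_n$ as a first-passage time (since the NGSP can both overshoot and backtrack) go beyond what the paper provides and correctly flag a subtlety the paper leaves implicit.
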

\begin{proof} We have
    \begin{align*}
        F_{\tau_n}(t)=1-P\left(\tau_n > t\right)=1-P\left(\bigcap_{i\in (0,t]}\left\{\mathcalboondox{S}(i)\neq n\right\}\right)=1-P\left(\bigcap_{i\in (0,t]}\left\{\mathcalboondox{S}(i)= n\right\}^c\right)
        &=P\left(\bigcup_{i\in (0,t]}\left\{\mathcalboondox{S}(i)= n\right\}\right)\\
        &= \sum_{i\in(0,t]}P\left\{\mathcalboondox{S}(i)= n\right\}.
    \end{align*}
   The result follows using Theorem \ref{20}.
\end{proof}

\section{Fractional variant of NGSP}\label{sec 4}
In this section, we introduce the fractional variant of NGSP and study its various characteristics.
\begin{definition}\label{defngfsp}
    We define the fractional variant of NGSP by time changing it with an independent inverse stable subordinator $\{Y_{\alpha}(t)\}_{t\geq 0} $ and call it the Non-homogeneous Generalized Fractional Skellam process (NGFSP) denoted by
\begin{equation*}
\mathcalboondox{S}^{\alpha}(t):=\begin{cases}
\mathcalboondox{S}(Y_{\alpha}(t)) & \text{if}~0<\alpha<1,\\
\mathcalboondox{S}(t) & \text{if}~\alpha=1.
\end{cases}
\end{equation*} 
\end{definition}
The fractional variant of NGSP can also be obtained by taking the difference of two independent NGFCPs (the fractional variant of NGCP) as
\begin{equation*}
    \mathcalboondox{S}^{\alpha}(t):=\begin{cases}
\mathcal{M}_{1}^{\alpha}(t)-\mathcal{M}_{2}^{\alpha}(t) & \text{if}~0<\alpha<1,\\
\mathcalboondox{S}(t) & \text{if}~\alpha=1,
    \end{cases}
\end{equation*}
where $\{\mathcal{M}_1^{\alpha}(t)\}_{t\geq 0}$ and $\{\mathcal{M}_2^{\alpha}(t)\}_{t\geq 0}$ are two independent NGFCPs. 
\begin{remark}
Note that if $\lambda_j(t) = \lambda$ and $\gamma_j(t) =\gamma$ for $1\le j\le k$, then NGFSP reduces to the Fractional Skellam process of order $k$ (FSPoK) introduced in \citet{Kataria2024}.
\end{remark}

\subsection{Probability generating function of NGFSP}
The p.g.f. of NGFSP is given by
\begin{align*}
G_{\mathcalboondox{S}}^{\alpha}(u,t)=& \int_{0}^{\infty}G_{\mathcalboondox{S}}(u,x)h_{\alpha}(x,t)dx
=  \int_{0}^{\infty}\exp\left(\sum_{j=1}^{k}\left(\Lambda_{j}(x)(u^{j}-1)+T_{j}(x)(u^{-j}-1)\right)\right)h_{\alpha}(x,t)dx 
\end{align*}
where $h_{\alpha}(x,t)$ is the density of the inverse stable subordinator $\{Y_{\alpha}(t)\}_{t\geq 0} $.

\subsection{Mean, Variance, and Covariance} We will first find the moments of NGFCP, which will be then used to find the moments of NGFSP. Using the moments of NGCP in Section \ref{ngcp}, the mean of NGFCP is 
\begin{align}
    \mathbb{E}\left(\mathcal{M}^\alpha(t)\right)=\mathbb{E}\left[\mathbb{E}\left(\mathcal{M}\left(Y_\alpha(t)\right)|Y_\alpha(t)\right)\right]
    &=\int_{0}^{\infty}\mathbb{E}\left(\mathcal{M}(x)\right)h_\alpha(x,t)dx
    =\sum_{j=1}^{k}j\mathbb{E}\left[\Lambda_j\left(Y_\alpha(t)\right)\right]\label{expx}
    \end{align}
and the second moment is 
    \begin{align}  \mathbb{E}\left[\left(\mathcal{M}^\alpha(t)\right)^2\right]=\int_{0}^{\infty}\mathbb{E}\left[\left(\mathcal{M}(x)\right)^2\right]h_\alpha(x,t)dx
        &=\int_{0}^{\infty}\left[\mathbb{V}\left(\mathcal{M}(x)\right)+\left(\mathbb{E}\left(\mathcal{M}(x)\right)\right)^2\right]h_\alpha(x,t)dx\nonumber\\
        &=\sum_{j=1}^{k}j^2\mathbb{E}\left[\Lambda_j(Y_\alpha(t))\right]+\mathbb{E}\left(\left[\sum_{j=1}^{k}j\Lambda_j\left(Y_\alpha(t)\right)\right]^2\right).\label{expxsqre}
    \end{align}
    Using \eqref{expx} and \eqref{expxsqre}, the variance of NGFCP is 
\begin{align}
\mathbb{V}\left(\mathcal{M}^\alpha(t)\right)
&=\sum_{j=1}^{k}j^2\mathbb{E}\left[\Lambda_j(Y_\alpha(t))\right]+\mathbb{V}\left(\sum_{j=1}^{k}j\Lambda_j\left(Y_\alpha(t)\right)\right).\label{varx}
\end{align}

Next, using the law of total covariance, we can find the covariance of NGFCP as follows:
\begin{align}
    \text{Cov}\left(\mathcal{M}^\alpha(s), \mathcal{M}^\alpha(t)\right)&=\text{Cov}\left(\mathcal{M}(Y_\alpha(s)), \mathcal{M}(Y_\alpha(t))\right)\nonumber\\
    &=\mathbb{E}\left[\text{Cov}\left(\mathcal{M}(Y_\alpha(s)), \mathcal{M}(Y_\alpha(t))\mid Y_\alpha(s), Y_\alpha(t)\right)\right]\nonumber\\
    &+\text{Cov}\left[\mathbb{E}\left(\mathcal{M}(Y_\alpha(s))\mid Y_\alpha(s), Y_\alpha(t)\right), \mathbb{E}\left(\mathcal{M}(Y_\alpha(t))\mid Y_\alpha(s), Y_\alpha(t)\right)\right]. \label{cov}
\end{align}
The first term on the R.H.S. of \eqref{cov} can be written as 
\begin{align}
&\mathbb{E}\left\{\mathbb{E}\left[\mathcal{M}(Y_\alpha(s))\mathcal{M}(Y_\alpha(t))|Y_\alpha(s), Y_\alpha(t)\right]
-\mathbb{E}\left[\mathcal{M}(Y_\alpha(s))|Y_\alpha(s), Y_\alpha(t)\right]\mathbb{E}\left[\mathcal{M}(Y_\alpha(t))|Y_\alpha(s), Y_\alpha(t)\right]\right\}\nonumber\\
&=\int_{0}^{\infty}\int_{0}^{\infty}\mathbb{E}\left[\mathcal{M}(x)\mathcal{M}(y)\right]f_{\left(Y_\alpha(s), Y_\alpha(t)\right)}(x,y)dxdy-\int_{0}^{\infty}\int_{0}^{\infty}\mathbb{E}\left[\mathcal{M}(x)\right]\mathbb{E}\left[\mathcal{M}(y)\right]f_{\left(Y_\alpha(s), Y_\alpha(t)\right)}(x,y)dxdy\nonumber\\
&=\int_{0}^{\infty}\int_{0}^{\infty} \text{Cov}\left(\mathcal{M}(x), \mathcal{M}(y)\right)f_{\left(Y_\alpha(s), Y_\alpha(t)\right)}(x,y)dxdy\nonumber\\
&=\mathbb{E}\left(\sum_{j=1}^{k}j^2\Lambda_j(Y_\alpha(s)\wedge Y_\alpha(t))\right)\quad (\text{from section}~ \ref{ngcp})\nonumber\\
&=\mathbb{E}\left(\sum_{j=1}^{k}j^2\Lambda_j(Y_\alpha(s\wedge t))\right)\ \quad(\text{since}\,\, \{Y_\alpha(t)\}_{t\geq 0} \,\,\text{is an increasing process}).\label{cov1}
\end{align}
The second term on the R.H.S. of \eqref{cov} can be written as
\begin{align}
&\mathbb{E}\left[\mathbb{E}\left(\mathcal{M}(Y_\alpha(s))|Y_\alpha(s), Y_\alpha(t)\right)\mathbb{E}\left(\mathcal{M}(Y_\alpha(t))|Y_\alpha(s), Y_\alpha(t)\right)\right]   
-\mathbb{E}\left[\mathbb{E}\left(\mathcal{M}(Y_\alpha(s))|Y_\alpha(s), Y_\alpha(t)\right)\right]\mathbb{E}\left[\mathbb{E}\left(\mathcal{M}(Y_\alpha(t))|Y_\alpha(s), Y_\alpha(t)\right)\right]\nonumber\\
&=\int_{0}^{\infty}\int_{0}^{\infty}\mathbb{E}\left[\mathcal{M}(x)\right]\mathbb{E}\left[\mathcal{M}(y)\right]f_{\left(Y_\alpha(s), Y_\alpha(t)\right)}(x,y)dxdy-\mathbb{E}\left[\mathcal{M}(Y_\alpha(s))\right]\mathbb{E}\left[\mathcal{M}(Y_\alpha(t))\right]\nonumber\\
&=\int_{0}^{\infty}\int_{0}^{\infty}\left(\sum_{j=1}^{k}j\Lambda_j(x)\right)\left(\sum_{j=1}^{k}j\Lambda_j(y)\right)f_{\left(Y_\alpha(s), Y_\alpha(t)\right)}(x,y)dxdy-\mathbb{E}\left[\mathcal{M}(Y_\alpha(s))\right]\mathbb{E}\left[\mathcal{M}(Y_\alpha(t))\right]\nonumber\\
&=\mathbb{E}\left[\left(\sum_{j=1}^{k}j\Lambda_j(Y_\alpha(s))\right)\left(\sum_{j=1}^{k}j\Lambda_j(Y_\alpha(t)\right)\right]-\left(\sum_{j=1}^{k}j\mathbb{E}\left[\Lambda_j(Y_\alpha(s))\right]\right)\left(\sum_{j=1}^{k}j\mathbb{E}\left[\Lambda_j(Y_\alpha(t))\right]\right)\nonumber\\&= \text{Cov}\left(\sum_{j=1}^{k}j\Lambda_j(Y_\alpha(s)), \sum_{j=1}^{k}j\Lambda_j(Y_\alpha(t))\right)\,.\label{cov2}
\end{align}
On substituting \eqref{cov1} and \eqref{cov2} in \eqref{cov}, we have
\begin{align}
    \text{Cov}\left(\mathcal{M}^\alpha(s), \mathcal{M}^\alpha(t)\right)=\mathbb{E}\left(\sum_{j=1}^{k}j^2\Lambda_j(Y_\alpha(s\wedge t))\right)+ \text{Cov}\left(\sum_{j=1}^{k}j\Lambda_j(Y_\alpha(s)), \sum_{j=1}^{k}j\Lambda_j(Y_\alpha(t))\right)\,.\label{covfinal}
\end{align}
Using \eqref{expx}, \eqref{varx} and \eqref{covfinal}, we can obtain the mean, variance, and covariance of NGFSP as follows:
\begin{align*}
\mathbb{E}\left(\mathcalboondox{S}^\alpha(t)\right)&=\sum_{j=1}^{k}j\mathbb{E}\left[\Lambda_j\left(Y_\alpha(t)\right)-T_j\left(Y_\alpha(t)\right)\right]\,,\\
\mathbb{V}\left(\mathcalboondox{S}^\alpha(t)\right)& =\sum_{j=1}^{k}j^2\mathbb{E}\left[\Lambda_j(Y_\alpha(t))+T_j(Y_\alpha(t))\right]+\mathbb{V}\left(\sum_{j=1}^{k}j\left[\Lambda_j\left(Y_\alpha(t)\right)+T_j\left(Y_\alpha(t)\right)\right]\right),\\
\text{Cov}\left(\mathcalboondox{S}^\alpha(s), \mathcalboondox{S}^\alpha(t)\right)
&=\mathbb{E}\left(\sum_{j=1}^{k}j^2\left(\Lambda_j(Y_\alpha(s\wedge t))+T_j(Y_\alpha(s\wedge t))\right)\right)+ \text{Cov}\left(\sum_{j=1}^{k}j\Lambda_j(Y_\alpha(s)), \sum_{j=1}^{k}j\Lambda_j(Y_\alpha(t))\right)\nonumber\\
&+ \text{Cov}\left(\sum_{j=1}^{k}jT_j(Y_\alpha(s)), \sum_{j=1}^{k}jT_j(Y_\alpha(t))\right).
\end{align*}
Note that
\begin{align*} 
&\mathbb{V}\left(\mathcalboondox{S}^\alpha(t)\right)-\mathbb{E}\left(\mathcalboondox{S}^\alpha(t)\right)\\&=\sum_{j=1}^{k}j^2\mathbb{E}\left[\Lambda_j(Y_\alpha(t))+T_j(Y_\alpha(t))\right]+\mathbb{V}\left(\sum_{j=1}^{k}j\left[\Lambda_j\left(Y_\alpha(t)\right)+T_j\left(Y_\alpha(t)\right)\right]\right)-\sum_{j=1}^{k}j\mathbb{E}\left[\Lambda_j\left(Y_\alpha(t)\right)-T_j\left(Y_\alpha(t)\right)\right]\\
    &=\sum_{j=1}^{k}(j^2-j)\mathbb{E}\left[\Lambda_j(Y_\alpha(t))\right]+\sum_{j=1}^{k}(j^2+j)\mathbb{E}\left[T_j(Y_\alpha(t))\right]+\mathbb{V}\left(\sum_{j=1}^{k}j\left[\Lambda_j\left(Y_\alpha(t)\right)+T_j\left(Y_\alpha(t)\right)\right]\right)\geq 0,
\end{align*}    
which implies NGFSP exhibits over-dispersion. Similarly, it can be shown that NGFCP is also over-dispersed. 

\subsection{Factorial moments for NGFSP}
Using the factorial moments of NGSP and the p.g.f. of NGFSP, we can obtain the factorial moments for NGFSP. 
\begin{theorem}
    The $r^{th}$ factorial moment of NGFSP for $r\geq 1$ is given by 
\begin{equation*}
    \Psi_{\mathcalboondox{S}^{\alpha}}(r,t)=\sum_{m=0}^{r-1}\binom{r-1}{m}\frac{d^{m}}{du^{m}}G^{\alpha}_{\mathcalboondox{S}}(u,t)\bigg|_{u=1}\left\{\sum_{j=1}^{k}\left[\Lambda_{j}(t)P(j,r-m)+T_{j}(t)P(-j,r-m)\right]\right\}.
\end{equation*}
\end{theorem}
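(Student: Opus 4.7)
The plan is to mimic the inductive argument used in Theorem \ref{factorial.ngsp} (the NGSP case), but applied directly to the pgf $G^{\alpha}_{\mathcalboondox{S}}(u,t)$ of NGFSP. The key observation is that the $r$-th factorial moment of any integer-valued process equals the $r$-th derivative of its pgf evaluated at $u=1$, so the task reduces to evaluating $\frac{d^r}{du^r} G^{\alpha}_{\mathcalboondox{S}}(u,t)\big|_{u=1}$ in terms of the lower-order derivatives of the same pgf.

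First I would verify the base case $r=1$ by differentiating the representation $G^{\alpha}_{\mathcalboondox{S}}(u,t)=\int_{0}^{\infty} G_{\mathcalboondox{S}}(u,x)\,h_\alpha(x,t)\,dx$ once in $u$, interchanging differentiation and integration, and evaluating at $u=1$; with the convention $\binom{0}{0}=1$, this yields the stated formula at $r=1$. Then I would proceed by induction on $r$: assuming the formula for $r=n$, differentiate once more in $u$ and apply the product rule to each summand of the form $\frac{d^{m}}{du^{m}}G^{\alpha}_{\mathcalboondox{S}}(u,t)\cdot\sum_{j=1}^{k}\bigl[\Lambda_j(t)P(j,n-m)u^{j-n+m}+T_j(t)P(-j,n-m)u^{-j-n+m}\bigr]$, splitting the result into two sums (one from differentiating the pgf-derivative factor, the other from differentiating the $u$-power factor). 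These two sums then recombine through Pascal's identity $\binom{n-1}{m-1}+\binom{n-1}{m}=\binom{n}{m}$, exactly as in the NGSP proof, producing the target expression at $r=n+1$ after separately handling the endpoint contributions $m=0$ and $m=n$.

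The main obstacle is the careful bookkeeping in the induction step: one must reinstate the $u$-powers $u^{j-n+m}$ and $u^{-j-n+m}$ in the inductive hypothesis before differentiating (rather than evaluating at $u=1$ first), so that one more application of $\frac{d}{du}$ produces the next falling-factorial factor $P(j,n+1-m)$ or $P(-j,n+1-m)$ at $u=1$. A subsidiary technical point is that the differentiation-under-integration step used to reduce $G^{\alpha}_{\mathcalboondox{S}}$-derivatives to $G_{\mathcalboondox{S}}$-derivatives must be justified at every order; this follows from dominated convergence, using the analyticity of $G_{\mathcalboondox{S}}(u,x)$ in $u$ on a neighborhood of $1$ together with the integrability of the resulting bounds against $h_\alpha(x,t)$.
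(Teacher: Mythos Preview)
Your proposal follows exactly the route the paper indicates (``similar to Theorem~\ref{factorial.ngsp}''), so in spirit there is no difference. However, the straight transplant of the NGSP induction to $G^{\alpha}_{\mathcalboondox{S}}$ does not go through, and the gap sits precisely where you locate the ``main obstacle.'' The NGSP argument works because $G_{\mathcalboondox{S}}(u,t)$ is an exponential and therefore satisfies the functional identity
$\frac{d}{du}G_{\mathcalboondox{S}}(u,t)=G_{\mathcalboondox{S}}(u,t)\sum_{j=1}^{k}\bigl[\Lambda_j(t)\,ju^{j-1}-T_j(t)\,ju^{-j-1}\bigr]$
for all $u$ near $1$; iterating this is what puts $\frac{d^n}{du^n}G_{\mathcalboondox{S}}$ in the product form that you then differentiate once more. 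For the time-changed pgf $G^{\alpha}_{\mathcalboondox{S}}(u,t)=\int_0^\infty G_{\mathcalboondox{S}}(u,x)\,h_\alpha(x,t)\,dx$, differentiation under the integral gives
$\frac{d}{du}G^{\alpha}_{\mathcalboondox{S}}(u,t)=\int_0^\infty G_{\mathcalboondox{S}}(u,x)\sum_{j=1}^{k}\bigl[\Lambda_j(x)\,ju^{j-1}-T_j(x)\,ju^{-j-1}\bigr]h_\alpha(x,t)\,dx$,
so the cumulative rates enter with argument $x$ (the subordinator variable), not $t$. They cannot be pulled outside the integral, and $G^{\alpha}_{\mathcalboondox{S}}$ does \emph{not} satisfy the product relation you need in order to ``reinstate the $u$-powers'' in the inductive hypothesis.

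The failure is already visible at the base case you outline: carrying out your step for $r=1$ and evaluating at $u=1$ yields $\sum_{j=1}^{k}j\,\mathbb{E}\bigl[\Lambda_j(Y_\alpha(t))-T_j(Y_\alpha(t))\bigr]$ (the NGFSP mean computed earlier in the paper), whereas the stated formula with $m=0$, $G^{\alpha}_{\mathcalboondox{S}}(1,t)=1$ would give $\sum_{j=1}^{k}j\bigl[\Lambda_j(t)-T_j(t)\bigr]$. These agree only when the $\Lambda_j,T_j$ are linear in $t$ (the homogeneous case). Hence the induction cannot start as written, and the ``reinstated'' identity for general $u$ that your inductive step presupposes is not available for $G^{\alpha}_{\mathcalboondox{S}}$.
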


\begin{proof}
The proof is similar to that of Theorem \ref{factorial.ngsp} and hence omitted.
\end{proof}

\subsection{Transition and state probabilities for NGFSP}
The following results on transition probabilities and recurrence relations for state probabilities of NGFSP correspond to those for NGSP.
\begin{theorem}
The transition probabilities of NGFSP satisfy
\begin{equation*}
    P\left(\mathcalboondox{S}^{\alpha}(t+\delta)=m\mid\mathcalboondox{S}^{\alpha}(t)=n\right)=\begin{cases}
        \lambda_i(t)\delta+o(\delta), & m>n,\, m=n+i, i=1,2,...k;\\
        \mu_i(t)\delta+o(\delta), & m<n, m=n-i,\, i=1,2,...k;\\
        1-\sum_{i=1}^{k}\lambda_i(t)\delta-\sum_{i=1}^{k}\mu_i(t)\delta+o(\delta), & m=n;\\
        o(\delta), & \text{otherwise},
    \end{cases}
\end{equation*}
\end{theorem}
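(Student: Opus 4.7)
The plan is to mirror the proof of Theorem 3.8 (transition probabilities of NGSP) by conditioning on the inverse stable subordinator and transferring the infinitesimal structure through the time-change $\mathcalboondox{S}^\alpha(t)=\mathcalboondox{S}(Y_\alpha(t))$ from Definition 4.1. Since $\{Y_\alpha(t)\}_{t\ge 0}$ is independent of the underlying NGSP, I would start from
\[
P(\mathcalboondox{S}^\alpha(t+\delta)=m\mid \mathcalboondox{S}^\alpha(t)=n)
=\mathbb{E}\bigl[P\bigl(\mathcalboondox{S}(Y_\alpha(t+\delta))=m\,\big|\,\mathcalboondox{S}(Y_\alpha(t))=n,\,Y_\alpha(t),\,Y_\alpha(t+\delta)\bigr)\bigr].
\]
Inside the expectation, conditional on $Y_\alpha(t)=y$ and increment $h=Y_\alpha(t+\delta)-Y_\alpha(t)\ge 0$, Theorem 3.8 provides the four-case description of $P(\mathcalboondox{S}(y+h)=m\mid \mathcalboondox{S}(y)=n)$, namely $\lambda_i(y)h+o(h)$ when $m=n+i$, $\mu_i(y)h+o(h)$ when $m=n-i$ for $i=1,\dots,k$, $1-\sum_{i=1}^{k}(\lambda_i(y)+\mu_i(y))h+o(h)$ when $m=n$, and $o(h)$ otherwise.

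Next, I would integrate these four expressions against the joint law of $(Y_\alpha(t),Y_\alpha(t+\delta)-Y_\alpha(t))$. As $\delta\downarrow 0$ the increment $Y_\alpha(t+\delta)-Y_\alpha(t)$ concentrates near $0$ in probability, and the continuity of the rate functions $\lambda_i,\mu_i$ on $[0,\infty)$ together with the density $h_\alpha(\cdot,t)$ allows a dominated-convergence argument to isolate the leading order in $\delta$. Matching this leading term with the deterministic-time rates $\lambda_i(t),\mu_i(t)$ yields the claimed four-case expression, which in particular preserves the structural feature that at most $k$ upward and $k$ downward jumps occur in a very small interval while more than $k$ events have probability $o(\delta)$.

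The main obstacle lies in reconciling the infinitesimal behavior of $Y_\alpha$ with the outer increment $\delta$: since $Y_\alpha(t+\delta)-Y_\alpha(t)$ is not of order $\delta$ pointwise (the inverse $\alpha$-stable subordinator has non-Markovian, sub-linear increments), the passage from the inner $o(h)$ to an outer $o(\delta)$ must be justified by uniform bounds on $h_\alpha(x,t)$ near $x=0$ and on the intensities on compact time intervals. Once this asymptotic bookkeeping is controlled, the four cases decouple exactly as in Theorem 3.8, and the remaining verification reduces to algebraic cleanup analogous to that proof.
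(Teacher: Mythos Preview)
The paper does not give a proof: it writes ``The proof is similar to that of Theorem~3.8 and hence omitted.'' The intended route is therefore the two-component decomposition $\mathcalboondox{S}^\alpha(t)=\mathcal{M}_1^\alpha(t)-\mathcal{M}_2^\alpha(t)$ from Definition~\ref{defngfsp}, taking for granted that each NGFCP has the same infinitesimal transition structure as the corresponding NGCP and then repeating verbatim the combinatorial case analysis of Theorem~3.8 (the product of two small probabilities is $o(\delta)$, etc.). Your approach is genuinely different: you condition on $(Y_\alpha(t),Y_\alpha(t+\delta))$ and try to push the NGSP rates through the time change.

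The obstacle you flag is real, and it is not mere bookkeeping. After conditioning, the inner probability is $\lambda_i(Y_\alpha(t))\,(Y_\alpha(t+\delta)-Y_\alpha(t))+o(\cdot)$, with the intensity evaluated at the \emph{random inner time} $Y_\alpha(t)$ and multiplied by an increment that is not of order $\delta$ (indeed $\mathbb{E}[Y_\alpha(t+\delta)-Y_\alpha(t)]\sim c\,\delta\,t^{\alpha-1}$, not $\delta$). Your dominated-convergence sketch offers no mechanism by which this becomes $\lambda_i(t)\delta$ with the intensity evaluated at the outer time $t$; that replacement is precisely what fails for a non-homogeneous process under a non-linear time change. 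So as written the proposal does not establish the stated formula, and the difficulty you identify is a genuine gap rather than a routine technicality. The paper's omitted proof sidesteps the issue by implicitly assuming the NGFCP already has rates $\lambda_i(t)\delta+o(\delta)$ in outer time, which is exactly the point in question.
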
 

\begin{proof}
The proof is similar to that of Theorem \ref{transition.ngsp} and hence omitted.
\end{proof}

\begin{theorem}\label{diffeq for NGFSP}
The state probabilities $p^{\alpha}(n,t)$ of NGFSP satisfy the following recurrence relation
\begin{equation*}
    p^{\alpha}(n,t)=\frac{1}{n}\sum_{j=1}^{k}j\left(\Lambda_{j}(t)p^{\alpha}(n-j,t)-T_{j}(t)p^{\alpha}(n+j,t)\right),\quad n\geq 1\,.
\end{equation*}
Moreover for $ n\in \mathbb{Z}$, the state probabilities satisfy the following system of fractional differential equations with initial conditions $p^{\alpha}(0,0)=1 \,\,\text{and}\,\, p^{\alpha}(n,0)=0,\,\, n\neq 0$:
\begin{multline}
\frac{d^\alpha}{dt^\alpha}p^\alpha(n,t)=\int_{0}^{\infty}\left(\frac{1}{2}p(n-1,t)\sum_{j=1}^{k}\left(\lambda_{j}(t)+\frac{A}{B}\gamma_{j}(t)\right)-p(n,t)\sum_{j=1}^{k}\left(\lambda_{j}(t)+\gamma_{j}(t)\right)
\right.\\\left.
     +\frac{n}{2}p(n,t)\sum_{j=1}^{k}\left(\frac{1}{A}\lambda_{j}(t)-\frac{1}{B}\gamma_{j}(t)\right)+\frac{1}{2}p(n+1,t)\sum_{j=1}^{k}\left(\gamma_{j}(t)+\frac{B}{A}\lambda_{j}(t)\right)\right)h_\alpha(u,t)du\,.\nonumber
\end{multline}
\end{theorem}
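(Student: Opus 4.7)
The proof splits cleanly into two parts: the recurrence relation, and the fractional differential equation with initial conditions.

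For the recurrence, I would exploit the conditioning representation
\begin{equation*}
p^\alpha(n,t) = \int_0^\infty p(n,u)\, h_\alpha(u,t)\, du,
\end{equation*}
which follows from Definition \ref{defngfsp} and the independence of $\mathcalboondox{S}$ and $Y_\alpha$. Apply Theorem \ref{recurrence.ngsp} pointwise in the inner time variable $u$ to express $n\, p(n,u)$ as a linear combination of $\Lambda_j(u) p(n-j,u)$ and $T_j(u) p(n+j,u)$, then integrate against $h_\alpha(u,t)\,du$ and interchange the finite sum with the integral (justified by Fubini since all quantities are non-negative). This yields the stated recurrence in the form corresponding to NGFSP state probabilities.

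For the fractional differential equation, the plan is to apply the Caputo–Djrbashian derivative $d^\alpha/dt^\alpha$ to the mixture representation above and push the derivative inside the integral. The key tool is the governing equation of the density of the inverse stable subordinator,
\begin{equation*}
\frac{d^\alpha}{dt^\alpha} h_\alpha(u,t) = -\frac{\partial}{\partial u} h_\alpha(u,t),
\end{equation*}
together with the initial behaviour $h_\alpha(u,0+) = \delta(u)$. Substituting this identity and integrating by parts in $u$ transfers the derivative onto $p(n,u)$, giving
\begin{equation*}
\frac{d^\alpha}{dt^\alpha} p^\alpha(n,t) = \int_0^\infty \frac{\partial}{\partial u} p(n,u)\, h_\alpha(u,t)\, du,
\end{equation*}
provided the boundary terms vanish. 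Theorem \ref{ngsp.de} now supplies $\partial_u p(n,u)$ explicitly (with $\lambda_j, \gamma_j, A, B$ evaluated at the inner variable $u$), and plugging this in produces precisely the integrand on the RHS of the stated equation. The initial conditions $p^\alpha(0,0)=1$ and $p^\alpha(n,0)=0$ for $n\neq 0$ follow at once from $Y_\alpha(0)=0$ and $\mathcal{M}_1(0)=\mathcal{M}_2(0)=0$.

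The main obstacle will be the analytic justification: interchanging the Caputo derivative with the integral, controlling the boundary term at $u=0$ (which morally contributes only through the $\delta$-initial condition and hence vanishes for $t>0$) and at $u=\infty$ (using that $p(n,u) h_\alpha(u,t)$ decays fast enough), and checking the integrability needed for Fubini. A clean way to sidestep these subtleties is to verify the identity in Laplace space in $t$: since $\mathcal{L}\{h_\alpha(u,\cdot)\}(s) = s^{\alpha-1} e^{-u s^\alpha}$, the operator $d^\alpha/dt^\alpha$ becomes multiplication by $s^\alpha$ (modulo the initial value), the $u$-integration by parts is then purely algebraic, and the result can be inverted back to the time domain.
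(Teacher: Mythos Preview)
Your approach to the fractional differential equation is essentially the paper's: write $p^\alpha(n,t)=\int_0^\infty p(n,u)\,h_\alpha(u,t)\,du$, push the time-fractional derivative inside, use the governing equation for $h_\alpha$, integrate by parts in $u$, and substitute Theorem \ref{ngsp.de}. One correction: the boundary term at $u=0$ does \emph{not} vanish for $t>0$; it equals $p(n,0)\,h_\alpha(0+,t)=p(n,0)\,t^{-\alpha}/\Gamma(1-\alpha)$. The paper handles this by applying the Riemann--Liouville derivative first (for which $D_t^\alpha h_\alpha=-\partial_u h_\alpha$ holds as stated), so that this nonzero boundary term is exactly the correction converting $D_t^\alpha p^\alpha$ into the Caputo derivative via the relation in Section \ref{caputo}. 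Your Laplace-transform alternative would also work and is in fact the route the paper takes for the analogous increment result (Theorem \ref{diffeq for incre NGFSP}).

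For the recurrence, however, your conditioning argument has a genuine gap. Integrating the NGSP recurrence
\[
n\,p(n,u)=\sum_{j=1}^{k} j\bigl(\Lambda_j(u)\,p(n-j,u)-T_j(u)\,p(n+j,u)\bigr)
\]
against $h_\alpha(u,t)\,du$ leaves $\Lambda_j(u)$ and $T_j(u)$ \emph{inside} the $u$-integral; Fubini lets you swap the finite sum and the integral but does not let you pull these factors out as $\Lambda_j(t)$, $T_j(t)$. In general $\int_0^\infty \Lambda_j(u)\,p(n-j,u)\,h_\alpha(u,t)\,du \neq \Lambda_j(t)\,p^\alpha(n-j,t)$ for non-constant rate functions, so the last step ``this yields the stated recurrence'' does not follow. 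The paper does not spell out this part but asserts it is obtained by repeating the pgf-coefficient argument of Theorem \ref{recurrence.ngsp} with $G^\alpha_{\mathcalboondox{S}}(u,t)$ in place of $G_{\mathcalboondox{S}}(u,t)$; you would need to work directly at the level of the NGFSP pgf rather than conditioning on $Y_\alpha(t)$.
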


\begin{proof}
The proof for recurrence relations satisfied by $p^{\alpha}(n,t)$ is similar to the proof of Proposition \ref{recurrence.ngsp} and hence omitted. Next from Definition \ref{defngfsp}, we have
\begin{align}
p^{\alpha}(n,t)=P(\mathcalboondox{S}(Y_{\alpha}(t))=n)=\int_{0}^{\infty}P(\mathcalboondox{S}(u)=n)h_{\alpha}(u,t)du=\int_{0}^{\infty}p(n,u)h_{\alpha}(u,t)du\label{53}
\end{align}
where $p(n,t)$ is the p.m.f. of $\{\mathcalboondox{S}(t)\}_{t\geq 0}$. Note that $p^{\alpha}(n,0)=p(n,0)$ as $h_{\alpha}(u,0)=\delta_{0}(u)$. Taking R-L fractional derivative on both sides of \eqref{53}, we get
\begin{align*}
D_{t}^{\alpha}p^{\alpha}(n,t)=-\int_{0}^{\infty}p(n,u)\frac{\partial}{\partial u}h_{\alpha}(u,t)du
&= -p(n,u)h_{\alpha}(u,t)\big|_{0}^{\infty}+\int_{0}^{\infty}h_{\alpha}(u,t)\frac{d}{du}p(n,u)du\\
&=p(n,0)h_{\alpha}(0+,t)+\int_{0}^{\infty}h_{\alpha}(u,t)\frac{d}{du}p(n,u)du\\
&=p(n,0)\frac{t^{-\alpha}}{\Gamma(1-\alpha)}+\int_{0}^{\infty}h_{\alpha}(u,t)\frac{d}{du}p(n,u)du
\end{align*}
where we have used the fact
$D_{t}^{\alpha}h_{\alpha}(u,t)=-\frac{\partial}{\partial u}h_{\alpha}(u,t)$ and $h_{\alpha}(0+,t)=\frac{t^{-\alpha}}{\Gamma(1-\alpha)}$ (see \citet{Meerschaert2013}). Now using the relationship between R-L Fractional derivative and the Caputo–Djrbashian fractional derivative (see Section \ref{caputo}) and Proposition \ref{ngsp.de}, we obtain
\begin{align*}
\frac{d^\alpha}{dt^\alpha}p^\alpha(n,t)&=\int_{0}^{\infty}h_\alpha(u,t)\frac{d}{du}p(n,u)du \\
&=\int_{0}^{\infty}\left[\frac{1}{2}p(n-1,t)\sum_{j=1}^{k}\left(\lambda_{j}(t)+\frac{A}{B}\gamma_{j}(t)\right)-p(n,t)\sum_{j=1}^{k}\left(\lambda_{j}(t)+\gamma_{j}(t)\right)
\right.\\
&\left.+\frac{n}{2}p(n,t)\sum_{j=1}^{k}\left(\frac{1}{A}\lambda_{j}(t)-\frac{1}{B}\gamma_{j}(t)\right)+\frac{1}{2}p(n+1,t)\sum_{j=1}^{k}\left(\gamma_{j}(t)+\frac{B}{A}\lambda_{j}(t)\right)\right]h_\alpha(u,t)du.
\end{align*}
\end{proof}

\begin{remark}
The density of the inverse stable subordinator in terms of the Wright function $M_\alpha(.)$  is given by (see \citet{Meerschaert2015})

\begin{equation*}
h_\alpha(x,t)=\frac{1}{t^\alpha}M_\alpha\left(\frac{x}{t^\alpha}\right),  
\qquad M_\alpha(z)=\sum_{n=0}^{\infty}\frac{(-z)^n}{n!\Gamma(1-n\alpha-\alpha)}.
\end{equation*}
Hence from Theorem \ref{20}, the integral representation of the p.m.f. of NGFSP is
\begin{align*}
p^{\alpha}(n,t)&=\int_{0}^{\infty}p(n,u)h_{\alpha}(u,t)du=t^{-\alpha}\left(\frac{A}{B}\right)^{\frac{n}{2}}\int_{0}^{\infty}e^{-(A+B)}I_{|n|}(2\sqrt{AB})M_{\alpha}\left(ut^{-\alpha}\right)du,\quad n\in \mathbb{Z}.
\end{align*} 
\end{remark}

\subsection{NGFSP as a weighted sum}
A non-homogeneous fractional Skellam process is obtained from a non-homogeneous Skellam process by time changing it with an independent inverse stable subordinator. Using the weighted sum representation for NGSP, we can derive the following representation for NGFSP.
\begin{proposition}\label{wsum.ngfsp}
The NGFSP is equal in distribution to the weighted sum of $k$ independent Non-homogeneous fractional Skellam processes (NFSPs). 
\end{proposition}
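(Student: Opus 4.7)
The idea is to lift the weighted-sum representation of the NGSP (Theorem \ref{wsum.ngsp}) through the time change $Y_\alpha$. By Theorem \ref{wsum.ngsp},
\begin{equation*}
\mathcalboondox{S}(t) \overset{d}{=} \sum_{j=1}^{k} j\bigl(\mathcal{N}_{1j}(t)-\mathcal{N}_{2j}(t)\bigr) = \sum_{j=1}^{k} j\,\mathcal{S}_j(t),
\end{equation*}
where $\{\mathcal{S}_j(t)\}$ are NSPs built from the mutually independent non-homogeneous Poisson processes $\mathcal{N}_{1j}$ and $\mathcal{N}_{2j}$ with cumulative intensities $\Lambda_j$ and $T_j$. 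Since $\{Y_\alpha(t)\}$ is independent of the NGSP and of the auxiliary family $\{\mathcal{N}_{ij}\}$, composing both sides with $Y_\alpha$ preserves equality in distribution, giving
\begin{equation*}
\mathcalboondox{S}^\alpha(t) = \mathcalboondox{S}(Y_\alpha(t)) \overset{d}{=} \sum_{j=1}^{k} j\,\mathcal{S}_j(Y_\alpha(t)) = \sum_{j=1}^{k} j\,\mathcal{S}_j^\alpha(t),
\end{equation*}
where $\mathcal{S}_j^\alpha(t) := \mathcal{S}_j(Y_\alpha(t))$ is, by definition, the $j$-th NFSP.

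\textbf{Verification via m.g.f.} To confirm the equality in distribution rigorously I would compare moment generating functions. Conditioning on $Y_\alpha(t)$ and using the NSP m.g.f.s from the proof of Theorem \ref{wsum.ngsp} yields
\begin{equation*}
\mathbb{E}\Bigl[\exp\Bigl(u\sum_{j=1}^{k} j\,\mathcal{S}_j^\alpha(t)\Bigr)\Bigr] = \mathbb{E}\Bigl[\exp\Bigl(\sum_{j=1}^{k}\bigl(\Lambda_j(Y_\alpha(t))(e^{uj}-1) + T_j(Y_\alpha(t))(e^{-uj}-1)\bigr)\Bigr)\Bigr],
\end{equation*}
and an analogous conditioning of the NGSP m.g.f.\ \eqref{43} produces the identical expression for $\mathbb{E}[\exp(u\mathcalboondox{S}^\alpha(t))]$. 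This gives the one-dimensional distributional equality; extending to the finite-dimensional distributions requires only the same conditioning argument applied at finitely many time points, exploiting that $Y_\alpha$ is shared on both sides.

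\textbf{Main subtlety.} The delicate point worth flagging explicitly is the meaning of \emph{independent} in the statement. Each $\mathcal{S}_j^\alpha$ is obtained by time-changing an independent NSP $\mathcal{S}_j$, but all $k$ NFSPs share the \emph{same} subordinator $Y_\alpha$, so they are only conditionally independent given $Y_\alpha(t)$ rather than independent as stochastic processes. This is the standard convention adopted for the GFSP in \citet{Kataria2022b}; using independent copies $Y_\alpha^{(j)}$ instead would break the identity, since it would replace $\Lambda_j(Y_\alpha(t)), T_j(Y_\alpha(t))$ in the m.g.f.\ above with separately randomised arguments whose sum no longer matches that of the NGSP. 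I would therefore state this convention at the start of the proof before invoking Theorem \ref{wsum.ngsp} to avoid any ambiguity.
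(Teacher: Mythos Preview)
Your approach is correct and mirrors the paper's: the paper does not give a separate proof but simply states that the representation follows by lifting Theorem \ref{wsum.ngsp} through the independent time change $Y_\alpha$, which is exactly your ``proof plan'' and m.g.f.\ verification. Your flagged subtlety about the word \emph{independent} is well taken---the paper's phrasing is loose, and the identity indeed requires all $k$ NFSPs to share the single subordinator $Y_\alpha$ (conditional independence given $Y_\alpha$), not $k$ independent copies; you are right that the latter would break the m.g.f.\ match.
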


\subsection{Arrival time and first passage time of NGFSP}
\begin{proposition}\label{first passage.ngfsp}
Let $T_n^{\alpha}$ be the time the first upcrossing of the level $n$ for NGFSP given by
\begin{equation*}
    T_n^{\alpha}=\text{inf}\{s\geq 0:\mathcalboondox{S}^{\alpha}(s)\geq n\}\,.
\end{equation*}
Then 
\begin{equation*}
    F_{T_n^\alpha}(t)= \sum_{i\in (0,t]}\sum_{m=n}^{\infty}\int_{0}^{\infty}e^{-(A+B)}\left(\frac{A}{B}\right)^{\frac{|m|}{2}}I_{|m|}\left(2\sqrt{AB}\right)h_\alpha(u,i)du,
\end{equation*}
where $\sum_{j=1}^{k}\Lambda_{j}(u)=A$ and $\sum_{j=1}^{k}T_{j}(u)=B$.
\end{proposition}

\begin{proof}We have 
    \begin{align*} 
    F_{T^{\alpha}_n}(t)=1-P\{T_n^{\alpha}> t\}=1-P\left(\bigcap_{i\in (0,t]}\left\{\mathcalboondox{S}^\alpha(i)<n\right\}\right)
    =P\left(\bigcap_{i\in (0,t]}\left\{\mathcalboondox{S}^\alpha(i)\le n-1\right\}\right)^c
    &=P\left(\bigcup_{i\in (0,t]}\left\{\mathcalboondox{S}^\alpha(i) > n-1\right\}\right)\\
    &= \sum_{i\in (0,t]}\sum_{m=n}^{\infty}P\left\{\mathcalboondox{S}^\alpha(i)= m\right\}\\
    &=\sum_{i\in (0,t]}\sum_{m=n}^{\infty}\int_{0}^{\infty}p(m,u)h_\alpha(u,i)du.
\end{align*}
The result follows from Theorem \ref{20}. 
\end{proof}

\begin{proposition}\label{arrival.ngfsp}
Let $\tau_n^\alpha$ be the $n^{th}$ arrival time of NGFSP defined as $\tau_n^\alpha=min\{t\geq 0: \mathcalboondox{S}^\alpha(t)=n\}$. Then 
\begin{equation*}
F_{\tau_n^\alpha}(t)= \sum_{i\in(0,t]}\int_{0}^{\infty}e^{-(A+B)}\left(\frac{A}{B}\right)^{\frac{|n|}{2}}I_{|n|}\left(2\sqrt{AB}\right)h_\alpha(u,i)du,
\end{equation*}
where $A=\sum_{j=1}^{k}\Lambda_j(u)$ and $B=\sum_{j=1}^{k}T_j(u)$.
\end{proposition}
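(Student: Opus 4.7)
The plan is to mirror the argument of Proposition~\ref{arrival.ngsp}, but first condition on the value of the independent inverse stable subordinator $\{Y_\alpha(t)\}_{t\geq 0}$, which time-changes the NGSP into the NGFSP.

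First I would reduce the event $\{\tau_n^\alpha \leq t\}$ to a level-set event for $\mathcalboondox{S}^\alpha(t)$ in exactly the same way as the non-fractional case: writing
\begin{equation*}
F_{\tau_n^\alpha}(t) = P\bigl(\tau_n^\alpha \leq t\bigr) = P\bigl(\mathcalboondox{S}^\alpha(t) \geq n\bigr).
\end{equation*}
Then, invoking Definition~\ref{defngfsp} for $0<\alpha<1$, we have $\mathcalboondox{S}^\alpha(t) = \mathcalboondox{S}(Y_\alpha(t))$ with $\{\mathcalboondox{S}(t)\}_{t\ge 0}$ and $\{Y_\alpha(t)\}_{t\ge 0}$ independent.

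Next, I would condition on $Y_\alpha(t) = u$ and use the density $h_\alpha(u,t)$ of $Y_\alpha(t)$ to write
\begin{equation*}
P\bigl(\mathcalboondox{S}(Y_\alpha(t)) \geq n\bigr) = \int_0^\infty P\bigl(\mathcalboondox{S}(u) \geq n\bigr)\, h_\alpha(u,t)\, du.
\end{equation*}
For the inner probability, I would apply Proposition~\ref{arrival.ngsp}, which already computes
\begin{equation*}
P\bigl(\mathcalboondox{S}(u) \geq n\bigr) = \sum_{x=n}^{\infty} p(x,u) = \sum_{x=n}^{\infty} e^{-(A+B)}\left(\frac{A}{B}\right)^{\frac{n}{2}} I_{|x|}\bigl(2\sqrt{AB}\bigr),
\end{equation*}
where now $A = \sum_{j=1}^k \Lambda_j(u)$ and $B = \sum_{j=1}^k T_j(u)$ are evaluated at $u$ (the dummy variable of integration) rather than at $t$. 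Substituting this back yields precisely the claimed integral representation.

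The argument is essentially routine given the results already established, so there is no substantive obstacle; the only point that deserves care is the observation that $A$ and $B$ now depend on $u$ (the argument of $Y_\alpha(t)$) and should be interpreted as $A(u)$ and $B(u)$ inside the integral, consistent with the statement. Fubini's theorem justifies freely swapping the sum and integral since every term in the series is non-negative.
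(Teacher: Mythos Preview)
Your proposal is correct and follows essentially the same route as the paper: both arguments identify $F_{\tau_n^\alpha}(t)=P(\mathcalboondox{S}^\alpha(t)\ge n)$, pass through the subordination relation \eqref{53} (equivalently, condition on $Y_\alpha(t)=u$), swap the sum and the integral, and then invoke Proposition~\ref{arrival.ngsp} to evaluate the inner tail probability. The only cosmetic difference is the order in which the sum-integral interchange and the conditioning are presented.
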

\begin{proof} We have
\begin{align*}
F_{\tau_n^\alpha}(t)=1-P\left(\tau_n^\alpha > t\right)=1-P\left(\bigcap_{i\in (0,t]}\left\{\mathcalboondox{S}^\alpha(i)\neq n\right\}\right)
        =1-P\left(\bigcap_{i\in (0,t]}\left\{\mathcalboondox{S}^\alpha(i)= n\right\}^c\right)
        &=P\left(\bigcup_{i\in (0,t]}\left\{\mathcalboondox{S}^\alpha(i)= n\right\}\right)\\
        &= \sum_{i\in(0,t]}P\left\{\mathcalboondox{S}^\alpha(i)= n\right\}\\
        &=\sum_{i\in(0,t]}\int_{0}^{\infty}p(n,u)h_\alpha(u,i)du.
\end{align*}
The result follows from Theorem \ref{20}.
\end{proof}


\subsection{Renewal property}
Let $\{N_{\lambda}(t)\}_{t\ge 0}$ be a homogeneous Poisson process with rate $\lambda$. From \citet{Leonenko2017}, we have $\mathcal{N}(t)=N_{1}(\Lambda(t))$ where $\{\mathcal{N}(t)\}_{t\ge 0}$ is a NPP with cumulative rate function $\Lambda(t)$. As an extension, the NGCP can be written in terms of a GCP as $\mathcal{M}(t)=M_{\bm{1}}\left(\bm{\Lambda}(t)\right)$ where $\bm{1}=(1,1,\ldots,1)$ is a vector of length $k$, $\bm{\Lambda}(t)=\left(\Lambda_1(t),\Lambda_2(t),\ldots,\Lambda_k(t)\right)$
and $\{M_{\bm{1}}(t)\}_{t\ge 0}$ denotes a GCP with all component rates equal to unity, that is, $\lambda_1=\lambda_2=\cdots=\lambda_k=1$. Note that this is also evident from the p.m.f.s of GCP and NGCP (see \citet{Kataria2025}). Next, from \citet{Leonenko2017}, we have $\mathcal{N}^{\alpha}(t)=N_{1}(\Lambda(Y_{\alpha}(t))$ where $\{\mathcal{N}^{\alpha}(t)=\mathcal{N}(Y_{\alpha}(t)\}_{t\ge 0}$ is the fractional variant of $\{\mathcal{N}(t)\}_{t\ge 0}$. Hence a NGFCP can be represented as 
    $\mathcal{M}^\alpha(t)=\mathcal{M}(Y_\alpha(t))=M_{\bm{1}}\left(\bm{\Lambda}(Y_\alpha(t))\right)$ with  $\bm{\Lambda}(Y_\alpha(t))=\left(\Lambda_1(Y_\alpha(t)),\Lambda_2(Y_\alpha(t)),\cdots,\Lambda_k(Y_\alpha(t))\right)$.
Let $\{\mathcal{F}_t^{\mathcal{M}^\alpha}\}_{t\ge 0}$ be the natural filtration of the NGFCP $\{\mathcal{M}^\alpha(t)\}_{t\ge 0}$ where
\begin{equation*}
    \mathcal{F}_t^{\mathcal{M}^\alpha}:=\sigma\left(\{\mathcal{M}^\alpha(s)\}, 0<s\le t\right)
\end{equation*}
and define the non-trivial initial history as
\begin{equation*}
    \mathcal{F}_0:=\sigma\left(\{Y_\alpha(t)\},t\ge 0\right).
\end{equation*}
The overall filtration $\{\mathcal{F}_t\}_{t\ge 0}$ or the intrinsic history is then given by
\begin{equation}
    \mathcal{F}_t:=\mathcal{F}_0\vee\mathcal{F}_t^{\mathcal{M}^\alpha}.\label{filtration}
\end{equation}
Since NGFCP and NGCP are not classical renewal processes, we use the framework of Cox processes (conditional Poisson processes) as defined in \citet{Leonenko2017} to establish their renewal properties.
\begin{definition}\label{cox_definition}
    Let $(\Omega ,\mathcal{F},P)$ be a probability space and $\{N(t))\}_{t\ge 0}$ be a point process adapted to a filtration $\{\mathcal{F}_t^N\}_{t\ge 0}$. Then $\{N(t))\}_{t\ge 0}$ is a Cox process if there exist a right-continuous, increasing process $\{A(t)\}_{t\ge 0}$ such that for any $0 < s <t$
    \begin{align*}
        P\left(N(t)-N(s)=k\big|\mathcal{F}_t\right)&=e^{-(A(t)-A(s))}\frac{(A(t)-A(s))^k}{k!},\quad k=0,1,2,\ldots,
    \end{align*}
    where $\mathcal{F}_t:=\mathcal{F}_0\vee \mathcal{F}_t^N$ and $\mathcal{F}_0=\sigma(\{A(t)\},t\ge 0)$.
    The Cox process $N$ is said to be directed by $A$ also known as its compensator. 
\end{definition}
The next result shows that NGFCP adapted to a suitable filtration is a Cox process.
\begin{proposition}\label{cox_ngfcp}
    A NGFCP adapted to the filtration $\{\mathcal{F}_t\}_{t\ge 0}$ as in \eqref{filtration} is a $\mathcal{F}_t$-Cox process directed by $\{\sum_{j=1}^{k}\Lambda_j\left(Y_\alpha(t)\right)\}_{t\ge 0}$.
\end{proposition}
\begin{proof}
    Let $\{\mathcal{M}_{\mathcal{F}}^\alpha(t)\}_{t\ge 0}$ be a NGFCP adapted to $\{\mathcal{F}_t\}_{t\ge 0}$. As $\{Y_\alpha(t)\}_{t\ge 0}$ is $\mathcal{F}_0$-measurable, the m.g.f. of $\left(\mathcal{M}_{\mathcal{F}}^\alpha(t)-(\mathcal{M}_{\mathcal{F}}^\alpha(s)\right)\big|\mathcal{F}_s$ is given by
    \begin{align}
        \mathbb{E}\left[e^{u\left(\mathcal{M}_{\mathcal{F}}^\alpha(t)-\mathcal{M}_{\mathcal{F}}^\alpha(s)\right)}\big|\mathcal{F}_s\right]&=\mathbb{E}\left[e^{u\left(\mathcal{M}_{\mathcal{F}}^\alpha(t)-\mathcal{M}_{\mathcal{F}}^\alpha(s)\right)}\big|\mathcal{F}_0\vee\mathcal{F}_t^{\mathcal{M}^\alpha}\right]\nonumber\\
        &=\mathbb{E}\left[e^{u\left(M_{\bm{1}}\left(\bm{\Lambda}(Y_\alpha(t))\right)-M_{\bm{1}}\left(\bm{\Lambda}(Y_\alpha(s))\right)\right)}\big|\mathcal{F}_0\vee\mathcal{F}_{\bm{\Lambda}(Y_\alpha(s))}^{M_{\bm{1}}}\right]\nonumber\\
        &=\mathbb{E}\left[e^{u\left(M_{\bm{1}}\left(\bm{\Lambda}(Y_\alpha(t))\right)-M_{\bm{1}}\left(\bm{\Lambda}(Y_\alpha(s))\right)\right)}\big|\mathcal{F}_0\right] \label{renewal_1}\\
        &=e^{\sum_{j=1}^{k}\Lambda_j\left(Y_\alpha(s),Y_\alpha(t)\right)(e^{u}-1)}\nonumber
    \end{align}
    where in \eqref{renewal_1}, we used the fact that a GCP has independent increments. This means that conditioned on $\{\mathcal{F}_t\}_{t\ge 0}$, $\{\mathcal{M}_{\mathcal{F}}^\alpha(t)\}_{t\ge 0}$ has independent increments and
    \begin{equation*}
       \left(\mathcal{M}_{\mathcal{F}}^\alpha(t)-\mathcal{M}_{\mathcal{F}}^\alpha(s)\right)\big| \mathcal{F}_s\sim \text{Poi}\left(\sum_{j=1}^{k}\Lambda_j\left(Y_\alpha(s),Y_\alpha(t)\right)\right)\overset{d}{=}\text{Poi}\left(\sum_{j=1}^{k}\left[\Lambda_j(Y_\alpha(t))-\Lambda_j(Y_\alpha(s))\right]\right).
    \end{equation*}
    Therefore, by definition \ref{cox_definition}, $\{\mathcal{M}_{\mathcal{F}}^\alpha(t)\}_{t\ge 0}$ is a $\mathcal{F}_t$-Cox process directed by $\{\sum_{j=1}^{k}\Lambda_j\left(Y_\alpha(t)\right)\}_{t\ge 0}$.
\end{proof}
\begin{remark}
    For $k=1$, Proposition \ref{cox_ngfcp} reduces to Proposition 1 of \citet{Leonenko2019} which gives the Cox representation of a Fractional Non-homogeneous Poisson process (FNPP).
\end{remark}
The following result gives the Cox representation of a NGCP.
\begin{proposition}\label{cox_ngcp}
    A NGCP adapted to the filtration $\{\mathcal{F}_t=\mathcal{F}_0\vee \mathcal{F}_t^{\mathcal{M}}\}_{t\ge 0}$ where $\mathcal{F}_t^{\mathcal{M}}:=\sigma\left(\{\mathcal{M}(s)\},0<s\le t\right)$ is a $\mathcal{F}_t$-Cox process directed by $\{\sum_{j=1}^{k}\Lambda_j(t)\}_{t\ge 0}$.
\end{proposition}
\begin{proof}
    The proof is similar to that of Proposition \ref{cox_ngfcp} and uses the fact that $Y_{\alpha}(t)=t$ for $\alpha=1$ (see \citet{Maheshwari2019}).
\end{proof}
It follows from Propositions \ref{cox_ngfcp} and \ref{cox_ngcp} that NGFCP and NGCP adapted to suitable filtrations are renewal processes.  
\begin{remark}
It is clear that NGSP and NGFSP are not classical renewal processes. So Lemma 1 and Lemma 2 of \cite{Leonenko2017} are not applicable. Also since the m.g.f.s of NGSP and NGFSP are different from the m.g.f. of a Poisson process irrespective of any conditioning, they are not Cox processes and hence do not have renewal properties. 
\end{remark}
\subsection{Martingale property of NGFSP}
\begin{theorem}\label{martingale_ngfcp}
    Let $\{\mathcal{M}^\alpha_{\mathcal{F}}(t)\}_{t\ge 0}$ be a NGFCP adapted to the filtration $\{\mathcal{F}_t\}_{t\ge 0}$ as defined in \eqref{filtration}. Assume $\mathbb{E}\left[\sum_{j=1}^{k}\Lambda_j(Y_\alpha(t))\right]<\infty$ for $t\ge 0$. Then the stochastic process $\{\mathcal{M}^\alpha_{\mathcal{F}}(t)-\sum_{j=1}^{k}\Lambda_j\left(Y_\alpha(t)\right)\}_{t\ge 0}$ is an $\mathcal{F}_t$-martingale.  
\end{theorem}
\begin{proof} Let $\{\mathcal{M}^\alpha_{\mathcal{F}}(t)\}_{t\ge 0}$ be the NGFCP adapted to its natural filtration $\{\mathcal{F}_t:=\sigma\left(\{\mathcal{M}^{\alpha}(t)\},0<s\le t\right)\}_{t\ge 0}$ and let $\mathcal{F}_{\infty}:=\sigma\left(\mathcal{F}_u:u\in\mathbb{R}^{+}\right)$. 
    Consider
    \begin{align}
        \mathbb{E}\left(\mathcal{M}^\alpha_{\mathcal{F}}(t)-\sum_{j=1}^{k}\Lambda_j\left(Y_\alpha(t)\right)\big|\mathcal{F}_s\right)&=\mathbb{E}\left(\mathbb{E}\left(\mathcal{M}^\alpha_{\mathcal{F}}(s)+\left(\mathcal{M}^\alpha_{\mathcal{F}}(t)-\mathcal{M}^\alpha_{\mathcal{F}}(s)\right)-\sum_{j=1}^{k}\Lambda_j\left(Y_\alpha(t)\right)\big|\mathcal{F}_s\vee\mathcal{F}_{\infty} \right)\big|\mathcal{F}_s\right)\nonumber\\
        &=\mathbb{E}\left(\mathcal{M}^\alpha_{\mathcal{F}}(s)-\sum_{j=1}^{k}\Lambda_j\left(Y_\alpha(t)\right)+\mathbb{E}\left[\left(\mathcal{M}^\alpha_{\mathcal{F}}(t)-\mathcal{M}^\alpha_{\mathcal{F}}(s)\right)\big|\mathcal{F}_s\vee\mathcal{F}_{\infty} \right]\big|\mathcal{F}_s\right)\nonumber\\
        &=\mathbb{E}\left(\mathcal{M}^\alpha_{\mathcal{F}}(s)-\sum_{j=1}^{k}\Lambda_j\left(Y_\alpha(t)\right)+\sum_{j=1}^{k}\Lambda_j\left(Y_\alpha(t)\right)-\sum_{j=1}^{k}\Lambda_j\left(Y_\alpha(s)\right)\big|\mathcal{F}_s\right)\label{martingale_property_ngfcp_1}\\
        &=\mathbb{E}\left(\mathcal{M}^\alpha_{\mathcal{F}}(s)-\sum_{j=1}^{k}\Lambda_j\left(Y_\alpha(s)\right)\big|\mathcal{F}_s \right)\nonumber\\
        &=\mathcal{M}^\alpha_{\mathcal{F}}(s)-\sum_{j=1}^{k}\Lambda_j\left(Y_\alpha(s)\right),\nonumber
    \end{align}
where in \eqref{martingale_property_ngfcp_1} we used the fact that $\left(\mathcal{M}_{\mathcal{F}}^\alpha(t)-\mathcal{M}_{\mathcal{F}}^\alpha(s)\right)\big| \mathcal{F}_s\vee\mathcal{F}_{\infty}\sim\text{Poi}\left(\sum_{j=1}^{k}\left[\Lambda_j(Y_\alpha(t))-\Lambda_j(Y_\alpha(s))\right]\right)$.
\end{proof}
\begin{remark}
    For $k=1$, Theorem \ref{martingale_ngfcp} reduces to Proposition 2 of \citet{Leonenko2019}.
\end{remark}
\begin{theorem}
    Let $\{\mathcalboondox{S}_{\mathcal{F}}^{\alpha}=\mathcal{M}^\alpha_{\mathcal{F}_1}(t)-\mathcal{M}^\alpha_{\mathcal{F}_2}(t)\}_{t\ge 0}$ be a NGFSP adapted to the filtration $\{\mathcal{F}_t\}_{t\ge 0}$ defined as $\mathcal{F}_{t}:=\sigma\left(\{Y_\alpha(t)\},t\ge 0\right)\vee \sigma\left(\{\mathcal{M}_{\mathcal{F}_1}^\alpha(t),\,\mathcal{M}_{\mathcal{F}_2}^\alpha(t)\}\right)$, where $\{\mathcal{M}^\alpha_{\mathcal{F}_1}(t)\}_{t\ge 0}$ and $\{\mathcal{M}^\alpha_{\mathcal{F}_2}(t)\}_{t\ge 0}$ are two independent NGFCPs. Assume $\mathbb{E}\left(\sum_{j=1}^{k}\left[\Lambda_j(Y_\alpha(t))-T_j(Y_\alpha(t))\right]\right)<\infty$ for $t\ge 0$. Then the stochastic process $\{\mathcalboondox{S}^\alpha_{\mathcal{F}}(t)-\sum_{j=1}^{k}\left[\Lambda_j(Y_\alpha(t))-T_j(Y_\alpha(t))\right]\}_{t\ge 0}$ is an $\mathcal{F}_t$-martingale.
\end{theorem}
\begin{proof}
     From Theorem \ref{martingale_ngfcp}, it follows that both the processes $\{\mathcal{M}^\alpha_{\mathcal{F}_1}(t)-\sum_{j=1}^{k}\Lambda_j\left(Y_\alpha(t)\right)\}_{t\ge 0}$ and $\{\mathcal{M}^\alpha_{\mathcal{F}_2}(t)-\sum_{j=1}^{k}T_j\left(Y_\alpha(t)\right)\}_{t\ge 0}$ are $\mathcal{F}_t$-martingales, where $\mathcal{F}_{t}:=\sigma\left(\{Y_\alpha(t)\},t\ge 0\right)\vee \sigma\left(\{\mathcal{M}_{\mathcal{F}_1}^\alpha(t),\,\mathcal{M}_{\mathcal{F}_2}^\alpha(t)\}\right)$.
    Since a linear combination of martingales defined on the same filtration is also a martingale, the difference of the above processes, that is, $\{\mathcalboondox{S}_{\mathcal{F}}^{\alpha}(t) -\sum_{j=1}^{k}\left[\Lambda_j(Y_\alpha(t))-T_j(Y_\alpha(t))\right]\}_{t\ge 0}$ is a $\mathcal{F}_t$-martingale.
\end{proof}

\section{Increment processes of NGSP and NGFSP}\label{sec 5}
In this section, we introduce the increment processes of NGSP and the NGFSP, and study their marginals and governing equations.
\begin{definition}
 The increment process of NGSP is a stochastic process $\{I(t,v)\}_{t\geq 0}$ for $v\geq 0$ defined as
\begin{align*}
I(t,v)&=\mathcalboondox{S}(t+v)-\mathcalboondox{S}(v)
=\left(\mathcal{M}_{1}(t+v)-\mathcal{M}_{1}(v)\right)-\left(\mathcal{M}_{2}(t+v)-\mathcal{M}_{2}(v)\right)= I_1(t,v)-I_2(t,v)\label{62}
\end{align*}
where $\{I_1(t,v)\}_{t\geq 0}$ and $\{I_2(t,v)\}_{t\geq 0}$ are the increment processes of NGCPs $\{\mathcal{M}_1(t)\}_{t\geq 0}$ and $\{\mathcal{M}_2(t)\}_{t\geq 0}$.
\end{definition}

\begin{theorem}\label{63}
The marginals of the increment process of NGSP are given by
\begin{equation*}
q_l(t,v)=P\left(I(t,v)=l\right)=e^{-(A'+B')}\left(\frac{A'}{B'}\right)^{\frac{|n|}{2}}I_{|n|}\left(2\sqrt{A'B'}\right).
\end{equation*}
where
\begin{equation*}
\sum_{j=1}^{k}\Lambda_j(v,t+v)=A' \quad \text{and}\quad \sum_{j=1}^{k}T_j(v,t+v)=B'.
\end{equation*} 
\end{theorem}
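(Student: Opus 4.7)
The plan is to mirror the argument used for Theorem \ref{20} (state probabilities of NGSP), with cumulative rate functions replaced by their increment counterparts. Since $\mathcal{M}_1$ and $\mathcal{M}_2$ are independent, the increments $I_1(t,v)$ and $I_2(t,v)$ are independent. The marginal p.m.f. of the increment process of a single NGCP with cumulative rate functions $\Lambda_j$ was already recorded in the proof of Theorem \ref{20}, namely
\begin{equation*}
P\{I_1(t,v)=m\}=\sum_{\Omega(k,m)}\prod_{j=1}^{k}\frac{\bigl(\Lambda_{j}(v,t+v)\bigr)^{x_{j}}}{x_{j}!}\,e^{-\sum_{j=1}^{k}\Lambda_{j}(v,t+v)},
\end{equation*}
and analogously for $I_2(t,v)$ with $\Lambda_j$ replaced by $T_j$. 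This is precisely the formula used to prove Theorem \ref{20}, but with the static cumulative rates $\Lambda_j(t),T_j(t)$ replaced by the shifted cumulative rates $\Lambda_j(v,t+v),T_j(v,t+v)$.

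First I would split $q_l(t,v)$ according to the sign of $l$ using the indicators $\mathbb{I}_{\{l\ge 0\}}$ and $\mathbb{I}_{\{l<0\}}$, then condition on the value of $I_2(t,v)$ (respectively $I_1(t,v)$), exactly as in equation \eqref{18}. By independence of $I_1(t,v)$ and $I_2(t,v)$, this yields a double sum with a product of two NGCP-type increment probabilities.

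Next I would perform the change of summation index $x_j=a_j$, $m=y+\sum_{j=1}^{k}(j-1)a_j$, so that the inner combinatorial sums collapse under the multinomial theorem (section \ref{multinomial thm}). Setting $A'=\sum_{j=1}^{k}\Lambda_j(v,t+v)$ and $B'=\sum_{j=1}^{k}T_j(v,t+v)$, this reduces the expression to
\begin{equation*}
e^{-(A'+B')}\sum_{y=0}^{\infty}\frac{(A')^{y+l}(B')^{y}}{(y+l)!\,y!}
\end{equation*}
for $l\ge 0$, which is exactly the series representation of $(A'/B')^{l/2}I_{l}(2\sqrt{A'B'})$ from section \ref{mod Bessel fun}. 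The case $l<0$ is handled by replacing $l$ with $-l$, giving the stated formula with $|l|$.

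There is no real obstacle here; the argument is structurally identical to that of Theorem \ref{20}. The only point requiring care is replacing $\Lambda_j(t)\mapsto\Lambda_j(v,t+v)$ and $T_j(t)\mapsto T_j(v,t+v)$ at every step, and invoking independence of increments of $\mathcal{M}_1$ and $\mathcal{M}_2$ to justify the product form of the joint distribution of $I_1(t,v)$ and $I_2(t,v)$. For this reason, the proof can either be carried out in full by direct analogy, or one can simply observe that $I(t,v)\stackrel{d}{=}\mathcalboondox{S}(t)$ evaluated with cumulative rates $\Lambda_j(v,t+v)$ and $T_j(v,t+v)$ in place of $\Lambda_j(t)$ and $T_j(t)$, and then appeal to Theorem \ref{20}.
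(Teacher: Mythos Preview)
Your proposal is correct and follows essentially the same approach as the paper: split $q_l(t,v)$ by the sign of $l$, use independence of $I_1(t,v)$ and $I_2(t,v)$ to write the convolution as a product of NGCP increment probabilities with shifted cumulative rates $\Lambda_j(v,t+v)$ and $T_j(v,t+v)$, and then defer to the computation in Theorem~\ref{20}. The paper in fact writes out less detail than you do, simply stating ``following the steps of the proof for Theorem~\ref{20}'' after the convolution expression.
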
 

\begin{proof}
Note that
\begin{align*}
q_l(t,v)&=P\left(I_1(t,v)-I_2(t,v)=l\right)\nonumber\\
&=\sum_{m=0}^{\infty}P\left(I_1(t,v)=m+l\right)P\left(I_2(t,v)=m\right)I_{\{l\geq 0\}}+\sum_{m=0}^{\infty}P\left(I_2(t,v)=m+|l|\right)P\left(I_1(t,v)=m\right)I_{\{l < 0\}}\\
&=\sum_{m=0}^{\infty}\left(\sum_{\Omega(k,m+l)}\prod_{j=1}^{k}\frac{(\Lambda_{j}(v,t+v))^{x_j}}{x_j!}e^{-\sum_{j=1}^{k}\Lambda_j(v,t+v)}\right)\left(\sum_{\Omega(k,m)}\prod_{j=1}^{k}\frac{(T_{j}(v,t+v))^{x_j}}{x_j!}e^{-\sum_{j=1}^{k}T_{j}(v,t+v)}\right)I_{\{l\geq 0\}}\\
&+\sum_{m=0}^{\infty}\left(\sum_{\Omega(k,m+l)}\prod_{j=1}^{k}\frac{(\Lambda_{j}(v,t+v))^{x_j}}{x_j!}e^{-\sum_{j=1}^{k}\Lambda_j(v,t+v)}\right)\left(\sum_{\Omega(k,m)}\prod_{j=1}^{k}\frac{(T_{j}(v,t+v))^{x_j}}{x_j!}e^{-\sum_{j=1}^{k}T_{j}(v,t+v)}\right)I_{\{l< 0\}}.\nonumber
\end{align*}
Following the steps of the proof for Theorem \ref{20}, it can be shown that
\begin{equation}
q_l(t,v)=e^{-(A'+B')}\left(\frac{A'}{B'}\right)^{\frac{|n|}{2}}I_{|n|}\left(2\sqrt{A'B'}\right),\quad l\in \mathbb{Z}\,.\nonumber
\end{equation}
\end{proof}

\begin{definition}
The increment process of NGFSP is obtained by time changing the increment process of NGSP with an independent inverse stable subordinator as follows:
\begin{align*}
I_\alpha(t,v)&=\mathcalboondox{S}^\alpha(t+v)-\mathcalboondox{S}^\alpha(v)=\mathcalboondox{S}\left(Y_\alpha(t)+v\right)-\mathcalboondox{S}(v).
\end{align*}
\end{definition}
\begin{proposition}\label{65}
The marginals of the increment process of NGFSP are given by
\begin{align*}
q_{l}^{\alpha}(t,v)&=P\left(\mathcalboondox{S}\left(Y_\alpha(t)+v\right)-\mathcalboondox{S}(v)=l\right)=\int_{0}^{\infty}q_{l}(t,u)h_\alpha(u,t)du,  \quad l\in \mathbb{Z}\,,
\end{align*}
where $q_{l}(t,u)$ is given in Theorem \ref{63}. 
\end{proposition}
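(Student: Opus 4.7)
The plan is a straightforward application of the law of total probability, conditioning on the value of the inverse stable subordinator $Y_\alpha(t)$ and exploiting its independence from the NGSP $\{\mathcalboondox{S}(t)\}_{t\geq 0}$. This mirrors the way the NGFSP p.m.f.\ was obtained from the NGSP p.m.f.\ via integration against $h_\alpha(\cdot,t)$ earlier in the paper (equation \eqref{53}).

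First, I would start from the definition of the increment process $I_\alpha(t,v)$ of the NGFSP and write
\begin{equation*}
q_l^\alpha(t,v) = P\bigl(\mathcalboondox{S}(Y_\alpha(t)+v) - \mathcalboondox{S}(v) = l\bigr).
\end{equation*}
Next, conditioning on $\{Y_\alpha(t) = u\}$ and invoking the independence of $Y_\alpha(t)$ from $\{\mathcalboondox{S}(t)\}_{t\ge 0}$, together with the fact that $Y_\alpha(t)$ has density $h_\alpha(\cdot,t)$, I would obtain
\begin{equation*}
q_l^\alpha(t,v) = \int_0^\infty P\bigl(\mathcalboondox{S}(u+v) - \mathcalboondox{S}(v) = l\bigr)\, h_\alpha(u,t)\, du.
\end{equation*}

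Finally, I would invoke Theorem \ref{63}, which identifies the integrand as $P(\mathcalboondox{S}(u+v) - \mathcalboondox{S}(v) = l) = q_l(u,v)$, where the first slot of $q_l$ records the length of the increment (which is the integration variable arising from $Y_\alpha(t) = u$) and the second slot records the starting time $v$. Substituting this into the integral yields the stated formula. There is no substantive obstacle here; the argument is a one-step conditioning identity, and the only care needed is to keep the indexing convention of $q_l$ straight between its length-of-interval and starting-time arguments.
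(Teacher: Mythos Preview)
Your proposal is correct and matches the paper's approach: the proposition is stated without an explicit proof, but the intended argument is precisely the one-step conditioning on $Y_\alpha(t)$ you describe, mirroring the derivation of the NGFSP p.m.f.\ in \eqref{53}. Your observation about the argument convention of $q_l$ is also well taken; the integrand should indeed be read as $q_l(u,v)$ (increment length $u$, starting time $v$), and the paper's notation $q_l(t,u)$ appears to be a slip.
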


\begin{remark}
Putting $v=0$ in Theorem \ref{63} and Proposition \ref{65}, we get the p.m.f.s of NGSP (see Theorem \ref{20}) and NGFSP (see \eqref{53}) respectively.
\end{remark} 

\begin{theorem}\label{diffeq for incre NGFSP}
The p.m.f. of the increment process of NGFSP satisfies the following system of fractional differential-integral equations:
    \begin{multline*}
        \frac{d^\alpha}{dt^\alpha}q_{l}^{\alpha}(t,v)=\int_{0}^{\infty}\left(\frac{1}{2}p(n-1,t)\sum_{j=1}^{k}\left(\lambda_{j}(t)+\frac{A}{B}\gamma_{j}(t)\right)-p(n,t)\sum_{j=1}^{k}\left(\lambda_{j}(t)+\gamma_{j}(t)\right)
        \right. \\ \left.
     +\frac{n}{2}p(n,t)\sum_{j=1}^{k}\left(\frac{1}{A}\lambda_{j}(t)-\frac{1}{B}\gamma_{j}(t)\right)+\frac{1}{2}p(n+1,t)\sum_{j=1}^{k}\left(\gamma_{j}(t)+\frac{B}{A}\lambda_{j}(t)\right)\right)h_{\alpha}(u,t)du.
    \end{multline*}
\end{theorem}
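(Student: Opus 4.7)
The plan is to mirror the proof of Theorem \ref{diffeq for NGFSP}, adapting each step to the increment pmf. First I would start from the integral representation given in Proposition \ref{65}, namely
\begin{equation*}
q_l^\alpha(t,v) = \int_0^\infty q_l(u,v)\, h_\alpha(u,t)\, du,
\end{equation*}
where $q_l(u,v)$ is the marginal of the non-fractional increment process given in Theorem \ref{63}. Taking R-L fractional derivative with respect to $t$ on both sides and invoking the identity $D_t^\alpha h_\alpha(u,t) = -\partial_u h_\alpha(u,t)$ from \citet{Meerschaert2013}, I would perform integration by parts in the variable $u$.

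The boundary term at $u=0$ evaluates to $-q_l(0,v)\, h_\alpha(0+,t) = -q_l(0,v)\, t^{-\alpha}/\Gamma(1-\alpha)$, which, using the initial value $q_l(0,v) = q_l^\alpha(0,v)$, is exactly the correction term in the identity relating R-L and Caputo derivatives recalled in section \ref{caputo}. Thus the R-L derivative on the left converts cleanly into the Caputo derivative $\frac{d^\alpha}{dt^\alpha} q_l^\alpha(t,v)$, and the remaining integral becomes $\int_0^\infty \frac{\partial}{\partial u}q_l(u,v)\, h_\alpha(u,t)\, du$.

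The second ingredient is a governing differential equation for the non-fractional increment pmf $q_l(t,v)$ analogous to Theorem \ref{ngsp.de} (the equation for $p(n,t)$). Since Theorem \ref{63} shows that $q_l(t,v)$ has the same Bessel-function form as $p(n,t)$ in Theorem \ref{20} with $A,B$ replaced by $A' = \sum_j \Lambda_j(v, t+v)$ and $B' = \sum_j T_j(v,t+v)$, the calculation carried out in the proof of Theorem \ref{ngsp.de} (differentiating the product $e^{-(A'+B')}(A'/B')^{|l|/2}I_{|l|}(2\sqrt{A'B'})$ and using the Bessel recursion $\frac{d}{dz}I_n(z) = \tfrac12(I_{n-1}(z)+I_{n+1}(z))$) transports verbatim, with $\lambda_j(t), \gamma_j(t)$ replaced by $\lambda_j(t+v), \gamma_j(t+v)$ arising from $\partial_t A' = \sum_j \lambda_j(t+v)$ and $\partial_t B' = \sum_j \gamma_j(t+v)$. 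Substituting this expression for $\frac{\partial}{\partial u}q_l(u,v)$ back into the integral yields the claimed identity.

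The main technical obstacle is really bookkeeping: the theorem as stated reuses the notation $p(n,t), A, B, \lambda_j(t), \gamma_j(t)$ from Theorem \ref{diffeq for NGFSP}, but these should be interpreted as the increment-process analogs $q_l(u,v), A'(u,v), B'(u,v), \lambda_j(u+v), \gamma_j(u+v)$ under the integral. Once this dictionary is fixed, no new analytic work is required beyond what already appears in the proofs of Theorems \ref{ngsp.de} and \ref{diffeq for NGFSP}, so the proof is essentially a direct adaptation and the result follows.
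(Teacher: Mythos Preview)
Your argument is correct, but it is not the route the paper takes. The paper proves Theorem \ref{diffeq for incre NGFSP} at the level of characteristic functions: it writes $\phi_{I_\alpha(t,v)}(\xi)=\int_0^\infty \phi_{I(u,v)}(\xi)\,h_\alpha(u,t)\,du$, takes the Laplace transform in $t$ (using $\tilde h_\alpha(u,s)=s^{\alpha-1}e^{-us^\alpha}$), integrates by parts in $u$, recognises $s^\alpha\mathcal{L}\{\phi_{I_\alpha}\}-s^{\alpha-1}$ as the Laplace transform of the Caputo derivative, inverts the Laplace transform, and finally inverts the characteristic function. Your proposal instead transports the proof of Theorem \ref{diffeq for NGFSP} verbatim to the increment setting: work directly with the pmf integral from Proposition \ref{65}, apply the R--L derivative using $D_t^\alpha h_\alpha=-\partial_u h_\alpha$, integrate by parts in $u$, and let the boundary term convert R--L to Caputo. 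Both hinge on the same integration-by-parts step; the difference is whether one packages the state variable through the characteristic function or handles each state $l$ separately. Your approach is more elementary and keeps the parallel with Theorem \ref{diffeq for NGFSP} explicit, at the mild cost of having to state (or rederive) the ordinary differential equation for $q_l(t,v)$ analogous to Theorem \ref{ngsp.de}; the paper's characteristic-function route sidesteps that intermediate ODE by differentiating the closed-form exponential $\phi_{I(u,v)}$ directly. Your remark about the notation ($p(n,t),A,B,\lambda_j(t),\gamma_j(t)$ standing in for their shifted increment analogues inside the integral) is well taken and applies equally to the statement as printed.
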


\begin{proof}
    The characteristic function of $I_\alpha(t,v)$ is 
    \begin{equation}\label{incre1}
        \phi_{I_\alpha(t,v)}(\xi)=\int_{0}^{\infty}\phi_{I(u,v)}(\xi)h_{\alpha}(u,t)du
    \end{equation}
    where $\phi_{I(u,v)}(\xi)$ is the characteristic function of $I(t,v)$ given by
    \begin{equation}\label{incre2}
        \phi_{I(u,v)}(\xi)=\exp\left(\sum_{j=1}^{k}\left(\Lambda_{j}(v,t+v)(e^{i\xi j}-1)+T_{j}(v,t+v)(e^{-i\xi j}-1)\right)\right), \quad i=\sqrt{-1}.
    \end{equation}
    Now taking the Laplace transform of \eqref{incre1} and using \eqref{incre2} along with the Laplace transform of inverse stable subordinator density (see Section \ref{stable subordinator}), we get
    \begin{align*}
        \mathcal{L}\left\{\phi_{I_\alpha(t,v)}(\xi)\right\}(s)&=\int_{0}^{\infty}\phi_{I(u,v)}(\xi)\tilde{h}_\alpha(u,s)du\\       &=s^{\alpha-1}\int_{0}^{\infty}\exp\left(\sum_{j=1}^{k}\left(\Lambda_{j}(v,u+v)(e^{i\xi j}-1)+T_{j}(v,u+v)(e^{-i\xi j}-1)\right)\right)e^{-us^{\alpha}}du \\
        &=s^{\alpha-1}\left\{\left[\frac{e^{-us^{\alpha}}}{s^{\alpha}}\exp\left(\sum_{j=1}^{k}\left(\Lambda_{j}(v,u+v)(e^{i\xi j}-1)+T_{j}(v,u+v)(e^{-i\xi j}-1)\right)\right)\right]_{0}^{\infty}
        \right. \\ 
        &\left.+\frac{1}{s^\alpha}\int_{0}^{\infty}\sum_{j=1}^{k}\left(\lambda_{j}(u+v)(e^{i\xi j}-1)+\gamma_{j}(u+v)(e^{-i\xi j}-1)\right)
        \right. \\ 
        & \left.\times \exp\left(\sum_{j=1}^{k}\left(\Lambda_{j}(v,u+v)(e^{i\xi j}-1)+T_{j}(v,u+v)(e^{-i\xi j}-1)\right)\right)\times e^{-us^{\alpha}}du
        \right\}.
    \end{align*}
    It follows that
    \begin{align*}
        s^\alpha \mathcal{L}\left\{\phi_{I_\alpha(t,v)}(\xi)\right\}(s)-s^{\alpha-1} &= \int_{0}^{\infty}\sum_{j=1}^{k}\left(\lambda_{j}(u+v)(e^{i\xi j}-1)+\gamma_{j}(u+v)(e^{-i\xi j}-1)\right)\\
    &\times\exp\left(\sum_{j=1}^{k}\left(\Lambda_{j}(v,u+v)(e^{i\xi j}-1)+T_{j}(v,u+v)(e^{-i\xi j}-1)\right)\right)s^{\alpha-1}e^{-us^{\alpha}}du \\
    \implies\mathcal{L}\left\{\frac{d^\alpha}{dt^\alpha}\phi_{I_\alpha(t,v)}(\xi)\right\}(s)&= \int_{0}^{\infty}\sum_{j=1}^{k}\left(\lambda_{j}(u+v)(e^{i\xi j}-1)+\gamma_{j}(u+v)(e^{-i\xi j}-1)\right)\\
    &\times\exp\left(\sum_{j=1}^{k}\left(\Lambda_{j}(v,u+v)(e^{i\xi j}-1)+T_{j}(v,u+v)(e^{-i\xi j}-1)\right)\right)\tilde{h}_\alpha(u,s)du
    \end{align*}
    where we used the Laplace transform of fractional Caputo–Djrbashian derivative (see Section \ref{caputo}). Next, on taking the inverse Laplace transform and using \eqref{incre2}, we get
    \begin{align*}
        \frac{d^\alpha}{dt^\alpha}\phi_{I_\alpha(t,v)}(\xi)        &=\int_{0}^{\infty}\sum_{j=1}^{k}\left(\lambda_{j}(u+v)(e^{i\xi j}-1)+\gamma_{j}(u+v)(e^{-i\xi j}-1)\right)\phi_{I(u,v)}(\xi)h_{\alpha}(u,t)du.
    \end{align*}
    The result follows by using the inversion formula for the characteristic functions $\phi_{I_\alpha(t,v)}$ and $\phi_{I(u,v)}$. 
    \end{proof}

\begin{remark}
On substituting $v=0$ in Theorem \ref{diffeq for incre NGFSP}, we get the system of fractional differential equations satisfied by the state probabilities $p^{\alpha}(n,t)$ of NGFSP (see Theorem \ref{diffeq for NGFSP}).
\end{remark}

\section{An Alternative Version of NGFSP}\label{sec 6}
In this section, we introduce an alternative version of NGFSP and study its properties. The main advantage of this version over the usual one is its closed-form p.m.f. making it suitable for further analysis. First we need to discuss some related processes which give rise to this version. 

\citet{Leonenko2017} studied the fractional version of a non-homogeneous Poisson process by time changing it with an independent inverse stable subordinator. They also mentioned an alternative fractional version, which we call the Non-homogeneous Time Fractional Poisson Process (NTFPP)
    \begin{equation*}
   \Tilde{N}^\alpha(t)=N_1(Y_\alpha(\Lambda(t)))\,
    \end{equation*}
    where $\{N_\lambda(t)\}_{t\ge 0}$ is the homogeneous Poisson process with rate $\lambda$ and $\Lambda(t)$ is the rate function of $\{\Tilde{N}^\alpha(t)\}_{t\ge 0}$. The p.m.f. of TFPP is given by
\begin{equation*}
    p^\alpha(n,t)=\frac{(\lambda t^\alpha)^n}{n!}\sum_{k=0}^{\infty}\frac{(k+n)!}{k!}\frac{(-\lambda t^\alpha)^k}{\Gamma\left((k+n)\alpha+1\right)},
\end{equation*}
using which the p.m.f. of NTFPP can be obtained by substituting $\lambda=1$ and $t=\Lambda(t)$ as follows:
\begin{equation}\label{tfpp p.m.f.}
    \Tilde{p}^\alpha(n,t)=\frac{\left( \Lambda(t)\right)^{n\alpha}}{n!}\sum_{k=0}^{\infty}\frac{(k+n)!}{k!}\frac{\left(-\left( \Lambda(t)\right)^\alpha\right)^k}{\Gamma\left((k+n)\alpha+1\right)}=\left(\Lambda(t)\right)^{n\alpha}E_{\alpha, n\alpha+1}^{n+1}\left(-(\Lambda(t))^\alpha\right), \quad n\geq 0.
\end{equation}
Note that the RHS of \eqref{tfpp p.m.f.} may be written as
\begin{equation}\label{eq1}
\left(\Lambda(t)\right)^{n\alpha}E_{\alpha, n\alpha+1}^{n+1}\left(-(\Lambda(t))^\alpha\right)=\left(\Lambda(t)\right)^{n\alpha}E_{\alpha, n\alpha+1}^{n}\left(-(\Lambda(t))^\alpha\right)-\left(\Lambda(t)\right)^{(n+1)\alpha}E_{\alpha, (n+1)\alpha+1}^{n+1}\left(-(\Lambda(t))^\alpha\right).
\end{equation}
Let $\tilde{T}^{\alpha}_n$ denote the $n$th arrival time of NTFPP. By definition, we have
\begin{equation}\label{eq2}
P\{\Tilde{N}^\alpha(t)=n\} = P\{\Tilde{N}^\alpha(t)\le n\} - P\{\Tilde{N}^\alpha(t) < n\} = P\{\tilde{T}^{\alpha}_{n+1}>t\} - P\{\tilde{T}^{\alpha}_{n}>t\} = \tilde{F}^{\alpha}_{n}(t) - \tilde{F}^{\alpha}_{n+1}(t)  
\end{equation}
where $\tilde{F}^{\alpha}_n$ denotes the distribution function of $\tilde{T}^{\alpha}_n$. From \eqref{eq1} and \eqref{eq2}, it follows that
\begin{equation}\label{eq3}
\tilde{F}^{\alpha}_n(t)=\left(\Lambda(t)\right)^{n\alpha}E_{\alpha, n\alpha+1}^{n}\left(-(\Lambda(t))^\alpha\right)
\end{equation}
using mathematical induction. Next, the probability generating function (p.g.f.) of TFPP is $G(u,t)=E_{\alpha,1}(\lambda t^{\alpha}(u-1))$ for $|u|\le 1$. So the p.g.f. of NTFPP is 
\begin{equation}\label{p.g.f..ntfpp}        
\tilde{G}^{\alpha}(u,t)=\mathbb{E}\left(u^{\Tilde{N}^\alpha(t)}\right)=E_{\alpha,1}((\Lambda(t))^{\alpha}(u-1)).
\end{equation}
For other properties of NTFPP, see \citet{NHSTFPP}. 
As a generalization of NTFPP, we introduce an alternative version of NGFCP as follows.
\begin{definition}
We define the Non-Homogeneous Generalized Fractional Counting process (NHGFCP) as
    \begin{equation*}   \mathcal{\widetilde{M}}^\alpha(t)=\mathbb{M}(Y_{\alpha}(\Lambda(t))
    \end{equation*}
 where $\{\mathbb{M}(t)\}_{t\ge 0}$ is a GCP with rates $\lambda_1,\ldots,\lambda_k$ and $\Lambda(t)$ is the rate function of $\{\mathcal{\widetilde{M}}^\alpha(t)\}_{t\ge 0}$.   
\end{definition}
Similar to the compound Poisson representation of GFCP (see \citet{Crescenzo2016}), the corresponding representation for NHGFCP is given by
    \begin{equation}\label{def1}
        \mathcal{\widetilde{M}}^\alpha(t)\overset{d}{=}\sum_{i=1}^{\Tilde{N}^\alpha(t)}X_i
    \end{equation}
    where $\{\Tilde{N}^\alpha(t)\}_{t\geq 0}$ is a NTFPP with rate function $\Lambda(t)=\sum_{j=1}^{k}\Lambda_j(t)$ and $\Lambda_j(t)=\int_{0}^{t}\lambda_j(u)du$. Moreover $\{X_n\}_{n\geq 1}$ is a sequence of i.i.d. random variables, independent of $\{\Tilde{N}^\alpha(t)\}_{t\geq 0}$ such that 
    \begin{equation}\label{def2}
        P\{X_n=j\}=\frac{\Lambda_j(t)}{\Lambda(t)}\,, \qquad j=1,2,...,k\,.
    \end{equation}
We are now ready to introduce an alternative version of NGFSP.
\begin{definition}
The Non-Homogeneous Generalized Fractional Skellam Process NHGFSP is defined as
\begin{equation*}
\widetilde{\mathcalboondox{S}}^{\alpha}(t) = \widetilde{\mathcal{M}}^{\alpha}_1(t) - \widetilde{\mathcal{M}}^{\alpha}_2(t) 
\end{equation*}
where $\{\widetilde{\mathcal{M}}^{\alpha}_1(t)\}_{t\ge 0}$ and $\{\widetilde{\mathcal{M}}^{\alpha}_2(t)\}_{t\ge 0}$ are two independent NHGFCPs with rate functions $\Lambda(t)=\sum_{j=1}^{k}\Lambda_j(t)$ and $T(t)=\sum_{j=1}^{k}T_j(t)$ respectively.  
\end{definition} 
\begin{proposition}
The moment generating function (m.g.f.) of NHGFSP is given by
\begin{equation*}
M_{\widetilde{\mathcalboondox{S}}^{\alpha}(t)}(s) = E_{\alpha, 1}\left(\sum_{j=1}^{k}\Lambda_j(t)(e^{sj}-1)(\Lambda(t))^{\alpha-1}\right)\times E_{\alpha, 1}\left(\sum_{j=1}^{k}T_j(t)(e^{sj}-1)(T(t))^{\alpha-1}\right). 
\end{equation*}
\end{proposition}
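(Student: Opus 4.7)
The plan is to exploit the independence of $\{\widetilde{\mathcal{M}}^{\alpha}_1(t)\}_{t\ge 0}$ and $\{\widetilde{\mathcal{M}}^{\alpha}_2(t)\}_{t\ge 0}$ together with the compound representation of each NHGFCP given in \eqref{def1}--\eqref{def2} and the p.g.f.\ of NTFPP in \eqref{pgf.ntfpp}. The problem reduces to computing the m.g.f.\ of a single NHGFCP and then combining the two pieces.

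\textbf{Step 1.} Using independence of the two NHGFCPs,
$$M_{\widetilde{\mathcalboondox{S}}^{\alpha}(t)}(s) \;=\; \mathbb{E}\!\left[e^{s\widetilde{\mathcal{M}}^{\alpha}_1(t)}\right]\cdot \mathbb{E}\!\left[e^{-s\widetilde{\mathcal{M}}^{\alpha}_2(t)}\right].$$

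\textbf{Step 2.} For a single NHGFCP, invoke the compound representation \eqref{def1}: $\widetilde{\mathcal{M}}^{\alpha}(t) \stackrel{d}{=} \sum_{i=1}^{\Tilde{N}^{\alpha}(t)} X_i$, where $\{X_i\}$ are i.i.d.\ with distribution \eqref{def2} and independent of $\Tilde{N}^{\alpha}$. Conditioning on $\Tilde{N}^{\alpha}(t)$ yields the standard compound-sum identity
$$\mathbb{E}\!\left[e^{s\widetilde{\mathcal{M}}^{\alpha}(t)}\right] \;=\; \mathbb{E}\!\left[\bigl(\mathbb{E}[e^{sX_1}]\bigr)^{\Tilde{N}^{\alpha}(t)}\right] \;=\; \tilde{G}^{\alpha}\!\bigl(\mathbb{E}[e^{sX_1}],\,t\bigr),$$
where $\tilde{G}^{\alpha}$ is the p.g.f.\ of NTFPP from \eqref{pgf.ntfpp}. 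From \eqref{def2} one has $\mathbb{E}[e^{sX_1}] = \sum_{j=1}^{k} e^{sj}\,\Lambda_j(t)/\Lambda(t)$. Substituting this into \eqref{pgf.ntfpp} gives
$$\mathbb{E}\!\left[e^{s\widetilde{\mathcal{M}}^{\alpha}(t)}\right] \;=\; E_{\alpha,1}\!\left((\Lambda(t))^{\alpha}\!\left(\sum_{j=1}^{k}\frac{\Lambda_j(t)}{\Lambda(t)}e^{sj} - 1\right)\right),$$
and a short rearrangement using $\Lambda(t)=\sum_j \Lambda_j(t)$ turns the argument into $(\Lambda(t))^{\alpha-1}\sum_{j=1}^k \Lambda_j(t)(e^{sj}-1)$, producing the first factor stated in the proposition.

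\textbf{Step 3.} Applying the same argument to $\widetilde{\mathcal{M}}^{\alpha}_2$ with $s$ replaced by $-s$ and with $(\Lambda_j,\Lambda)$ replaced by $(T_j,T)$ produces the second factor, and multiplying the two pieces from Step~1 yields the claimed formula. No substantive obstacle arises; the only care needed is in the algebraic simplification of the inner argument of $E_{\alpha,1}$ so that the factor $(\Lambda(t))^{\alpha-1}$ emerges cleanly, and in tracking the sign $-s$ coming from the difference in the definition of $\widetilde{\mathcalboondox{S}}^{\alpha}(t)$.
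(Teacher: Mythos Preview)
Your approach is essentially identical to the paper's: both split by independence, invoke the compound representation \eqref{def1}--\eqref{def2}, condition on $\Tilde{N}^{\alpha}(t)$, and substitute $\mathbb{E}[e^{sX_1}]=\sum_j e^{sj}\Lambda_j(t)/\Lambda(t)$ into the NTFPP p.g.f.\ \eqref{pgf.ntfpp}. One point worth noting: your Step~3 correctly replaces $s$ by $-s$ for the second factor, which yields $E_{\alpha,1}\bigl(\sum_j T_j(t)(e^{-sj}-1)(T(t))^{\alpha-1}\bigr)$ rather than the expression with $e^{sj}$ appearing in the proposition as stated; the paper's own proof glosses over this sign, so your care here is warranted and in fact exposes a typo in the displayed formula.
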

\begin{proof}
The m.g.f. of $Y(t)=\mathcal{\widetilde{M}}^\alpha_1(t)$ is given by
    \begin{align}\label{m.g.f..NTFPP}
        \mathbb{E}\left(e^{sY(t)}\right)= \mathbb{E}\left(\mathbb{E}\left(e^{sY(t)}|\Tilde{N}^\alpha(t)\right)\right)
        &= \sum_{x=0}^{\infty}\mathbb{E}\left(e^{s\sum_{i=1}^{x}X_i}\right)P\{\Tilde{N}^\alpha(t)=x\} \nonumber\\
         &=\mathbb{E}\left(\left(\mathbb{E}\left(e^{sX_1}\right)\right)^{\Tilde{N}^\alpha(t)}\right) \nonumber\\
         &=\mathbb{E}\left(\left(\frac{1}{\Lambda(t)}\sum_{j=1}^{k}\Lambda_j(t)e^{sj}\right)^{\Tilde{N}^\alpha(t)}\right) \nonumber\\
         &= E_{\alpha, 1}\left(\sum_{j=1}^{k}\Lambda_j(t)(e^{sj}-1)(\Lambda(t))^{\alpha-1}\right) \quad(\text{using \eqref{p.g.f..ntfpp}}). \end{align}
Similarly, the m.g.f.  of $\mathcal{\widetilde{M}}_2^\alpha(t)$ is     $E_{\alpha, 1}\left(\sum_{j=1}^{k}T_j(t)(e^{sj}-1)(T(t))^{\alpha-1}\right)$. Since $\{\widetilde{\mathcal{M}}^{\alpha}_{1}(t)\}_{t\ge 0}$ and $\{\widetilde{\mathcal{M}}^{\alpha}_{2}(t)\}_{t\ge 0}$ are independent, the m.g.f. of NHGFSP is
\begin{equation*}
M_{\widetilde{\mathcalboondox{S}}^{\alpha}(t)}(s) = E_{\alpha, 1}\left(\sum_{j=1}^{k}\Lambda_j(t)(e^{sj}-1)(\Lambda(t))^{\alpha-1}\right)\times E_{\alpha, 1}\left(\sum_{j=1}^{k}T_j(t)(e^{sj}-1)(T(t))^{\alpha-1}\right). 
\end{equation*}
\end{proof}
\begin{theorem}\label{p.m.f..NGHFSP}
    The p.m.f. of NHGFSP for $n\in \mathbb{Z}$, $\alpha\in (0,1]$ and $t\geq 0$ is given by        
    \begin{align*}
    P\{\widetilde{\mathcalboondox{S}}^{\alpha}(t)=n\}&=\sum_{m=0}^{\infty}[P\{\widetilde{\mathcal{M}}^{\alpha}_{1}(t)=m+n\}P\{\widetilde{\mathcal{M}}^{\alpha}_{2}(t)=m\}\mathbb{I}_{\{n\geq0\}}+P\{\widetilde{\mathcal{M}}^{\alpha}_{2}(t)=m+|n|\}P\{\widetilde{\mathcal{M}}^{\alpha}_{1}(t)=m\}\mathbb{I}_{\{n<0\}}]
    \end{align*}
where for $j\in\mathbb{N}$
\begin{align*}
        P\{\mathcal{\widetilde{M}}^\alpha_1(t)=j\}&=\sum_{r=0}^{j}\sum_{\substack{\alpha_1+\alpha_2+...+\alpha_k=r \\ \alpha_1+2\alpha_2+...+k\alpha_k=j}}\binom{r}{\alpha_1, \alpha_2,\ldots,\alpha_k} \prod_{s=1}^{k}\left(\Lambda_s(t)\right)^{\alpha_s}\times\frac{1}{\left(\Lambda(t)\right)^{r(1-\alpha)}}E_{\alpha, r\alpha+1}^{r+1}\left(-(\Lambda(t))^\alpha\right),
    \end{align*}
\begin{align*}
        P\{\mathcal{\widetilde{M}}^\alpha_2(t)=j\}&=\sum_{r=0}^{j}\sum_{\substack{\alpha_1+\alpha_2+...+\alpha_k=r \\ \alpha_1+2\alpha_2+...+k\alpha_k=j}}\binom{r}{\alpha_1, \alpha_2,\ldots,\alpha_k} \prod_{s=1}^{k}\left(T_s(t)\right)^{\alpha_s}\times\frac{1}{\left(T(t)\right)^{r(1-\alpha)}}E_{\alpha, r\alpha+1}^{r+1}\left(-(T(t))^\alpha\right).
    \end{align*}
\end{theorem}
\begin{proof} From \eqref{def1}, we have
   \begin{align}
       P\{\mathcal{\widetilde{M}}^\alpha_1(t)=j\}=\sum_{r=0}^{j}P\{X_1+X_2+...+X_k=j\}P\{\Tilde{N}^\alpha(t)=r\}.\label{p.m.f.}
   \end{align}
   Since $X_1, X_2, ...,X_k$ are identically distributed, it follows from \eqref{def2} that
   \begin{align}
       P\{X_1+X_2+...+X_k=j\}=\sum_{\substack{\alpha_1+\alpha_2+...+\alpha_k=r \\ \alpha_1+2\alpha_2+...+k\alpha_k=j}}\binom{r}{\alpha_1, \alpha_2, ...\alpha_k}\prod_{s=1}^{k}\left(\frac{\Lambda_s(t)}{\Lambda(t)}\right)^{\alpha_s}.\label{p.m.f.1}
   \end{align}
   Therefore from \eqref{tfpp p.m.f.}, \eqref{p.m.f.} and \eqref{p.m.f.1}, we get
   \begin{align*}
        P\{\mathcal{\widetilde{M}}^\alpha_1(t)=j\}&=\sum_{r=0}^{j}\sum_{\substack{\alpha_1+\alpha_2+...+\alpha_k=r \\ \alpha_1+2\alpha_2+...+k\alpha_k=j}}\binom{r}{\alpha_1, \alpha_2,\ldots,\alpha_k} \prod_{s=1}^{k}\left(\Lambda_s(t)\right)^{\alpha_s}\times\frac{1}{\left(\Lambda(t)\right)^{r(1-\alpha)}}E_{\alpha, r\alpha+1}^{r+1}\left(-(\Lambda(t))^\alpha\right).
    \end{align*}
Similarly we have 
\begin{align*}
        P\{\mathcal{\widetilde{M}}^\alpha_2(t)=j\}&=\sum_{r=0}^{j}\sum_{\substack{\alpha_1+\alpha_2+...+\alpha_k=r \\ \alpha_1+2\alpha_2+...+k\alpha_k=j}}\binom{r}{\alpha_1, \alpha_2,\ldots,\alpha_k} \prod_{s=1}^{k}\left(T_s(t)\right)^{\alpha_s}\times\frac{1}{\left(T(t)\right)^{r(1-\alpha)}}E_{\alpha, r\alpha+1}^{r+1}\left(-(T(t))^\alpha\right).
    \end{align*}
Since $\{\widetilde{\mathcal{M}}^{\alpha}_{1}(t)\}_{t\ge 0}$ and $\{\widetilde{\mathcal{M}}^{\alpha}_{2}(t)\}_{t\ge 0}$ are independent, the p.m.f. of NHGFSP is
\begin{align*}
P\{\widetilde{\mathcalboondox{S}}^{\alpha}(t)=n\}
    &=P\{\widetilde{\mathcal{M}}^{\alpha}_{1}(t)-\widetilde{\mathcal{M}}^{\alpha}_{2}(t)=n\}\mathbb{I}_{\{n\geq0\}}+P\{\widetilde{\mathcal{M}}^{\alpha}_{1}(t)-\widetilde{\mathcal{M}}^{\alpha}_{2}(t)=n\}\mathbb{I}_{\{n<0\}}\nonumber\\
     &= \sum_{m=0}^{\infty}[P\{\widetilde{\mathcal{M}}^{\alpha}_{1}(t)=m+n\}P\{\widetilde{\mathcal{M}}^{\alpha}_{2}(t)=m\}\mathbb{I}_{\{n\geq0\}}+P\{\widetilde{\mathcal{M}}^{\alpha}_{2}(t)=m+|n|\}P\{\widetilde{\mathcal{M}}^{\alpha}_{1}(t)=m\}\mathbb{I}_{\{n<0\}}].
\end{align*}  
\end{proof}
  
\subsection{Mean, Variance and Covariance}
 Using the p.g.f. in \eqref{p.g.f..ntfpp}, the first and second moments of NTFPP are given by
\begin{equation*}
\mathbb{E}\left(\tilde{N}^\alpha(t)\right) = \frac{\left(\Lambda(t)\right)^{\alpha}}{\Gamma(\alpha+1)},\quad \mathbb{V}\left(\tilde{N}^\alpha(t)\right) = \frac{\left(\Lambda(t)\right)^{\alpha}}{\Gamma(\alpha+1)}\left[1+\frac{\left(\Lambda(t)\right)^{\alpha}}{\Gamma(\alpha+1)}\left\{\frac{\alpha B(\alpha,1/2)}{2^{2\alpha}-1}-1\right\}\right]  
\end{equation*} 
where $B(a,b)=\Gamma(a)\Gamma(b)/\Gamma(a+b)$. Hence using \eqref{def1}, the mean and variance of NHGFCP are
\begin{align*}
\mathbb{E}\left(\mathcal{\widetilde{M}}^\alpha(t)\right)&=\mathbb{E}\left(\tilde{N}^\alpha(t)\right)\mathbb{E}\left(X_1\right)=\left(\frac{\left(\Lambda(t)\right)^{\alpha}}{\Gamma(\alpha+1)}\right)\left(\frac{1}{\Lambda(t)}\sum_{j=1}^{k}j\Lambda_j(t)\right), \\
\mathbb{V}\left(\mathcal{\widetilde{M}}^\alpha(t)\right)&=\mathbb{E}\left(\tilde{N}^\alpha(t)\right)\mathbb{V}(X_1) + [\mathbb{E}\left(X_1\right)]^2\mathbb{V}\left(\tilde{N}^\alpha(t)\right) \\
&= \frac{\left(\Lambda(t)\right)^{\alpha}}{\Gamma(\alpha+1)}\left[\left(\frac{1}{\Lambda(t)}\sum_{j=1}^{k}j^2\Lambda_j(t)\right)+\left(\frac{1}{\Lambda(t)}\sum_{j=1}^{k}j\Lambda_j(t)\right)^2\frac{\left(\Lambda(t)\right)^{\alpha}}{\Gamma(\alpha+1)}\left\{\frac{\alpha B(\alpha,1/2)}{2^{2\alpha-1}}-1\right\}\right].
\end{align*}
Note that
\begin{align*}
\mathbb{V}\left(\mathcal{\widetilde{M}}^\alpha(t)\right)-\mathbb{E}\left(\mathcal{\widetilde{M}}^\alpha(t)\right) &= \frac{\left(\Lambda(t)\right)^{2\alpha}}{\Gamma^2(\alpha+1)}\left(\frac{1}{\Lambda(t)}\sum_{j=1}^{k}j\Lambda_j(t)\right)^2\left\{\frac{\alpha B(\alpha,1/2)}{2^{2\alpha-1}}-1\right\} > 0 
\end{align*}
since $\alpha B(\alpha,1/2)>2^{2\alpha-1}$. Thus NHGFCP is over-dispersed. Next for $s<t$,
\begin{align*}
\text{Cov}[\mathcal{\widetilde{M}}^\alpha(s),\mathcal{\widetilde{M}}^\alpha(t)]
    &=\mathbb{E}[\mathcal{\widetilde{M}}^\alpha(s)(\mathcal{\widetilde{M}}^\alpha(t)-\mathcal{\widetilde{M}}^\alpha(s))+(\mathcal{\widetilde{M}}^\alpha(s))^2]-\mathbb{E}[\mathcal{\widetilde{M}}^\alpha(s)]\mathbb{E}[\mathcal{\widetilde{M}}^\alpha(t)]\\
    &=\mathbb{E}[\mathcal{\widetilde{M}}^\alpha(s)]\mathbb{E}(\mathcal{\widetilde{M}}^\alpha(t)-\mathcal{\widetilde{M}}^\alpha(s))+\mathbb{E}(\mathcal{\widetilde{M}}^\alpha(s))^2-\mathbb{E}[\mathcal{\widetilde{M}}^\alpha(s)]\mathbb{E}[\mathcal{\widetilde{M}}^\alpha(t)]
    =\mathbb{V}(\mathcal{\widetilde{M}}^\alpha(s)).
\end{align*}
It follows that for arbitrary $s\ge 0,~t\ge 0$, we have $\text{Cov}[\mathcal{\widetilde{M}}^\alpha(s),\mathcal{\widetilde{M}}^\alpha(t)]=\mathbb{V}(\mathcal{\widetilde{M}}^\alpha(s\wedge t))$. Using the above expressions, we can obtain the mean, variance and covariance of NHGFSP as follows:
\begin{align*}
\mathbb{E}\left(\widetilde{\mathcalboondox{S}}^{\alpha}(t)\right) &= \left(\frac{\left(\Lambda(t)\right)^{\alpha}}{\Gamma(\alpha+1)}\right)\left(\frac{1}{\Lambda(t)}\sum_{j=1}^{k}j\Lambda_j(t)\right) + \left(\frac{\left(T(t)\right)^{\alpha}}{\Gamma(\alpha+1)}\right)\left(\frac{1}{\Lambda(t)}\sum_{j=1}^{k}jT_j(t)\right), \nonumber \\
\mathbb{V}\left(\widetilde{\mathcalboondox{S}}^{\alpha}(t)\right) &=\frac{\left(\Lambda(t)\right)^{\alpha}}{\Gamma(\alpha+1)}\left[\left(\frac{1}{\Lambda(t)}\sum_{j=1}^{k}j^2\Lambda_j(t)\right)+\left(\frac{1}{\Lambda(t)}\sum_{j=1}^{k}j\Lambda_j(t)\right)^2\frac{\left(\Lambda(t)\right)^{\alpha}}{\Gamma(\alpha+1)}\left\{\frac{\alpha B(\alpha,1/2)}{2^{2\alpha-1}}-1\right\}\right] \nonumber \\
& + \frac{\left(T(t)\right)^{\alpha}}{\Gamma(\alpha+1)}\left[\left(\frac{1}{T(t)}\sum_{j=1}^{k}j^2 T_j(t)\right)+\left(\frac{1}{\Lambda(t)}\sum_{j=1}^{k}j T_j(t)\right)^2\frac{\left(T(t)\right)^{\alpha}}{\Gamma(\alpha+1)}\left\{\frac{\alpha B(\alpha,1/2)}{2^{2\alpha-1}}-1\right\}\right], \nonumber \\
\text{Cov}[\widetilde{\mathcalboondox{S}}^\alpha(s),\widetilde{\mathcalboondox{S}}^\alpha(t)] &= \mathbb{V}(\mathcal{\widetilde{M}}^\alpha_1(s\wedge t)) + \mathbb{V}(\mathcal{\widetilde{M}}^\alpha_2(s\wedge t))=\mathbb{V}\left(\widetilde{\mathcalboondox{S}}^{\alpha}(s\wedge t)\right).
\end{align*}
Moreover since $\mathbb{V}\left(\widetilde{\mathcalboondox{S}}^{\alpha}(t)\right)>\mathbb{E}\left(\widetilde{\mathcalboondox{S}}^{\alpha}(t)\right)$, NHGFSP is over-dispersed.

\subsection{Correlation structure}
For fixed $s\ge 0,~t\ge 0$, the correlation function of NHGFSP where $s<t$ is 
\begin{align*}
\text{Corr}(\widetilde{\mathcalboondox{S}}^\alpha(s),\widetilde{\mathcalboondox{S}}^\alpha(t))&= \frac{\text{Cov}(\widetilde{\mathcalboondox{S}}^\alpha(s),\widetilde{\mathcalboondox{S}}^\alpha(t))}{\sqrt{\mathbb{V}(\widetilde{\mathcalboondox{S}}^\alpha(s))}\sqrt{\mathbb{V}(\widetilde{\mathcalboondox{S}}^\alpha(t))}} = \sqrt{\frac{\mathbb{V}(\widetilde{\mathcalboondox{S}}^\alpha(s))}{\mathbb{V}(\widetilde{\mathcalboondox{S}}^\alpha(t))}}.
\end{align*}
Note that
\begin{equation*}
\mathbb{V}\left(\widetilde{\mathcalboondox{S}}^\alpha(t)\right)>\frac{\left(\Lambda(t)\right)^{2\alpha}+\left(T(t)\right)^{2\alpha}}{[\Gamma(\alpha+1)]^2}
\end{equation*}
Thus we have
\begin{align*}
\text{Corr}(\widetilde{\mathcalboondox{S}}^\alpha(s),\widetilde{\mathcalboondox{S}}^\alpha(t)) &< c(s)(\left(\Lambda(t)\right)^{2\alpha}+\left(T(t)\right)^{2\alpha})^{-1/2} \quad\text{where}~c(s)=\sqrt{\mathbb{V}\left(\widetilde{\mathcalboondox{S}}^\alpha(s)\right)}>0.
\end{align*}
Suppose $\Lambda(t)=(t/b)^a$ and $T(t)=(t/d)^c$ (Weibull's rate functions) where $a,b,c,d>0$ are some constants. Then NHGFSP has the LRD property if $a\wedge c<1/\alpha$ and the SRD property if $1/\alpha<a\wedge c<2/\alpha$. 

\subsection{Waiting time}
We want to find the distribution of the waiting time for NHGFCP until the first occurrence of a jump of size $j$ for $1\le j\le k$. First note that the NHGFCP can be written as 
\begin{equation*}
\mathcal{\widetilde{M}}^\alpha(t)) = \sum_{j=1}^k j\mathcal{\widetilde{M}}^\alpha_j(t)
\end{equation*}
where $\mathcal{\widetilde{M}}^\alpha_j(t)=\sum_{j=1}^{\tilde{N}^\alpha(t)}\bm{1}_{\{X_1=j\}}$ is the number of jumps of size $j$ performed by the NHGFCP in the interval $(0,t]$. Next consider the random variables
\begin{equation*}
H_j = \inf\{s>0:\mathcal{\widetilde{M}}^\alpha_j(s)=1\}, \quad G_j\sim\text{Geo}\left(\frac{\Lambda_j(t)}{\Lambda(t)}\right).
\end{equation*}
Here $H_j$ is the first occurrence time of a jump of size $j$ for the NHGFCP and $G_j$ is a geometric random variable with parameter $\Lambda_j(t)/\Lambda(t)$ denoting the order of the first jump of size $j$ among all jumps. 
\begin{theorem}
The distribution function of $H_j$ is
\begin{equation*}
P(H_j\le t) = \Lambda_j(t)(\Lambda(t))^{\alpha-1}E_{\alpha,\alpha+1}\left(-\Lambda_j(t)(\Lambda(t))^{\alpha-1}\right).    
\end{equation*}
\end{theorem}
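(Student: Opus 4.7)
The plan is to compute $P(H_j \le t)$ via its complement, exploiting the compound representation \eqref{def1}--\eqref{def2} of the NHGFCP and the known probability generating function \eqref{pgf.ntfpp} of the NTFPP. Since $\{\widetilde{\mathcal{M}}^{\alpha}_j(s)\}_{s\ge 0}$ is a counting (non-decreasing, integer-valued) process starting from $0$, the events $\{H_j \le t\}$ and $\{\widetilde{\mathcal{M}}^{\alpha}_j(t)\ge 1\}$ coincide, so it suffices to compute $P\{\widetilde{\mathcal{M}}^{\alpha}_j(t)=0\}$ and take complement.

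The key step is to condition on the NTFPP. Writing $\widetilde{\mathcal{M}}^{\alpha}_j(t) = \sum_{i=1}^{\tilde N^{\alpha}(t)} \mathbf{1}_{\{X_i = j\}}$ and using the independence of $\{X_i\}$ from $\tilde N^{\alpha}(t)$ together with $P\{X_i \neq j\} = 1 - \Lambda_j(t)/\Lambda(t)$, I obtain
\begin{equation*}
P\{\widetilde{\mathcal{M}}^{\alpha}_j(t) = 0\} = \sum_{n=0}^{\infty} P\{\tilde N^{\alpha}(t) = n\}\left(1 - \frac{\Lambda_j(t)}{\Lambda(t)}\right)^{n} = \tilde G^{\alpha}\!\left(1 - \frac{\Lambda_j(t)}{\Lambda(t)},\, t\right),
\end{equation*}
which by \eqref{pgf.ntfpp} equals $E_{\alpha,1}\!\left(-\Lambda_j(t)(\Lambda(t))^{\alpha-1}\right)$.

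The final step is an algebraic identity for the Mittag--Leffler function: from the series definitions one checks
\begin{equation*}
1 - E_{\alpha,1}(-x) \;=\; -\sum_{k=1}^{\infty}\frac{(-x)^k}{\Gamma(k\alpha+1)} \;=\; x\sum_{k=0}^{\infty}\frac{(-x)^{k}}{\Gamma(k\alpha+\alpha+1)} \;=\; x\,E_{\alpha,\alpha+1}(-x).
\end{equation*}
Applying this with $x = \Lambda_j(t)(\Lambda(t))^{\alpha-1}$ yields the stated closed form $P(H_j \le t) = \Lambda_j(t)(\Lambda(t))^{\alpha-1}E_{\alpha,\alpha+1}\bigl(-\Lambda_j(t)(\Lambda(t))^{\alpha-1}\bigr)$.

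The only real obstacle is a conceptual one: the compound representation \eqref{def1}--\eqref{def2} has $X_i$'s whose mixing distribution formally depends on $t$, so I need to be careful that the identity $P\{\widetilde{\mathcal{M}}^{\alpha}_j(t)=0\} = \tilde G^{\alpha}(1-\Lambda_j(t)/\Lambda(t),t)$ is being invoked correctly, i.e., as an equality in distribution at each fixed $t$ rather than as a pathwise statement about the subprocess. The remaining steps are a direct substitution into the Mittag--Leffler PGF and a standard series manipulation, both of which are routine.
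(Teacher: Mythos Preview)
Your proof is correct and takes a genuinely different, and considerably more elementary, route than the paper's. The paper conditions on the geometric variable $G_j$ (the index of the first jump of size $j$), writes
\[
P(H_j\le t)=\sum_{n=1}^{\infty}\tilde F^{\alpha}_n(t)\,P(G_j=n)
\]
in terms of the NTFPP arrival-time distributions $\tilde F^{\alpha}_n(t)=(\Lambda(t))^{n\alpha}E^{\,n}_{\alpha,n\alpha+1}(-(\Lambda(t))^{\alpha})$, and then collapses the resulting sum using two special-function identities: an Euler-type integral representation for $E^{\delta}_{\beta,\gamma+\alpha}$ (Mathai--Haubold) and a summation formula for $\sum_n(\lambda w t^{\alpha})^nE^{\,n+1}_{\alpha,n\alpha+1}(-\lambda t^{\alpha})$ (Beghin--Orsingher), followed by a further integral formula to reach $E_{\alpha,\alpha+1}$. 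Your argument bypasses all of this by computing the complement directly: conditioning on $\tilde N^{\alpha}(t)$ immediately identifies $P\{\widetilde{\mathcal M}^{\alpha}_j(t)=0\}$ as the PGF $\tilde G^{\alpha}$ evaluated at $1-\Lambda_j(t)/\Lambda(t)$, which is already the two-parameter Mittag--Leffler by \eqref{pgf.ntfpp}; the single elementary series identity $1-E_{\alpha,1}(-x)=xE_{\alpha,\alpha+1}(-x)$ then finishes. What the paper's approach buys is an explicit link to the arrival-time machinery $\tilde F^{\alpha}_n$ and to three-parameter Mittag--Leffler calculus developed earlier in the section; what your approach buys is brevity and the avoidance of any external integral or summation formulas. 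Your caveat about the $t$-dependence of the $X_i$'s distribution is well placed: both proofs are really computing the fixed-$t$ marginal law via \eqref{def1}--\eqref{def2}, not making a pathwise statement.
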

\begin{proof}
We have
\begin{align*}
P(H_j\le t) &= \sum_{n=1}^{\infty}P(H_j\le t\mid G_j=n)P(G_j=n) \\
&=\sum_{n=1}^{\infty}\tilde{F}^{\alpha}_n(t)\left(\frac{\Lambda_j(t)}{\Lambda(t}\right)\left(1-\frac{\Lambda_j(t)}{\Lambda(t}\right)^{n-1} \\
&=\sum_{n=1}^{\infty}\left(\Lambda(t)\right)^{n\alpha}E_{\alpha, n\alpha+1}^{n}\left(-(\Lambda(t))^\alpha\right)\left(\frac{\Lambda_j(t)}{\Lambda(t}\right)\left(1-\frac{\Lambda_j(t)}{\Lambda(t}\right)^{n-1}\quad(\text{from}~\eqref{eq3}) \\
&=\Lambda_j(t)(\Lambda(t))^{\alpha-1}\sum_{n=0}^{\infty}\left(\Lambda(t)\right)^{n\alpha}\left(1-\frac{\Lambda_j(t)}{\Lambda(t)}\right)^{n}E_{\alpha, (n+1)\alpha+1}^{n+1}\left(-(\Lambda(t))^\alpha\right).
\end{align*}
Using formula (2.3.1) of \citet{Mathai2008}
\begin{equation*}
E^{\delta}_{\beta,\gamma+\alpha}(z) = \frac{1}{\Gamma(\alpha)}\int_{0}^{1}u^{\gamma -1}(1-u)^{\alpha-1}E^{\delta}_{\beta,\gamma}(zu^{\beta})
\end{equation*}
for $\beta=\alpha$, $\gamma=n\alpha+1$, $\delta=n+1$ and $z=-(\Lambda(t))^\alpha$, we have
\begin{align*}
P(H_j\le t) &= \frac{\Lambda_j(t)(\Lambda(t))^{\alpha-1}}{\Gamma(\alpha)}\sum_{n=0}^{\infty}\left(\Lambda(t)\right)^{n\alpha}\left(1-\frac{\Lambda_j(t)}{\Lambda(t)}\right)^{n}\int_{0}^1 u^{n\alpha}(1-u)^{\alpha-1}E_{\alpha,n\alpha+1}^{n+1}\left(-(\Lambda(t))^\alpha u^{\alpha}\right) \\
&= \frac{\Lambda_j(t)(\Lambda(t))^{\alpha-1}}{\Gamma(\alpha)}\int_{0}^1 (1-u)^{\alpha-1}\sum_{n=0}^{\infty}\left[\left(\Lambda(t)\right)^{\alpha}\left(1-\frac{\Lambda_j(t)}{\Lambda(t)}\right)u^{\alpha}\right]^n E_{\alpha,n\alpha+1}^{n+1}\left(-(\Lambda(t))^\alpha u^{\alpha}\right)du. 
\end{align*}
Using formula (2.30) of \citet{Beghin2010}
\begin{equation*}
\sum_{n=0}^{\infty}(\lambda wt^{\alpha})^n E_{\alpha,n\alpha+1}^{n+1}(-\lambda t^{\alpha}) = E_{\alpha,1}(\lambda(w-1)t^{\alpha}, \quad |w|\le 1, t>0,
\end{equation*}
we have
\begin{equation*}
P(H_j\le t) = \frac{\Lambda_j(t)(\Lambda(t))^{\alpha-1}}{\Gamma(\alpha)}\int_{0}^1 (1-u)^{\alpha-1}E_{\alpha,1}\left(-\Lambda_j(t)(\Lambda(t))^{\alpha-1}u^{\alpha}\right)du.
\end{equation*}
Using formula (2.2.14) of \citet{Mathai2008}
\begin{equation*}
\int_{0}^1 z^{\beta-1}(1-z)^{\sigma -1}E_{\alpha,\beta}(xz^{\alpha})dz = \Gamma(\sigma)E_{\alpha,\sigma+\beta}(x)
\end{equation*}
for $\sigma=\alpha$, $\beta=1$, $x = -\Lambda_j(t)(\Lambda(t))^{\alpha-1}$ and $z=u$, we have
\begin{equation*}
P(H_j\le t) = \Lambda_j(t)(\Lambda(t))^{\alpha-1}E_{\alpha,\alpha+1}\left(-\Lambda_j(t)(\Lambda(t))^{\alpha-1}\right).
\end{equation*}
\end{proof}
Note that it's difficult to obtain the distribution of the first occurrence time of a jump of size $j$ for the NHGFSP. However for $n\in\mathbb{Z}$ and $t>0$, the distribution functions for the $n$th arrival time $\widetilde{\tau}^{\alpha}_n$ and the first passage time $\widetilde{T}^{\alpha}_n$ for state $n$ of a NHGFSP are given by
\begin{align*}
F_{\widetilde{\tau}_n^\alpha}(t)=P\left(\widetilde{\tau}^{\alpha}_n\le t\right)=\sum_{i\in(0,t]}P\left\{\widetilde{\mathcalboondox{S}}^\alpha(i)= n\right\},\quad F_{\widetilde{T}_n^\alpha}(t)=P\{\widetilde{T}_n^{\alpha}\le t\}= \sum_{i\in (0,t]}\sum_{m=n}^{\infty}P\left\{\widetilde{\mathcalboondox{S}}^\alpha(i)= m\right\},
\end{align*}
where the p.m.f. of NHGFSP is given in Theorem \ref{p.m.f..NGHFSP}.


\subsection{Convergence results}
Here we study the asymptotic behaviour of NTFPP and NHGFSP as their parameters become large. 
\begin{theorem}\label{convergence.NTFPP}
Let $\alpha\in(0,1]$. For a fixed $t>0$, as $\Lambda(t)\rightarrow\infty$, we have
\begin{equation*}
\frac{\tilde{N}^\alpha(t)}{\mathbb{E}(\tilde{N}^\alpha(t))}\xrightarrow{p} 1.
\end{equation*}
\end{theorem}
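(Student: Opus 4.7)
My plan is to invoke Chebyshev's inequality. For any $\epsilon>0$,
\begin{equation*}
P\!\left(\left|\frac{\tilde{N}^{\alpha}(t)}{\mathbb{E}(\tilde{N}^{\alpha}(t))}-1\right|>\epsilon\right)\le \frac{\mathbb{V}(\tilde{N}^{\alpha}(t))}{\epsilon^{2}\,[\mathbb{E}(\tilde{N}^{\alpha}(t))]^{2}},
\end{equation*}
so the entire task reduces to showing that the variance-to-mean-squared ratio tends to $0$ as $\Lambda(t)\to\infty$.

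The first step is to substitute the closed-form moments derived earlier in the section. Using $\mathbb{E}(\tilde{N}^{\alpha}(t))=(\Lambda(t))^{\alpha}/\Gamma(\alpha+1)$ together with the variance formula, one sees that $\mathbb{V}(\tilde{N}^{\alpha}(t))$ naturally splits into a Poisson contribution of order $(\Lambda(t))^{\alpha}$ and a subordinator contribution of order $(\Lambda(t))^{2\alpha}$, while $[\mathbb{E}(\tilde{N}^{\alpha}(t))]^{2}$ is of order $(\Lambda(t))^{2\alpha}$. After division, the first piece is $O((\Lambda(t))^{-\alpha})$, which vanishes immediately. The second piece contributes a constant in $\Lambda(t)$ depending only on $\alpha$ and involving a Beta-function quotient, and the crux is to verify that this constant is zero via a Beta-Gamma identity (e.g.\ the Legendre duplication formula applied to $\alpha B(\alpha,1/2)/2^{2\alpha-1}$). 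For the boundary case $\alpha=1$ this is immediate, as the result reduces to the classical weak law for the unit-rate Poisson process $N_{1}(\Lambda(t))$.

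The main technical obstacle I foresee is exactly controlling this constant term for general $\alpha\in(0,1)$, since self-similarity of the inverse stable subordinator gives $Y_{\alpha}(\Lambda(t))/(\Lambda(t))^{\alpha}\stackrel{d}{=}Y_{\alpha}(1)$ (a Mittag-Leffler variable), which suggests the bound from Chebyshev may be delicate. If the raw second-moment bound is insufficient, a back-up plan is to work with the representation $\tilde{N}^{\alpha}(t)=N_{1}(Y_{\alpha}(\Lambda(t)))$ directly: condition on $Y_{\alpha}(\Lambda(t))$ and apply the weak law for the unit-rate Poisson process on the event $\{Y_{\alpha}(\Lambda(t))\to\infty\}$ (which holds with probability tending to one as $\Lambda(t)\to\infty$), and then control the ratio $Y_{\alpha}(\Lambda(t))/\mathbb{E}(Y_{\alpha}(\Lambda(t)))$ via the Laplace transform $\tilde h_{\alpha}(x,s)=s^{\alpha-1}e^{-xs^{\alpha}}$ recalled in Section \ref{stable subordinator}. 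An alternative route is to extract the limit from the moment generating function \eqref{mgf.NTFPP} by Laplace/Mellin asymptotics.
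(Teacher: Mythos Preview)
Your Chebyshev route cannot close: the ``constant'' you hope to kill,
\[
\frac{\alpha\,B(\alpha,1/2)}{2^{2\alpha-1}}-1,
\]
is \emph{strictly positive} for every $\alpha\in(0,1)$; the paper itself invokes the inequality $\alpha B(\alpha,1/2)>2^{2\alpha-1}$ a few lines later to establish over-dispersion of the NHGFCP. (The Legendre duplication formula gives $2/\Gamma(2\alpha+1)=\alpha B(\alpha,1/2)\big/\bigl(2^{2\alpha-1}\Gamma(\alpha+1)^2\bigr)$, so this constant is exactly the normalised variance of the Mittag--Leffler law $Y_\alpha(1)$, which vanishes only at $\alpha=1$.) Hence $\mathbb{V}(\tilde N^\alpha(t))/[\mathbb{E}(\tilde N^\alpha(t))]^2$ does \emph{not} tend to $0$, and Chebyshev gives nothing.

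Your own parenthetical observation is the real obstruction, and it blocks the back-up plans as well. Self-similarity gives $Y_\alpha(\Lambda(t))\stackrel{d}{=}(\Lambda(t))^\alpha Y_\alpha(1)$ for every value of $\Lambda(t)$, so the ratio $Y_\alpha(\Lambda(t))/\mathbb{E}(Y_\alpha(\Lambda(t)))$ has the \emph{fixed} law of $\Gamma(\alpha+1)Y_\alpha(1)$ and cannot concentrate as $\Lambda(t)\to\infty$. Combined with the Poisson LLN (your conditioning idea), this yields
\[
\frac{\tilde N^\alpha(t)}{\mathbb{E}(\tilde N^\alpha(t))}
=\frac{N_1\bigl((\Lambda(t))^\alpha Y_\alpha(1)\bigr)}{(\Lambda(t))^\alpha/\Gamma(\alpha+1)}
\ \xrightarrow{\,d\,}\ \Gamma(\alpha+1)\,Y_\alpha(1),
\]
a non-degenerate limit for $\alpha<1$. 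So neither the Laplace-transform route nor the conditional-LLN route can produce convergence in probability to $1$ in this range. The paper proceeds quite differently: it argues $L^1$-convergence by writing $\mathbb{E}\bigl|\tilde N^\alpha(t)/\mathbb{E}\tilde N^\alpha(t)-1\bigr|$ as the series $\sum_n|\cdot|\,(\Lambda(t))^{n\alpha}E^{n+1}_{\alpha,n\alpha+1}\bigl(-(\Lambda(t))^\alpha\bigr)$ and passing the limit inside termwise via the asymptotics $E^\delta_{\alpha,\beta}(x)=O(|x|^{-\delta})$. In view of the self-similarity computation above, you should examine that interchange of limit and infinite sum with particular care.
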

\begin{proof}
Using the triangular inequality, we have
\begin{equation*}
\mathbb{E}\left[\left|\frac{\tilde{N}^\alpha(t)}{\mathbb{E}(\tilde{N}^\alpha(t))}-1\right|\right]\le 2.
\end{equation*}
Hence applying the dominated convergence theorem, we obtain
\begin{equation}\label{eq4}
\lim_{\Lambda(t)\rightarrow\infty}\mathbb{E}\left[\left|\frac{\tilde{N}^\alpha(t)}{\mathbb{E}(\tilde{N}^\alpha(t))}-1\right|\right] = \lim_{\Lambda(t)\rightarrow\infty}\sum_{n=0}^{\infty}\left|\frac{n}{\frac{\left(\Lambda(t)\right)^{\alpha}}{\Gamma(\alpha+1)}}-1\right|\left(\Lambda(t)\right)^{n\alpha}E_{\alpha, n\alpha+1}^{n+1}\left(-(\Lambda(t))^\alpha\right).
\end{equation}
Note that the generalized Mittag-Leffler function for large $x$ behaves as $E^{\delta}_{\alpha,\beta}(x)\sim\mathcal{O}(|x|^{-\delta})$ for $|x|>1$ (see
\citet{Saxena2004}) so the limit in \eqref{eq4} is $0$. Thus the random variable $\frac{\tilde{N}^\alpha(t)}{\mathbb{E}(\tilde{N}^\alpha(t))}$ converges in mean to 1, which implies $\frac{\tilde{N}^\alpha(t)}{\mathbb{E}(\tilde{N}^\alpha(t))}\xrightarrow{p} 1$.   
\end{proof}
The $r^{th}$ order moment of a TFPP is given by $\mathbb{E}[N^\alpha(t)]^r=\sum_{q=0}^{r}S_{\alpha}(r,q)(\lambda t^{\alpha})^q$ where $S_{\alpha}(r,q)$ is the fractional Stirling number (see Eqs. (32) and (40) of \citet{Laskin2010}). So the $r^{th}$ order moment of a NTFPP is given by $\mathbb{E}[\tilde{N}^\alpha(t)]^r=\sum_{q=0}^{r}S_{\alpha}(r,q)(\Lambda(t))^{\alpha q})$. Using this expression, the following result can be obtained similarly as in Theorem \ref{convergence.NTFPP}. 
\begin{proposition}\label{moment.NTFPP}
Let $\alpha\in(0,1]$ and $r\in\mathbb{N}$. For a fixed $t>0$, as $\Lambda(t)\rightarrow\infty$, we have
\begin{equation*}
\frac{[\tilde{N}^\alpha(t)]^r}{\mathbb{E}[\tilde{N}^\alpha(t)]^r}\xrightarrow{p} 1.
\end{equation*}
\end{proposition}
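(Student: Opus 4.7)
The plan is to mirror the proof of Theorem \ref{convergence.NTFPP} almost verbatim, with the random variable $[\tilde{N}^\alpha(t)]^r$ playing the role of $\tilde{N}^\alpha(t)$. Since $[\tilde{N}^\alpha(t)]^r\ge 0$ and has finite mean (the identity $\mathbb{E}[\tilde{N}^\alpha(t)]^r=\sum_{q=0}^{r}S_\alpha(r,q)(\Lambda(t))^{\alpha q}$ is a polynomial in $(\Lambda(t))^\alpha$ of degree $r$), the triangle inequality immediately gives
\[
\mathbb{E}\left[\left|\frac{[\tilde{N}^\alpha(t)]^r}{\mathbb{E}[\tilde{N}^\alpha(t)]^r}-1\right|\right]\le 2,
\]
which supplies the uniform bound needed to prepare a dominated convergence argument.

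Next I would expand this expectation as a series over the state space of $\tilde{N}^\alpha(t)$ via the p.m.f. \eqref{tfpp pmf}, obtaining
\[
\sum_{n=0}^{\infty}\left|\frac{n^r}{\sum_{q=0}^{r}S_\alpha(r,q)(\Lambda(t))^{\alpha q}}-1\right|(\Lambda(t))^{n\alpha}E^{n+1}_{\alpha,n\alpha+1}\bigl(-(\Lambda(t))^\alpha\bigr).
\]
The denominator in the absolute value is asymptotic to $S_\alpha(r,r)(\Lambda(t))^{\alpha r}$ and diverges as $\Lambda(t)\to\infty$, while the large-argument asymptotic $E^\delta_{\alpha,\beta}(x)\sim\mathcal{O}(|x|^{-\delta})$ for $|x|>1$ of \citet{Saxena2004} forces $(\Lambda(t))^{n\alpha}E^{n+1}_{\alpha,n\alpha+1}(-(\Lambda(t))^\alpha)\to 0$ for every fixed $n$. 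Each summand therefore tends to $0$; passing the limit inside the sum by dominated convergence (using the bound from the first paragraph as the envelope) produces
\[
\lim_{\Lambda(t)\to\infty}\mathbb{E}\left[\left|\frac{[\tilde{N}^\alpha(t)]^r}{\mathbb{E}[\tilde{N}^\alpha(t)]^r}-1\right|\right]=0,
\]
i.e.\ convergence in mean, and then Markov's inequality upgrades this to convergence in probability, which is exactly the stated claim.

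The step I expect to require the most care is the interchange of limit and summation in the penultimate display. The bound by $2$ controls the total mass of the series but is not itself a summable envelope for the individual terms, and the implicit constant in the Mittag-Leffler asymptotic depends on $n$ through factors such as $\Gamma(n\alpha+1-\alpha(n+1))=\Gamma(1-\alpha)$, so a careful rigorous argument would split the sum at some index $N$, dispatch the finite head $n\le N$ by termwise convergence, and then bound the tail $n>N$ uniformly in $\Lambda(t)$ by using the moment identity $\sum_n n^r P(\tilde{N}^\alpha(t)=n)=\mathbb{E}[\tilde{N}^\alpha(t)]^r$ together with the growth rate of this moment. This tail control is the only point where the argument goes beyond the scalar case $r=1$ treated in Theorem \ref{convergence.NTFPP}.
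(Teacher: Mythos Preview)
Your proposal is correct and follows exactly the approach the paper intends: the paper does not give a separate proof but simply states that the result ``can be obtained similarly as in Theorem \ref{convergence.NTFPP}'' using the moment formula $\mathbb{E}[\tilde{N}^\alpha(t)]^r=\sum_{q=0}^{r}S_\alpha(r,q)(\Lambda(t))^{\alpha q}$, which is precisely what you do. Your closing caveat about justifying the limit--sum interchange is in fact more careful than the paper's own treatment of the analogous step in Theorem \ref{convergence.NTFPP}.
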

Next we discuss a convergence result for NHGFSP.
\begin{theorem}\label{convergence.NHGFSP}
Let $\alpha\in(0,1]$ and $r\in\mathbb{N}$. For $1\le j\le k$ and fixed $t>0$, as $\Lambda_j(t), T_j(t)\rightarrow\infty$, we have
\begin{equation*}
\frac{[\widetilde{\mathcalboondox{S}}^\alpha(t)]^r}{[\mathbb{E}(\widetilde{\mathcalboondox{S}}^\alpha(t))]^r}\xrightarrow{p} 1.    
\end{equation*}
\end{theorem}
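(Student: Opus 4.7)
The plan is to prove the theorem in two stages: first establish the analog for a single NHGFCP component, and then combine the two independent NHGFCPs using the difference representation of NHGFSP. The strategy mirrors the way Theorem \ref{convergence.NTFPP} was bootstrapped up to Proposition \ref{moment.NTFPP}, but with an extra step to accommodate the jump-size randomization encoded in the compound representation \eqref{def1}.

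Stage 1 (NHGFCP analog). I would first show
\begin{equation*}
\frac{[\widetilde{\mathcal{M}}^\alpha(t)]^r}{[\mathbb{E}(\widetilde{\mathcal{M}}^\alpha(t))]^r}\xrightarrow{p} 1
\end{equation*}
as $\Lambda_j(t)\to\infty$ for $1\le j\le k$. Using \eqref{def1}, $\widetilde{\mathcal{M}}^\alpha(t) = \sum_{i=1}^{\tilde{N}^\alpha(t)} X_i$, where $X_i$ is an i.i.d.\ sequence taking values in $\{1,\dots,k\}$ (hence bounded) and independent of the NTFPP $\tilde{N}^\alpha(t)$. The strong law of large numbers gives $n^{-1}\sum_{i=1}^n X_i\to\mathbb{E}(X_1)$ a.s., and Theorem \ref{convergence.NTFPP} ensures $\tilde{N}^\alpha(t)\to\infty$ in probability, so an Anscombe-type theorem delivers $\widetilde{\mathcal{M}}^\alpha(t)/\tilde{N}^\alpha(t)\to\mathbb{E}(X_1)$ in probability. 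Combining this with Proposition \ref{moment.NTFPP} for $r=1$ and the identity $\mathbb{E}(\widetilde{\mathcal{M}}^\alpha(t))=\mathbb{E}(X_1)\mathbb{E}(\tilde{N}^\alpha(t))$ yields the case $r=1$, and the continuous mapping theorem applied to $x\mapsto x^r$ lifts it to general $r$.

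Stage 2 (passing to NHGFSP). Write $\widetilde{\mathcalboondox{S}}^\alpha(t)=\widetilde{\mathcal{M}}^\alpha_1(t)-\widetilde{\mathcal{M}}^\alpha_2(t)$ with independent components, and set $a_i=\mathbb{E}(\widetilde{\mathcal{M}}^\alpha_i(t))$. Binomial expansion gives
\begin{equation*}
\frac{[\widetilde{\mathcalboondox{S}}^\alpha(t)]^r}{(a_1-a_2)^r}
=\sum_{m=0}^{r}\binom{r}{m}(-1)^{r-m}\frac{a_1^m a_2^{r-m}}{(a_1-a_2)^r}\cdot\frac{[\widetilde{\mathcal{M}}^\alpha_1(t)]^m}{a_1^m}\cdot\frac{[\widetilde{\mathcal{M}}^\alpha_2(t)]^{r-m}}{a_2^{r-m}}.
\end{equation*}
By Stage 1 and independence of the two NHGFCPs, each random factor $[\widetilde{\mathcal{M}}^\alpha_i(t)]^s/a_i^s$ tends to $1$ in probability as $\Lambda_j(t),T_j(t)\to\infty$. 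Then the continuous mapping theorem, together with the deterministic identity $\sum_m\binom{r}{m}(-1)^{r-m}a_1^m a_2^{r-m}=(a_1-a_2)^r$, will force the right-hand side to converge in probability to~$1$.

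The main obstacle is controlling the deterministic coefficients $a_1^m a_2^{r-m}/(a_1-a_2)^r$. These stay bounded when the driving rates $\Lambda_j(t)$ and $T_j(t)$ diverge at asymptotically different rates, so that $|a_1-a_2|$ is of the same order as $\max(a_1,a_2)$; the borderline symmetric case is problematic because it can produce catastrophic cancellation in the denominator. I would resolve this either by imposing (or implicitly assuming) such a non-degeneracy condition on the Weibull-type rate functions, or by substituting a direct Chebyshev argument in that regime: namely, showing $\mathbb{V}(\widetilde{\mathcalboondox{S}}^\alpha(t))/[\mathbb{E}(\widetilde{\mathcalboondox{S}}^\alpha(t))]^2\to 0$ using the explicit mean and variance formulas derived earlier in this section, from which the $r=1$ version follows and the continuous mapping theorem handles general $r$. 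I expect the comparison of the $(\Lambda(t))^{2\alpha}$ and $(T(t))^{2\alpha}$ variance contributions against the difference $(a_1-a_2)^2$ to be the most delicate calculation.
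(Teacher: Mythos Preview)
Your two–stage architecture (reduce to a single NHGFCP, then combine the two components via the binomial expansion of $(\widetilde{\mathcal{M}}^\alpha_1-\widetilde{\mathcal{M}}^\alpha_2)^r$) is exactly the skeleton the paper uses. The difference lies in Stage~1: instead of your probabilistic route (SLLN on the bounded jumps $X_i$, Anscombe to pass to the random index $\tilde N^\alpha(t)$, then continuous mapping), the paper computes $\mathbb{E}[\widetilde{\mathcal{M}}^\alpha(t)]^r$ explicitly by applying Hoppe's formula to the m.g.f.\ \eqref{mgf.NTFPP}, obtaining a closed multinomial-type expression, and then argues convergence in mean of $[\widetilde{\mathcal{M}}^\alpha(t)]^r/\mathbb{E}([\widetilde{\mathcal{M}}^\alpha(t)]^r)$ by dominated convergence plus the large-argument asymptotics of the generalized Mittag--Leffler function, mirroring Theorem~\ref{convergence.NTFPP}. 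Your argument is more conceptual and avoids the heavy special-function machinery; the paper's is more self-contained within the analytic framework already set up.

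On Stage~2 you are being more careful than the paper. The paper simply writes the binomial expansion and asserts termwise convergence in probability to $[\mathbb{E}(\widetilde{\mathcalboondox{S}}^\alpha(t))]^r$, without isolating the deterministic weights $a_1^{m}a_2^{r-m}/(a_1-a_2)^r$ or discussing their possible blow-up when $a_1$ and $a_2$ are close. Your observation that this step needs a non-degeneracy assumption on the relative growth of $\Lambda(t)$ and $T(t)$ (or a separate Chebyshev argument in the balanced regime) is a genuine refinement; the paper does not address it.
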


\begin{proof} 
Using Hoppe's formula and \eqref{m.g.f..NTFPP}, the $r^{th}$ order moment of the process $\{\widetilde{\mathcal{M}}^{\alpha}_1(t)\}_{t\ge 0}$ is
\begin{align*}
\mathbb{E}[\widetilde{\mathcal{M}}^{\alpha}_1(t)]^r &= \frac{d^r}{ds^r}E_{\alpha, 1}\left(\sum_{j=1}^{k}(\Lambda_j(t)(e^{sj}-1)(\Lambda(t))^{\alpha-1}\right)\Bigg |_{s=0} \\
&= \sum_{n=0}^r\frac{(E_{\alpha,1}(x))^{(n)}\big|_{x = \sum_{j=1}^{k}\Lambda_j(t)(e^{sj}-1)(\Lambda(t))^{\alpha-1}}}{n!}A_{r,n}\left(\sum_{j=1}^{k}(\Lambda_j(t)(e^{sj}-1)(\Lambda(t))^{\alpha-1}\right)\Bigg |_{s=0},  
\end{align*}
where $f^{(n)}$denotes the $n$th derivative of $f$ and
\begin{align*}
A_{r,n}\left(\sum_{j=1}^{k}\Lambda_j(t)(e^{sj}-1)(\Lambda(t))^{\alpha-1}\right) &= \sum_{h=0}^n\binom{n}{h}\left(-\sum_{j=1}^{k}\Lambda_j(t)(e^{sj}-1)(\Lambda(t))^{\alpha-1}\right)^{n-h} \nonumber \\
&\times\frac{d^r}{ds^r}\left(\sum_{j=1}^{k}\Lambda_j(t)(e^{sj}-1)(\Lambda(t))^{\alpha-1}\right)^{h}. 
\end{align*}
After some algebra, we obtain 
\begin{equation}\label{moment.NHGFCP}
\mathbb{E}[\widetilde{\mathcal{M}}^{\alpha}(t)]^r =\sum_{n=0}^{r}\frac{\left(\Lambda(t)\right)^{n\alpha}}{\Gamma(n\alpha+1)}\sum_{i_1+\cdots+i_k=n}\binom{n}{i_1,\ldots,i_k}\prod_{l=1}^k(\Lambda_l(t))^{i_l}\times\sum_{j_1+\cdots+j_k=r}\binom{r}{j_1,\ldots,j_k}\left[\prod_{l=1}^k\frac{d^{j_l}}{ds^{j_l}}(e^{ls}-1)^{i_l}\right]\Bigg |_{s=0}.
\end{equation}
Using \eqref{moment.NHGFCP}, it can be shown that $\frac{[\widetilde{\mathcal{M}}^\alpha_1(t)]^r}{\mathbb{E}([\widetilde{\mathcal{M}}^\alpha_1(t)]^r)}$ converges in mean to $1$ as $\Lambda_j(t)\rightarrow\infty$ similar to Theorem \ref{convergence.NTFPP}. This implies $\frac{[\widetilde{\mathcal{M}}^\alpha_1(t)]^r}{\mathbb{E}([\widetilde{\mathcal{M}}^\alpha_1(t)]^r)}\xrightarrow{p} 1$. Similarly it can be shown that $\frac{[\widetilde{\mathcal{M}}^\alpha_2(t)]^r}{\mathbb{E}([\widetilde{\mathcal{M}}^\alpha_2(t)]^r)}\xrightarrow{p} 1$ as $T_j(t)\rightarrow\infty$. In other words, $[\widetilde{\mathcal{M}}^\alpha_1(t)]^r\xrightarrow{p}\mathbb{E}([\widetilde{\mathcal{M}}^\alpha_1(t)]^r)$ and $[\widetilde{\mathcal{M}}^\alpha_1(t)]^r\xrightarrow{p}\mathbb{E}([\widetilde{\mathcal{M}}^\alpha_1(t)]^r)$. Hence for $r\in\mathbb{N}$ and fixed $t>0$, we have
\begin{align*}
[\widetilde{\mathcalboondox{S}}^\alpha(t)]^r = \sum_{j=0}^r\binom{r}{j}[\widetilde{\mathcal{M}}^\alpha_1(t)]^{r-j}[\widetilde{\mathcal{M}}^\alpha_2(t)]^{j}&\xrightarrow{p}\sum_{j=0}^r\binom{r}{j}\mathbb{E}[\widetilde{\mathcal{M}}^\alpha_1(t)]^{r-j}\mathbb{E}[\widetilde{\mathcal{M}}^\alpha_2(t)]^{j} = [\mathbb{E}(\widetilde{\mathcalboondox{S}}^\alpha(t))]^r  
\end{align*}
as $\Lambda_j(t),T_j(t)\rightarrow\infty$ for $1\le j\le k$, thereby proving the result.
\end{proof}

Proposition \ref{moment.NTFPP} and Theorem \ref{convergence.NHGFSP} show that the processes $\{\tilde{N}^\alpha(t)\}^r_{t\ge 0}$ and $\{\widetilde{\mathcalboondox{S}}^\alpha(t)\}^r_{t\ge 0}$ where $r\in\mathbb{N}$ exhibit cut-off behaviour at mean times (see \citet{Barrera2009}) with respect to the relevant parameters, that is, they converge abruptly to equlibrium as the parameters grow large. 

\section{Running average processes of GSP and GCP}\label{sec 7}
In this section, we introduce the running average processes of GSP and GCP and study their properties.
\subsection{Running Average of GSP}
Let $\{\mathcal{S}(t)\}_{t\ge 0}$ be a GSP such that $\mathcal{S}(t)=\mathbb{M}_1(t)-\mathbb{M}_2(t)$ where $\{\mathbb{M}_1(t)\}_{t\ge 0}$ and $\{\mathbb{M}_2(t)\}_{t\ge 0}$ are independent GCPs with intensity parameters $\lambda_1,\lambda_2,\ldots,\lambda_k$ and $\mu_1,\mu_2,\ldots,\mu_k$ respectively.
\begin{definition}
We define the running average process of a GSP by taking the time-scaled integral of its path (see \citet{Xia2018, Gupta2020}) as follows:
\begin{equation*}
\mathcal{S}_A(t)=\frac{1}{t}\int_{0}^{t}\mathcal{S}(s)ds. \label{78}
\end{equation*}
\end{definition}
Note that $\{\mathcal{S}_A(t)\}_{t \geq 0}$ satisfies the following differential equation with initial condition $\mathcal{S}_A(0)=0$:
\begin{equation*}
    \frac{d}{dt}\left(\mathcal{S}_A(t)\right)=\frac{1}{t}\mathcal{S}_A(t)-\frac{1}{t^2}\int_{0}^{t}\mathcal{S}_A(s)ds    
\end{equation*}
which shows that it has continuous sample paths of bounded total variation. 
\begin{proposition}\label{ch.gsp}
The characteristic function of the running average process of a GSP is given by
\begin{align*}
    \phi_{\mathcal{S}_A(t)}(u)=e^{t\left\{\sum_{j=1}^{k}\lambda_j\left(\frac{e^{iuj}-1}{iuj}-1\right)+\sum_{j=1}^{k}\mu_j\left(\frac{1-e^{-iuj}}{iuj}-1\right)\right\}}.
\end{align*}
\end{proposition}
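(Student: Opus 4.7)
The plan is to apply the Xia--Grandell lemma stated earlier in the excerpt, which computes the characteristic function of the Riemann integral of a L\'evy process in terms of the characteristic function of its unit-time value. First I would observe that $\{\mathcal{S}(t)\}_{t\geq 0}$, being the difference of two independent GCPs, is itself a L\'evy process; each GCP is a compound Poisson process with jumps in $\{1,2,\ldots,k\}$, and subtracting an independent copy preserves stationarity and independence of increments.

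Next, define the auxiliary integrated process $Y(t):=\int_{0}^{t}\mathcal{S}(s)\,ds = t\,\mathcal{S}_{A}(t)$. Since $\mathcal{S}_{A}(t) = Y(t)/t$, we have the scaling identity $\phi_{\mathcal{S}_{A}(t)}(u) = \phi_{Y(t)}(u/t)$, so it suffices to compute $\phi_{Y(t)}$ via the lemma and then rescale the argument.

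The characteristic function of $\mathcal{S}(1)$ is obtained by substituting $u\mapsto e^{iu}$ in the p.g.f.\ of the GSP (the unit-time case of \eqref{2} with constant intensities), giving
\begin{equation*}
\log\phi_{\mathcal{S}(1)}(u) = \sum_{j=1}^{k}\lambda_{j}(e^{iuj}-1) + \sum_{j=1}^{k}\mu_{j}(e^{-iuj}-1).
\end{equation*}
Plugging this into the lemma and then rescaling yields
\begin{equation*}
\phi_{\mathcal{S}_{A}(t)}(u) = \exp\!\left(t\int_{0}^{1}\log\phi_{\mathcal{S}(1)}(uz)\,dz\right),
\end{equation*}
because the factors of $t$ inside the lemma combine with the $u/t$ rescaling.

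The only remaining work is to evaluate the elementary integrals $\int_{0}^{1}(e^{iujz}-1)\,dz$ and $\int_{0}^{1}(e^{-iujz}-1)\,dz$, which produce $(e^{iuj}-1)/(iuj)-1$ and $(1-e^{-iuj})/(iuj)-1$ respectively. Summing the contributions over $j$ gives precisely the exponent in the stated formula. There is no conceptual obstacle here; the only mildly delicate point is correctly tracking the two factors of $t$ (one in the prefactor of the lemma, one absorbed by the scaling $Y(t)/t$), so that the dependence on $t$ in the final exponent is linear while the integrand depends only on $uz$.
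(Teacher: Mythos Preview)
Your proposal is correct and follows essentially the same route as the paper's own proof: define the integrated process, apply the Xia lemma to get $\phi_{Y(t)}(u)=\exp\bigl(t\int_0^1\log\phi_{\mathcal{S}(1)}(tuz)\,dz\bigr)$, rescale via $\phi_{\mathcal{S}_A(t)}(u)=\phi_{Y(t)}(u/t)$ so that the argument becomes $uz$, and evaluate the two elementary exponential integrals. Your explicit justification that $\{\mathcal{S}(t)\}$ is L\'evy and your tracking of the two factors of $t$ are, if anything, more careful than the paper's version.
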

\begin{proof}
Let us denote $K(t)=\int_{0}^{t}\mathcal{S}(s)ds$. Since $\{\mathcal{S}(t)\}_{t\ge 0}$ is a L\'{e}vy process, using Lemma \ref{running average lemma}, the characteristic function of $K(t)$ is given as
\begin{equation*}
    \phi_{K(t)}(u)=e^{t\left(\int_{0}^{1}\log \phi_{\mathcal{S}(1)}(tuz)dz\right)}.
\end{equation*}
Using \eqref{2}, the characteristic function of $\{\mathcal{S}(t)\}_{t\ge 0}$ is
\begin{equation*}
\phi_{\mathcal{S}(t)}(u) = \exp\left(\sum_{j=1}^{k}t\left(\lambda_{j}(e^{iuj}-1)+\mu_{j}(e^{-iuj}-1)\right)\right)
\end{equation*}
which implies
\begin{equation*}
\int_{0}^{1}\log \phi_{\mathcal{S}(1)}(tuz)dz=\sum_{j=1}^{k}\lambda_j\left(\frac{e^{ituj}-1}{ituj}-1\right)+\sum_{j=1}^{k}\mu_j\left(\frac{1-e^{-ituj}}{ituj}-1\right). 
\end{equation*}
Hence the characteristic function of $\{\mathcal{S}_A(t)\}_{t\ge 0}$ is 
\begin{equation*}
\phi_{\mathcal{S}_{A}(t)}(u)=\phi_{K(t)/t}(u)=\phi_{K(t)}(u/t)=e^{t\left(\int_{0}^{1}\log \phi_{\mathcal{S}(1)}(uz)dz\right)}=e^{t\left\{\sum_{j=1}^{k}\lambda_j\left(\frac{e^{iuj}-1}{iuj}-1\right)+\sum_{j=1}^{k}\mu_j\left(\frac{1-e^{-iuj}}{iuj}-1\right)\right\}}.
\end{equation*}
\end{proof} 
Note that $\phi_{\mathcal{S}_{A}(t)}(u)$ has a singularity at $u=0$, which can be removed by defining $\phi_{\mathcal{S}_A(t)}(0)=1$.
\begin{proposition}
    The L\'{e}vy measure for a GSP (see \citet{Kataria2022b}) is given by
\begin{equation*}
    \nu_{\mathcal{S}}(x)=\sum_{i=1}^{k}\lambda_i\delta_i(x)+\sum_{i=1}^{k}\mu_i\delta_i(x)\label{75}
\end{equation*}
where $\delta_{i}(.)$ is the Dirac measure.
\end{proposition}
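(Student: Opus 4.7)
The plan is to derive $\nu_{\mathcal{S}}$ directly from the characteristic function via the L\'evy--Khintchine representation. Since $\{\mathcal{S}(t)\}_{t\ge 0}$ is the difference of two independent compound-Poisson-type GCPs, it is a pure-jump L\'evy process with no drift and no Gaussian component; moreover the jumps are supported on the finite set $\{\pm 1,\dots,\pm k\}$, so any truncation in the L\'evy--Khintchine integral is unnecessary and the characteristic exponent must take the form
\begin{equation*}
\log\phi_{\mathcal{S}(1)}(u) \;=\; \int_{\R\setminus\{0\}}(e^{iux}-1)\,\nu_{\mathcal{S}}(dx).
\end{equation*}

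First, I would specialize the PGF in \eqref{2} to the homogeneous case $\Lambda_j(t)=\lambda_j t$, $T_j(t)=\mu_j t$, and substitute $u\mapsto e^{iu}$ to obtain
\begin{equation*}
\phi_{\mathcal{S}(t)}(u) \;=\; \exp\!\left(t\sum_{j=1}^{k}\bigl[\lambda_j(e^{iuj}-1)+\mu_j(e^{-iuj}-1)\bigr]\right),
\end{equation*}
so that at $t=1$,
\begin{equation*}
\log\phi_{\mathcal{S}(1)}(u) \;=\; \sum_{j=1}^{k}\lambda_j(e^{iuj}-1)+\sum_{j=1}^{k}\mu_j(e^{-iuj}-1).
\end{equation*}

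Next, I would recognize each summand as the transform of a point mass: $e^{iuj}-1=\int(e^{iux}-1)\,\delta_j(dx)$ and $e^{-iuj}-1=\int(e^{iux}-1)\,\delta_{-j}(dx)$. Setting $\mu=\sum_{i=1}^{k}\lambda_i\delta_i+\sum_{i=1}^{k}\mu_i\delta_{-i}$, we get $\log\phi_{\mathcal{S}(1)}(u)=\int(e^{iux}-1)\,\mu(dx)$, and uniqueness of the L\'evy--Khintchine triplet for an infinitely divisible law then identifies $\mu$ as the L\'evy measure of $\{\mathcal{S}(t)\}_{t\ge 0}$.

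The only real obstacle is a bookkeeping one: the positive jumps of $\mathcal{S}$ of size $i$ arise from $\mathbb{M}_1$ with rate $\lambda_i$, while the jumps of size $-i$ arise from $\mathbb{M}_2$ with rate $\mu_i$. Hence the mass $\mu_i$ should sit at $-i$, i.e.\ the second sum in the statement should read $\sum_{i=1}^{k}\mu_i\delta_{-i}(x)$ rather than $\sum_{i=1}^{k}\mu_i\delta_i(x)$; I would flag this apparent typo in the statement. Apart from this clarification, the proof is a one-line reading off of the characteristic exponent.
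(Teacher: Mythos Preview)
Your argument is correct and, in fact, more self-contained than the paper's. The paper does not actually carry out a computation: its proof is a one-sentence pointer, saying that the result follows from the independence of the two GCPs in the definition of GSP together with the known L\'evy measure of a GCP (citing \citet{Kataria2022a} and Proposition~7 of \citet{Gupta2020}). In other words, the paper relies on the additivity of L\'evy measures under independent sums, applied to $\mathbb{M}_1$ and $-\mathbb{M}_2$. You instead read the L\'evy measure directly off the characteristic exponent via L\'evy--Khintchine and invoke uniqueness of the triplet; this is a cleaner, stand-alone derivation that avoids any external reference and makes explicit why the jump measure is supported on $\{\pm1,\dots,\pm k\}$.

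Your observation about the sign is also correct: the jumps contributed by $\mathbb{M}_2$ to $\mathcal{S}=\mathbb{M}_1-\mathbb{M}_2$ are negative, so the second sum in the displayed formula should be $\sum_{i=1}^{k}\mu_i\,\delta_{-i}$ rather than $\sum_{i=1}^{k}\mu_i\,\delta_i$. As stated, the formula would place all mass at positive integers and would not reproduce the characteristic exponent computed just above Proposition~\ref{ch.gsp}; the paper's own characteristic function $\phi_{\mathcal{S}(t)}(u)=\exp\bigl(t\sum_j[\lambda_j(e^{iuj}-1)+\mu_j(e^{-iuj}-1)]\bigr)$ confirms your correction.
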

\begin{proof}
    The proof follows using the independence of two GCPs used in the definition of GSP (see \citet{Kataria2022a} and Proposition 7 of \citet{Gupta2020}).
\end{proof}
The following result provides the distribution of the running average process of a GSP.
\begin{theorem}\label{compound.gsp}
Let $\{Y(t)\}_{t\ge 0}$ be a compound Poisson process given by
\begin{equation*}
   Y(t)=\sum_{i=1}^{N(t)}X_i
\end{equation*}
where $\{N(t)\}_{t\ge 0}$ is a Poisson process with intensity parameter $\Lambda+T$, $\Lambda=\sum_{j=1}^k\lambda_j$ and $T=\sum_{j=1}^k\mu_j$. If $X_i$s are i.i.d. random variables with mixed double uniform distribution function which are independent of $\{N(t)\}_{t\ge 0}$, then
\begin{equation*}
Y(t) \overset{d}{=} \mathcal{S}_A(t).
\end{equation*}
\end{theorem}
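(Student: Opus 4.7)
The plan is to prove the equality in distribution by matching characteristic functions, using Proposition \ref{ch.gsp} for $\mathcal{S}_A(t)$ and the standard compound Poisson formula for $Y(t)$. Since both objects are real-valued random variables, coincidence of characteristic functions is sufficient.

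First I would recall that for a compound Poisson process $Y(t) = \sum_{i=1}^{N(t)} X_i$ with $\{N(t)\}_{t\ge 0}$ a Poisson process of rate $\beta = \Lambda + T$ independent of the i.i.d. jumps $\{X_i\}$, conditioning on $N(t)$ gives
\begin{equation*}
\phi_{Y(t)}(u) = \mathbb{E}\bigl[\phi_{X_1}(u)^{N(t)}\bigr] = \exp\bigl((\Lambda+T)t\,(\phi_{X_1}(u)-1)\bigr).
\end{equation*}
Next, I would interpret the mixed double uniform distribution as the finite mixture placing mass $\lambda_j/(\Lambda+T)$ on the uniform law $\mathrm{Unif}[0,j]$ and mass $\mu_j/(\Lambda+T)$ on the uniform law $\mathrm{Unif}[-j,0]$ for each $j=1,2,\ldots,k$; the weights sum to $1$ by definition of $\Lambda$ and $T$. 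Using the elementary fact that a $\mathrm{Unif}[a,b]$ variable has characteristic function $(e^{iub}-e^{iua})/(iu(b-a))$, this yields
\begin{equation*}
\phi_{X_1}(u) = \frac{1}{\Lambda+T}\sum_{j=1}^{k}\left[\lambda_j\,\frac{e^{iuj}-1}{iuj} + \mu_j\,\frac{1-e^{-iuj}}{iuj}\right].
\end{equation*}

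Substituting this expression into the compound Poisson characteristic function, the factor $\Lambda+T$ cancels and the constant terms regroup to give
\begin{equation*}
\phi_{Y(t)}(u) = \exp\left(t\left\{\sum_{j=1}^{k}\lambda_j\!\left(\frac{e^{iuj}-1}{iuj}-1\right) + \sum_{j=1}^{k}\mu_j\!\left(\frac{1-e^{-iuj}}{iuj}-1\right)\right\}\right),
\end{equation*}
which is exactly $\phi_{\mathcal{S}_A(t)}(u)$ from Proposition \ref{ch.gsp}. By the uniqueness theorem for characteristic functions, $Y(t) \stackrel{d}{=} \mathcal{S}_A(t)$ for each $t\ge 0$.

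No genuine obstacle arises in this argument; the only point requiring care is the unpacking of the term \emph{mixed double uniform distribution}, i.e.\ verifying that the natural mixture weights $\lambda_j/(\Lambda+T)$ and $\mu_j/(\Lambda+T)$ are exactly those needed to make the two computations agree. Once this identification is in place, the proof reduces to a routine characteristic-function matching.
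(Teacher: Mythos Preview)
Your proposal is correct and follows essentially the same approach as the paper: both compute the characteristic function of the compound Poisson process $Y(t)$ via the standard conditioning formula $\exp\bigl((\Lambda+T)t(\phi_{X_1}(u)-1)\bigr)$, identify $\phi_{X_1}(u)$ from the mixture-of-uniforms description, and match the result against Proposition~\ref{ch.gsp}. Your interpretation of the mixed double uniform distribution (weights $\lambda_j/(\Lambda+T)$ on $\mathrm{Unif}[0,j]$ and $\mu_j/(\Lambda+T)$ on $\mathrm{Unif}[-j,0]$) is exactly the one the paper uses.
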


\begin{proof}
Since $X_i$s are i.i.d. random variables with mixed double uniform distribution, their pdf is
\begin{align*}
    f_{X_1}(x)=&wf_{U[0,j]}(x)I_{[0,j]}(x)+(1-w)f_{U[-j,0]}(x)I_{[-j,0]}(x).
\end{align*}
Let $w=\frac{\Lambda}{\Lambda+T}$ so $1-w=\frac{T}{\Lambda+T}$. For $-j\leq x\leq j$ and $1\le j\le k$, we have
\begin{align*}
    f_{X_1}(x)&=\frac{\Lambda}{\Lambda+T}f_{U[0,j]}(x)I_{[0,j]}(x)+\frac{T}{\Lambda+T}f_{U[-j,0]}(x)I_{[-j,0]}(x)\,\\
    &=\frac{\Lambda}{\Lambda+T}I_{[0,j]}(x)\frac{1}{j}+\frac{T}{\Lambda+T}I_{[-j,0]}(x)\frac{1}{j} \qquad\qquad \\
    &=\frac{1}{\Lambda+T}\sum_{j=1}^{k}\lambda_j\frac{1}{j}I_{[0,j]}(x)+\frac{1}{\Lambda+T}\sum_{j=1}^{k}\mu_j\frac{1}{j}I_{[-j,0]}(x).
\end{align*}
The characteristic function of $X_1$ is given by
\begin{align*}
    \phi_{X_1}(u)=\int_{0}^{j}e^{iux}f_X(x)dx
    =&\frac{1}{\Lambda+T}\sum_{j=1}^{k}\int_{0}^{j}\frac{e^{iux}\lambda_j}{j}dx+\frac{1}{\Lambda+T}\sum_{j=1}^{k}\int_{-j}^{0}\frac{e^{iux}\mu_j}{j}dx\\
    =&\frac{1}{\Lambda+T}\sum_{j=1}^{k}\lambda_j\left(\frac{e^{iuj}-1}{iuj}\right)+\frac{1}{\Lambda+T}\sum_{j=1}^{k}\mu_j\left(\frac{1-e^{-iuj}}{iuj}\right).
\end{align*}
So the characteristic function of $Y(t)$ is
\begin{align*}
    \phi_{Y(t)}(u)
    =\mathbb{E}\left[\exp\left(iu\sum_{i=1}^{N(t)}X_i\right)\right]
    &= \mathbb{E}\left[\mathbb{E}\left(\exp\left(iu\sum_{i=1}^{N(t)}X_i\right)\big|N(t)\right)\right]\\
    &= \sum_{n=0}^{\infty}\mathbb{E}\left(\exp\left(iu\sum_{i=1}^{n}X_i\right)\right)P\left(N(t)=n\right) \\
    &=\sum_{n=0}^{\infty}\left(\phi_{X_1}(u)\right)^n P\left(N(t)=n\right) \,\,= P^*\left(\phi_{X_1}(u)\right)   
\end{align*}
where $P^*(.)$ is the p.g.f. of $\{N(t)\}_{t\ge 0}$. Since $N(t)\sim \text{Poisson}\left((\Lambda+T) t\right))$, we have $P^*(s)=e^{-(\Lambda+T)t(1-s)}$. Hence
\begin{equation*}
    \phi_{Y(t)}(u)=e^{-(\Lambda+T) t\left(1-\phi_{X_1}(u)\right)}
    =e^{-(\Lambda+T) t\left(1-\frac{1}{\Lambda+T}\sum_{j=1}^{k}\lambda_j\left(\frac{e^{iuj}-1}{iuj}\right)-\frac{1}{\Lambda+T}\sum_{j=1}^{k}\mu_j\left(\frac{1-e^{-iuj}}{iuj}\right)\right)}\\
    = \phi_{\mathcal{S}_A(t)}(u), 
\end{equation*}
thereby proving the result.
\end{proof}

\begin{corollary}
If $k=1$, then the running average process of a GSP and hence its compound Poisson representation reduce to those of a Skellam process. However, if $\lambda_1=\cdots=\lambda_k$ and $\mu_1=\cdots=\mu_k$, then the running average process of a GSP and hence its compound Poisson representation reduce to those of a Skellam process of order $k$ (see \citet{Gupta2020}). 
\end{corollary}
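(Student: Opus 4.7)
The plan is to verify both reductions directly at the level of the characteristic function from Proposition \ref{ch.gsp} and the compound-Poisson representation from Theorem \ref{compound.gsp}, since these two objects completely determine the running-average process in distribution.

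First I would handle the case $k=1$. Substituting $k=1$ into Proposition \ref{ch.gsp} collapses the sums to single terms, yielding
\begin{equation*}
\phi_{\mathcal{S}_A(t)}(u)=\exp\left\{t\lambda_1\left(\tfrac{e^{iu}-1}{iu}-1\right)+t\mu_1\left(\tfrac{1-e^{-iu}}{iu}-1\right)\right\},
\end{equation*}
which is precisely the characteristic function of the running average of a Skellam process with rates $\lambda_1,\mu_1$ (obtained by applying the running-average lemma in section 2.7 to a Skellam process, equivalently to a GSP with $k=1$). On the compound-Poisson side, setting $k=1$ in Theorem \ref{compound.gsp} makes $\Lambda=\lambda_1$, $T=\mu_1$, and reduces the mixed double-uniform law of $X_1$ to a two-sided uniform on $[-1,1]$ with atom mass $\lambda_1/(\lambda_1+\mu_1)$ on $[0,1]$ and $\mu_1/(\lambda_1+\mu_1)$ on $[-1,0]$, which is the standard compound-Poisson representation of the Skellam running average.

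Next I would handle the case $\lambda_1=\cdots=\lambda_k=\lambda$, $\mu_1=\cdots=\mu_k=\mu$. Plugging these equalities into Proposition \ref{ch.gsp} gives
\begin{equation*}
\phi_{\mathcal{S}_A(t)}(u)=\exp\left\{t\lambda\sum_{j=1}^{k}\left(\tfrac{e^{iuj}-1}{iuj}-1\right)+t\mu\sum_{j=1}^{k}\left(\tfrac{1-e^{-iuj}}{iuj}-1\right)\right\},
\end{equation*}
which matches the characteristic function of the running average of a Skellam process of order $k$ as in \citet{Gupta2020}. Correspondingly, in Theorem \ref{compound.gsp} the intensity becomes $k(\lambda+\mu)t$, and the mixing weights for the uniform components on $[0,j]$ and $[-j,0]$ become proportional to $\lambda$ and $\mu$ respectively, independent of $j$, which reproduces exactly the compound-Poisson representation of the running average of a Skellam process of order $k$.

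No step here presents any real obstacle; the corollary is essentially a direct specialization of Proposition \ref{ch.gsp} and Theorem \ref{compound.gsp}. The only mild subtlety will be citing the known forms for the running-average characteristic function of the Skellam process (respectively the Skellam process of order $k$) from \citet{Gupta2020}, so that the two expressions can be compared term-by-term; once that is in place, the match is immediate and the proof is a one-line identification in each case.
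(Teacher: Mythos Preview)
Your proposal is correct and is exactly the direct specialization the paper has in mind; the paper states this corollary without proof, treating it as an immediate consequence of Proposition~\ref{ch.gsp} and Theorem~\ref{compound.gsp}, and your term-by-term substitution into those two results is the natural way to make that explicit.
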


The $r^{th}$ order moment of $X_i$ can be calculated using the definition
\begin{equation*}
 m_r=\mathbb{E}[X_i^r]=(-i)^r\frac{d^r\phi_{X_i}(u)}{du^r}
\end{equation*}
and the Taylor series expansion of the characteristic function $\phi_{X_1}(u)$ around 0 such that
\begin{equation*}
    \frac{(e^{iuj}-1)}{iuj}=1+\sum_{n=1}^{\infty}\frac{(iuj)^n}{(n+1)!}\quad \And \quad \frac{(1-e^{-iuj})}{iuj}=1+\sum_{n=1}^{\infty}\frac{(-iuj)^n}{(n+1)!}.
\end{equation*}
Using the properties of a compound Poisson process, we have
\begin{align*}
    \mathbb{E}[\mathcal{S}_A(t)]&=\mathbb{E}[N(t)]\mathbb{E}[X_1]=\frac{t}{2}\sum_{j=1}^{k}j\left(\lambda_j-\mu_j\right),\quad
    \mathbb{V}[\mathcal{S}_A(t)]=\mathbb{E}[N(t)]\mathbb{E}[X_1^2]=\frac{t}{3}\sum_{j=1}^{k}j^2\left(\lambda_j+\mu_j\right),\\
    \text{Cov}[\mathcal{S}_A(s),\mathcal{S}_A(t)]
    &=\mathbb{E}[\mathcal{S}_A(s)]\mathbb{E}[\mathcal{S}_A(t-s)]+\mathbb{E}[(\mathcal{S}_A(s))^2]-\mathbb{E}[\mathcal{S}_A(s)]\mathbb{E}[\mathcal{S}_A(t)]=\frac{s}{3}\sum_{j=1}^{k}j^2\left(\lambda_j+\mu_j\right),~s<t.
\end{align*}
The mean and variance of GSP are $\mathbb{E}[\mathcal{S}(t)]=t\sum_{j=1}^{k}j\left(\lambda_j-\mu_j\right)$ and $\mathbb{V}[\mathcal{S}(t)]=t\sum_{j=1}^{k}j^2\left(\lambda_j+\mu_j\right)$ so
\begin{equation*}
    \frac{\mathbb{E}[\mathcal{S}_A(t)]}{\mathbb{E}[\mathcal{S}(t)]}=\frac{1}{2} \qquad\text{and} \qquad \frac{\mathbb{V}[\mathcal{S}_A(t)]}{\mathbb{V}[\mathcal{S}(t)]}=\frac{1}{3}.
\end{equation*}    
Note that
\begin{equation*}
    \mathbb{V}[\mathcal{S}_A(t)]-\mathbb{E}[\mathcal{S}_A(t)]=\frac{t}{3}\sum_{j=1}^{k}j^2\left(\lambda_j+\mu_j\right)-\frac{t}{2}\sum_{j=1}^{k}j\left(\lambda_j-\mu_j\right)
    =t\sum_{j=1}^{k}\frac{(2j^2-3j)\lambda_j+(2j^2+3j)\mu_j}{6}.
\end{equation*}
For $k=1$, we have $\mathbb{V}[\mathcal{S}_A(t)]-\mathbb{E}[\mathcal{S}_A(t)]>0 (<0)$ if $5\mu_1>\lambda_1 (5\mu_1<\lambda_1)$. Accordingly the running average process of a Poisson process is over-dispersed (under-dispersed). Next for $k>1$ with $\sum_{j=1}^{k}\left(2j^2+3j\right)\mu_j>\sum_{j=1}^{k}\left(3j-2j^2\right)\lambda_j$, we have $\mathbb{V}[\mathcal{S}_A(t)]-\mathbb{E}[\mathcal{S}_A(t)]>0$ which implies the running average process of a GSP is over-dispersed. Otherwise, it is under-dispersed. The next result shows the dependence structure of a GSP. 
\begin{proposition}\label{lrd.gsp}
    The running average process of a GSP has the LRD property.
\end{proposition}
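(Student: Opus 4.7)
The plan is to verify the decay rate defining LRD in section \ref{LRD/SRD} by computing the correlation function of $\{\mathcal{S}_A(t)\}_{t\ge 0}$ directly from the variance and covariance expressions derived immediately before Proposition \ref{lrd.gsp}. Since both of those quantities are already at hand, the argument should reduce to an elementary asymptotic computation.

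First I would recall that for $0<s<t$ one has $\mathbb{V}[\mathcal{S}_A(t)]=\tfrac{t}{3}\sum_{j=1}^{k}j^{2}(\lambda_j+\mu_j)$ and $\mathrm{Cov}[\mathcal{S}_A(s),\mathcal{S}_A(t)]=\tfrac{s}{3}\sum_{j=1}^{k}j^{2}(\lambda_j+\mu_j)$. Plugging these into the definition of the correlation, the common positive factor $\tfrac{1}{3}\sum_{j=1}^{k}j^{2}(\lambda_j+\mu_j)$ cancels and one gets
\[
\mathrm{Corr}[\mathcal{S}_A(s),\mathcal{S}_A(t)]=\sqrt{\frac{s}{t}}.
\]
Then for fixed $s>0$, as $t\to\infty$, this is precisely of the form $c(s)\,t^{-\theta}$ with $c(s)=\sqrt{s}>0$ and $\theta=1/2$. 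Since $\theta=1/2\in(0,1)$, the criterion of section \ref{LRD/SRD} applies and yields the LRD property.

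There is no real obstacle here beyond trusting the moment formulas just derived; the only care needed is to note that the sums in numerator and denominator coincide so that the constant cancels cleanly, and to verify that $\theta=1/2$ falls strictly in the LRD window $(0,1)$ rather than the SRD window $(1,2)$. If one instead preferred to bypass the covariance formula printed in the paper and compute $\mathrm{Cov}[\mathcal{S}_A(s),\mathcal{S}_A(t)]$ from first principles via $\mathcal{S}_A(t)=\tfrac{1}{t}\int_0^t\mathcal{S}(u)\,du$ and the covariance of the GSP $\mathrm{Cov}[\mathcal{S}(u),\mathcal{S}(v)]=(u\wedge v)\sum_{j=1}^k j^2(\lambda_j+\mu_j)$, the resulting asymptotic rate $t^{-1/2}$ would be the same, confirming that the LRD conclusion is robust to the particular representation used.
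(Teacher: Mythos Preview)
Your proposal is correct and essentially reproduces the paper's own proof: both compute $\mathrm{Corr}[\mathcal{S}_A(s),\mathcal{S}_A(t)]=\sqrt{s/t}$ directly from the variance and covariance formulas, identify $c(s)=\sqrt{s}$ and $\theta=1/2\in(0,1)$, and conclude LRD. The extra remark about recomputing the covariance from $\mathrm{Cov}[\mathcal{S}(u),\mathcal{S}(v)]$ is a harmless sanity check not present in the paper.
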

\begin{proof}
    For $s<t$, we have
    \begin{align*}
\text{Corr}\left(\mathcal{S}_A(s),\mathcal{S}_A(t)\right)=\frac{\text{Cov}[\mathcal{S}_A(s),\mathcal{S}_A(t)]}{\sqrt{\mathbb{V}[\mathcal{S}_A(s)]}\sqrt{\mathbb{V}[\mathcal{S}_A(t)]}}
    =&\frac{\frac{s}{3}\sum_{j=1}^{k}j^2\left(\lambda_j+\mu_j\right)}{\sqrt{\frac{s}{3}\sum_{j=1}^{k}j^2\left(\lambda_j+\mu_j\right)}\sqrt{\frac{t}{3}\sum_{j=1}^{k}j^2\left(\lambda_j+\mu_j\right)}}\\
    =&\sqrt{s}t^{-\frac{1}{2}}\,\,
    =C(s)t^{-\theta}, 
\end{align*}
where $C(s)=\sqrt{s}$ and $\theta=\frac{1}{2}<1$. Hence $\{\mathcal{S}_A(t)\}_{t\geq 0}$ has the LRD property.
\end{proof}

\begin{remark}
    As NGSP and NGFSP are not L\'{e}vy processes due to non-stationary state probabilities and dependent increments, we can not define their L\'{e}vy measures. Moreover Lemma \ref{running average lemma} can't be used to find the distributions of their running average processes.
\end{remark}

\subsection{Running Average of GCP}
Let $\{\mathbb{M}(t)\}_{t\ge 0}$ be a GCP with intensity parameters $\lambda_1,\lambda_2,\ldots,\lambda_k$.
Its running average process $\{\mathbb{M}_A(t)\}_{t \geq 0}$ can be defined by taking the time-scaled integral of its path similar to that of a GSP. It follows that $\{\mathbb{M}_A(t)\}_{t \geq 0}$ has continuous sample paths of bounded total variation.   
\begin{proposition}\label{77}
The characteristic function of the running average process of a GCP is given by
\begin{align*}
    \phi_{\mathbb{M}_A(t)}(u)=e^{t \sum_{j=1}^{k}\lambda_j\left(\frac{e^{iuj}-1}{iuj}-1\right)}.
\end{align*}    
\end{proposition}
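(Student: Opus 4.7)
The plan is to mirror the strategy used in Proposition \ref{ch.gsp}, since the GCP $\{\mathbb{M}(t)\}_{t\geq 0}$ is itself a L\'evy process (it is a compound Poisson process with the multinomial jump structure implicit in the GCP definition), so Lemma \ref{running average lemma} applies directly with $X(t)=\mathbb{M}(t)$. I would set $K(t)=\int_0^t \mathbb{M}(s)\,ds$ and write $\mathbb{M}_A(t)=K(t)/t$, so that $\phi_{\mathbb{M}_A(t)}(u)=\phi_{K(t)}(u/t)$.

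The first step is to record the characteristic function of $\mathbb{M}(1)$. The pgf of GCP is the constant-rate specialization of the NGCP pgf used in Section \ref{sec 3}, namely $G_{\mathbb{M}}(u,t)=\exp\bigl(t\sum_{j=1}^{k}\lambda_j(u^j-1)\bigr)$, which yields
\begin{equation*}
\phi_{\mathbb{M}(1)}(u)=\exp\left(\sum_{j=1}^{k}\lambda_j(e^{iuj}-1)\right).
\end{equation*}
Taking the logarithm and substituting $tuz$ in place of $u$, I would then compute
\begin{equation*}
\int_0^1 \log\phi_{\mathbb{M}(1)}(tuz)\,dz=\sum_{j=1}^{k}\lambda_j\int_0^1\bigl(e^{ituzj}-1\bigr)dz=\sum_{j=1}^{k}\lambda_j\left(\frac{e^{ituj}-1}{ituj}-1\right),
\end{equation*}
using the elementary integral $\int_0^1 e^{ituzj}dz=(e^{ituj}-1)/(ituj)$.

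By Lemma \ref{running average lemma} this gives $\phi_{K(t)}(u)=\exp\bigl(t\sum_{j=1}^{k}\lambda_j\bigl(\tfrac{e^{ituj}-1}{ituj}-1\bigr)\bigr)$, and replacing $u$ by $u/t$ produces the claimed expression for $\phi_{\mathbb{M}_A(t)}(u)$. The apparent singularity at $u=0$ is removable (set $\phi_{\mathbb{M}_A(t)}(0)=1$), exactly as noted after Proposition \ref{ch.gsp}. There is no genuine obstacle here; the only thing to be careful about is invoking the L\'evy property of the GCP, which follows either from its definition as a compound Poisson process (as observed by \citet{Crescenzo2016}) or as a direct consequence of its independent and stationary increments.
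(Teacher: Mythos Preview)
Your proof is correct and follows exactly the approach the paper intends: the paper simply writes ``The proof is similar to that of Proposition \ref{ch.gsp} and hence omitted,'' and your argument is precisely that specialization, using Lemma \ref{running average lemma} applied to the L\'evy process $\{\mathbb{M}(t)\}_{t\ge 0}$ together with the elementary integral $\int_0^1 e^{ituzj}\,dz=(e^{ituj}-1)/(ituj)$.
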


\begin{proof}
The proof is similar to that of Proposition \ref{ch.gsp} and hence omitted.
\end{proof}

We set $\phi_{\mathbb{M}_A(t)}(0)=1$ to remove the singularity of $\phi_{\mathbb{M}_A}(t)$ at $u=0$. The next result shows the distribution of the running average process of a GCP.

\begin{theorem}\label{th.gcp} The running average process of GCP has a compound Poisson representation given by
\begin{equation*}
\mathbb{M}_A(t)\overset{d}{=}\sum_{i=1}^{N(t)}X_i,
\end{equation*}
where $X_i$s are independent and identically distributed random variables, independent of a Poisson process $\{N(t)\}_{t\ge 0}$ with parameter $\Lambda t$ and $\Lambda=\sum_{j=1}^{k}\lambda_j$. 
Moreover the random variable $X_i$ has the pdf
\begin{equation*}
f_{X
_i}(x)=\sum_{j=1}^{k}p_{V_j}(x)f_{U_j}(x),
\end{equation*}
where 
 \begin{align*}
    p_{V_j}(x)=P(V_j=x)=\frac{\lambda_j}{\Lambda}; \quad j=1,2,...,k \quad  \text{and} \quad U_j\sim \text{Uniform}[0,j]; \quad 0\leq x\leq j. 
\end{align*}
\end{theorem}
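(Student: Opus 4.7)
The plan is to mirror the strategy used in Theorem \ref{compound.gsp} for the GSP case, reducing the identification in distribution to an equality of characteristic functions. Since the characteristic function of $\mathbb{M}_A(t)$ is already available from Proposition \ref{77}, it suffices to compute the characteristic function of $Y(t)=\sum_{i=1}^{N(t)}X_i$ explicitly and check that the two expressions coincide.

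First, I would compute $\phi_{X_1}(u)$ directly from the prescribed density. Since $f_{X_1}(x)=\sum_{j=1}^{k}(\lambda_j/\Lambda)\,(1/j)\,I_{[0,j]}(x)$, a term-by-term integration yields
\begin{equation*}
\phi_{X_1}(u)=\sum_{j=1}^{k}\frac{\lambda_j}{\Lambda}\int_{0}^{j}\frac{e^{iux}}{j}\,dx=\frac{1}{\Lambda}\sum_{j=1}^{k}\lambda_j\left(\frac{e^{iuj}-1}{iuj}\right).
\end{equation*}

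Next, I would condition on $N(t)$ and use independence of the $X_i$'s from $N(t)$, exactly as in the proof of Theorem \ref{compound.gsp}, to obtain $\phi_{Y(t)}(u)=P^{*}(\phi_{X_1}(u))$, where $P^{*}$ is the p.g.f.\ of $N(t)\sim\mathrm{Poisson}(\Lambda t)$, namely $P^{*}(s)=e^{-\Lambda t(1-s)}$. Substituting the expression for $\phi_{X_1}(u)$ then gives
\begin{equation*}
\phi_{Y(t)}(u)=\exp\!\left(-\Lambda t\Big(1-\tfrac{1}{\Lambda}\sum_{j=1}^{k}\lambda_j\tfrac{e^{iuj}-1}{iuj}\Big)\right)=\exp\!\left(t\sum_{j=1}^{k}\lambda_j\Big(\tfrac{e^{iuj}-1}{iuj}-1\Big)\right),
\end{equation*}
which matches $\phi_{\mathbb{M}_A(t)}(u)$ from Proposition \ref{77}. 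Invoking the uniqueness theorem for characteristic functions then yields $Y(t)\stackrel{d}{=}\mathbb{M}_A(t)$.

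There is no real obstacle here: the proof is essentially a direct specialization of the GSP case, since removing the negative-support contributions (the $\mu_j$ terms and the $U[-j,0]$ components) turns the mixed double uniform density into the one-sided mixed uniform density claimed in the theorem. The only minor point of care is handling the singularity of $(e^{iuj}-1)/(iuj)$ at $u=0$ exactly as in Proposition \ref{ch.gsp}, by defining the characteristic function to equal $1$ there (which is automatic by continuity), so that the inversion/uniqueness step is applied to a genuine characteristic function.
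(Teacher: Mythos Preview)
Your proposal is correct and follows exactly the approach the paper intends: the paper omits the proof, stating only that it is similar to that of Theorem \ref{compound.gsp}, and your argument is precisely the specialization of that proof obtained by dropping the $\mu_j$ terms and the $U[-j,0]$ components. The computation of $\phi_{X_1}(u)$, the conditioning on $N(t)$ to obtain $\phi_{Y(t)}(u)=P^{*}(\phi_{X_1}(u))$, and the match with Proposition \ref{77} are all carried out correctly.
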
 

\begin{proof}
The proof is similar to that of Theorem \ref{compound.gsp} and hence omitted.
\end{proof}



\begin{corollary}
If $k=1$, then the running average process of a GCP and hence its compound Poisson representation  reduce to those of a standard Poisson process (see \citet{Xia2018}), and to those of a Poisson process of order $k$ (see \citet{Gupta2020}) if $\lambda_1=\cdots=\lambda_k$. 
\end{corollary}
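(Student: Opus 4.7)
The plan is to verify this corollary by directly specializing the compound Poisson representation established in Theorem \ref{th.gcp}, since the corollary is purely a reduction-of-parameters statement. I will handle the two cases in turn and in each case match both the Poisson intensity and the jump-size distribution with the known representations in the cited literature.

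First, for the case $k=1$, I would substitute $k=1$ into Theorem \ref{th.gcp}. The Poisson process $\{N(t)\}_{t\geq 0}$ then has parameter $\Lambda t = \lambda_1 t$, which agrees with the intensity appearing in the compound Poisson representation of the running average of a standard Poisson process in \citet{Xia2018}. The jump distribution simplifies via $p_{V_1}(x) = \lambda_1/\Lambda = 1$, so $f_{X_i}(x) = f_{U_1}(x)$, i.e., $X_i \sim \text{Uniform}[0,1]$, matching the uniform jumps of the standard Poisson running average. As a consistency check, I would evaluate the characteristic function in Proposition \ref{77} at $k=1$ to obtain
\begin{equation*}
\phi_{\mathbb{M}_A(t)}(u) = \exp\!\left(t\lambda_1\left(\frac{e^{iu}-1}{iu}-1\right)\right),
\end{equation*}
which coincides with the known characteristic function of the running average of the Poisson process.

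Next, for the case $\lambda_1 = \cdots = \lambda_k = \lambda$, I would substitute into the same theorem. Here $\Lambda = k\lambda$, so the governing Poisson process has parameter $k\lambda t$, matching the rate used in \citet{Gupta2020} for the Poisson process of order $k$. The mixing weights reduce to $p_{V_j}(x) = \lambda/(k\lambda) = 1/k$ for each $j=1,\dots,k$, so
\begin{equation*}
f_{X_i}(x) = \frac{1}{k}\sum_{j=1}^{k} f_{U_j}(x),
\end{equation*}
which is precisely the mixture-of-uniforms jump distribution for the Poisson process of order $k$. Again, a check at the level of characteristic functions through Proposition \ref{77} confirms agreement.

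There is no genuine obstacle in this proof; the main care is notational, namely ensuring the indexing conventions and normalization of the uniform mixtures in \citet{Xia2018} and \citet{Gupta2020} line up with those in Theorem \ref{th.gcp}. I would therefore write the proof as two short paragraphs of substitution, each concluded by a one-line identification of the limiting representation with the one from the cited reference.
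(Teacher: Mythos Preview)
Your proposal is correct and matches the paper's approach: the paper states this corollary without proof, treating it as an immediate specialization of Theorem~\ref{th.gcp} and Proposition~\ref{77}, which is exactly the direct substitution you carry out. Your added consistency checks via the characteristic function are a nice touch but not required.
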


\begin{remark}
In Theorem \ref{th.gcp}, $V_j$s have the same distribution as that of the random variables $X_i$s in the compound Poisson representation of GCP (see \citet{Crescenzo2016}).
\end{remark}
Using the properties of a compound Poisson process, it can be shown that
\begin{equation*}
    \mathbb{E}[\mathbb{M}_A(t)]=\frac{t}{2}\sum_{j=1}^{k}j\lambda_j,\quad
    \mathbb{V}[\mathbb{M}_A(t)]=\frac{t}{3}\sum_{j=1}^{k}j^2\lambda_j,\quad
     \text{Cov}[\mathbb{M}_A(s),\mathbb{M}_A(t)]
    =\frac{s}{3}\sum_{j=1}^{k}j^2\lambda_j\,, \qquad s<t.
\end{equation*}
The mean and variance of GCP are $\mathbb{E}[\mathbb{M}(t)]=t\sum_{j=1}^{k}j\lambda_j$ and $\mathbb{V}[\mathbb{M}(t)]=t\sum_{j=1}^{k}j^2\lambda_j$ so
\begin{equation*}
    \frac{\mathbb{E}[\mathbb{M}_A(t)]}{\mathbb{E}[\mathbb{M}(t)]}=\frac{1}{2} \qquad\text{and} \qquad \frac{\mathbb{V}[\mathbb{M}_A(t)]}{\mathbb{V}[\mathbb{M}(t)]}=\frac{1}{3}.
\end{equation*}
Note that 
    \begin{equation*}
    \mathbb{V}[\mathbb{M}_A(t)]-\mathbb{E}[\mathbb{M}_A(t)]=\frac{t}{3}\sum_{j=1}^{k}j^2\lambda_j-\frac{t}{2}\sum_{j=1}^{k}j\lambda_j
    =t\sum_{j=1}^{k}\lambda_j\left(\frac{j^2}{3}-\frac{j}{2}\right).
\end{equation*}
For $k=1$, we have $\mathbb{V}[\mathbb{M}_A(t)]-\mathbb{E}[\mathbb{M}_A(t)]<0$ which implies that the running average process of a Poisson process is always under-dispersed. However for $k>1$, $\mathbb{V}[\mathbb{M}_A(t)]-\mathbb{E}[\mathbb{M}_A(t)]>0 (<0)$ if $\sum_{j=1}^{k}\lambda_j(3j^2-2j)>0 (<0)$. So the running average process of a GCP may be over-dispersed or under-dispersed. The next result about the dependence structure of a GCP follows from Proposition \ref{lrd.gsp}.
\begin{proposition}
The running average process of a GCP has the LRD property.
\end{proposition}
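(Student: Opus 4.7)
The plan is to imitate the argument used for Proposition \ref{lrd.gsp} (the GSP case), since the covariance and variance formulas for the running average process of the GCP have already been computed just above the statement. In particular, I will appeal to the two identities
\[
\mathbb{V}[\mathbb{M}_A(t)]=\frac{t}{3}\sum_{j=1}^{k}j^{2}\lambda_{j},\qquad \text{Cov}[\mathbb{M}_A(s),\mathbb{M}_A(t)]=\frac{s}{3}\sum_{j=1}^{k}j^{2}\lambda_{j}\ \ (s<t),
\]
which are already established in the excerpt.

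Fix $s>0$. For $t>s$, I would form the correlation as the ratio of the covariance to the product of the standard deviations, and observe that the common factor $\sum_{j=1}^{k}j^{2}\lambda_{j}$ cancels cleanly. The resulting expression simplifies to
\[
\text{Corr}\bigl(\mathbb{M}_A(s),\mathbb{M}_A(t)\bigr)=\sqrt{\frac{s}{t}}=c(s)\,t^{-\theta},
\]
with $c(s)=\sqrt{s}>0$ and $\theta=1/2$.

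Finally, I would invoke the definition of the LRD property from section \ref{LRD/SRD}: since $\theta=1/2\in(0,1)$, the process $\{\mathbb{M}_A(t)\}_{t\geq 0}$ satisfies the asymptotic decay required for long range dependence, and the result follows. There is no real obstacle here; the computation is essentially identical to the GSP case (with $\mu_j=0$ effectively), and the entire argument reduces to one line of algebra plus an appeal to the LRD definition.
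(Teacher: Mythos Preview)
Your proposal is correct and matches the paper's approach: the paper simply notes that the result follows from Proposition \ref{lrd.gsp}, and your argument is exactly the specialization of that computation (effectively setting $\mu_j=0$) to obtain $\text{Corr}(\mathbb{M}_A(s),\mathbb{M}_A(t))=\sqrt{s/t}$ with $\theta=1/2\in(0,1)$.
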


Finally, we discuss the importance of running average processes in the following remark.
\begin{remark}
     Note that the Riemann integral in Lemma \ref{running average lemma} can be restated as $Y(t)=\int_{0}^{t}X(s)ds=\int_{a}^{a+t}X(s-a)ds$, that is, the process $\{X(s)\}_{s\geq 0}$ is shifted back in time by `$a$' units for the interval $(a,a+t)$. This implies that we can examine the running average of the original process for any time interval (of same length) by time shifting. Also the microscopic structure of the process during various time intervals is revealed by studying the corresponding running averages. For example, if we are modeling share price data for a particular day, then we can study its mean behavior during various time periods of trading activity, which differ in practice. 
\end{remark}

\section{Simulation}\label{sec 8}
In this section, we present algorithms and plots of simulated sample paths for GFSP, NGFSP and NHGFSP along with the p.m.f. for NGSP. We need to simulate two GFCPs, two NGFCPs and two NHGFCPs, and then take their differences to simulate GFSP, NGFSP and NHGFSP respectively. 

For simulation of GFCP, we use its compound Poisson representation given by \citet{Crescenzo2016}:
\begin{equation*}
\mathbb{M}^{\alpha}(t)\overset{d}{=}\sum_{i=1}^{N^{\alpha}(t)}X_i 
\end{equation*}
where $X_i$s are i.i.d. random variables such that $P\{X_1=j\}=\frac{\lambda_j}{\Lambda}$
and $N^{\alpha}(t)$ is an independent Fractional Poisson process with rate $\Lambda=\sum_{j=1}^{k}\lambda_j$. For simulation of NGFCP, we first modify the compound Poisson representation of GCP as
\begin{equation*}
\mathbb{M}(t)\overset{d}{=}\sum_{i=1}^{N(t)}X_i 
\end{equation*}
where $X_i$s are i.i.d. random variables such that $P\{X_1=j\}=\frac{\Lambda_j(t)}{\Lambda(t)}$
and $N(t)$ is an independent Poisson process with parameter $\Lambda(t)=\sum_{j=1}^{k}\Lambda_j(t)$. Without loss of generality, we take $\Lambda_j(t)=(a_j/b_j)(e^{b_jt}-1)+\mu_j(t)$ (Gompertz Makeham rate function) where $a_j,b_j,\mu_j>0$. Then we time change the GCP by an independent inverse stable subordinator to obtain NGFCP. For simulation of NHGFCP, see \eqref{def1}. First we provide the algorithm for simulating sample path of a GFSP. \\

\noindent
{\bf Algorithm for simulation of GFSP} 
\begin{table}[H]
\resizebox{17.0cm}{!}{
\begin{tabular}{@{}llll@{}}
\toprule
\multicolumn{4}{l}{\begin{tabular}[c]{@{}l@{}}\textbf{Input:} $T^* > 0$ and fractional index $\alpha$  \\
Choose rate parameters $\lambda_1, \lambda_2,...,\lambda_k$ for $\{\mathbb{M}^{\alpha}_{1}(t)\}_{t\geq 0}$ and $\mu_1, \mu_2,...,\mu_k$ for $\{\mathbb{M}^{\alpha}_{2}(t)\}_{t\geq 0}$ such that $\Lambda=T$ \\where $\Lambda=\sum_{j=1}^k\lambda_j$ and $T=\sum_{j=1}^k\mu_j$. \\

\textbf{Output:} $\mathcal{S}^{\alpha}(t)$, simulated sample path for GFSP.\\
\midrule
\textit{\quad Initialization: $t=0$, $\mathbb{M}^{\alpha}_{1}(t)=0$, $\mathbb{M}^{\alpha}_{2}(t)=0$.}\\

1: \textbf{while} $t<T^*$ do\\



2: Using Monte-Carlo simulation, generate independent discrete random variables $X_{1}$ and $X_{2}$ with \\

\quad $P(X_{1}=j)=\frac{\lambda_j}{\Lambda}$ and $P(X_{2}=j)=\frac{\mu_j}{T}$ for $1\le j\le k$.\\



3: Generate three independent uniform random variables $U_i \sim U(0,1);\quad i=1,2,3$.\\

4: Set $t = t + \left[\frac{|\ln U_1|^{1/\alpha}}{\Lambda^{1/\alpha}}\frac{sin(\alpha \pi U_2)[sin(1-\alpha)\pi U_2]^{1/\alpha}}{[sin (\pi U_2)]^{1/\alpha}|\ln U_3|^{1/\alpha -1}}\right]$.\\

5: Set $\mathbb{M}^{\alpha}_{1}(t)= \mathbb{M}^{\alpha}_{1}(t) + X_{1}$ and $\mathbb{M}^{\alpha}_{2}(t)= \mathbb{M}^{\alpha}_{2}(t) + X_{2} $\,. \\

6: Compute $\mathcal{S}^{\alpha}(t)= \mathbb{M}^{\alpha}_{1}(t)-\mathbb{M}^{\alpha}_{2}(t)$. \\

7: \textbf{end while} \\

8: \textbf{return  $\mathcal{S}^{\alpha}(t)$}. \end{tabular}}       \\

\midrule
\midrule
\multicolumn{4}{l}{\begin{tabular}[c]{@{}l@{}} 
\textbf{Remark 1:} For simulation of GSP, take $\alpha=1$ and change step 4 to $t = t + \left[-\frac{1}{\Lambda} \ln U\right]$ where $U\sim U(0,1)$. \\
\textbf{Remark 2:} For simulation of NGSP, in addition to changes in Remark 1, take $t=0.0001$ (initialization) \\ and $\lambda_j=\Lambda_j(t)$, $\Lambda=\Lambda(t)$, $\mu_j=T_j(t)$, $T=T(t)$.

\end{tabular}}\\
 \bottomrule 
\end{tabular}
}
\end{table}


\begin{figure}[H]
\centering
\includegraphics[scale=0.72]{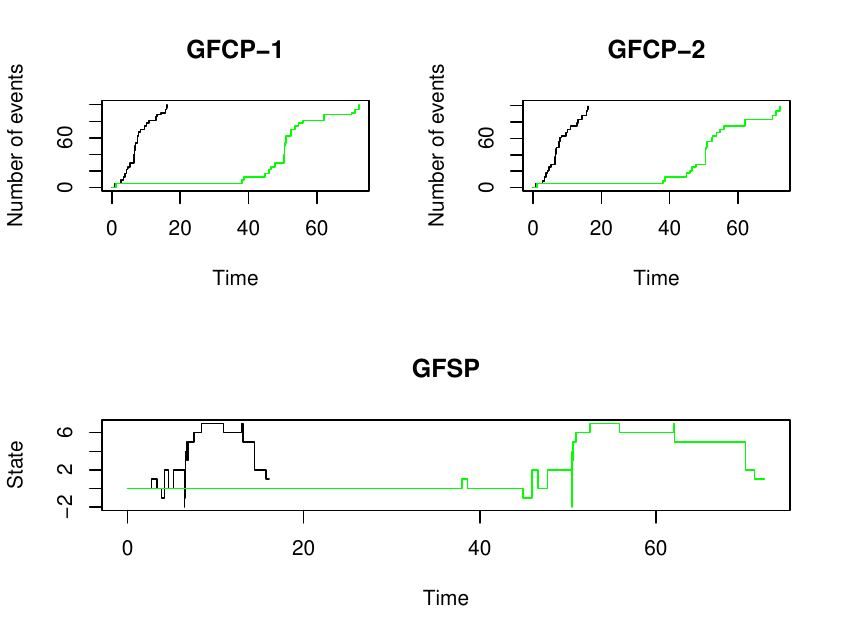}
\caption{Simulated sample paths of GFCP and GFSP for fractional indices $\alpha=0.8$ (black) and $\alpha=0.4$ (green). We considered the intensity parameters $\lambda_1=0.1,~\lambda_2=0.3,~\lambda_3=0.2,~\lambda_4=0.4,~\lambda_5=0.2$ for GFCP-1 and $\mu_1=0.2,~\mu_2=0.2,~\mu_3=0.2,~\mu_4=0.3,~\mu_5=0.3$ for GFCP-2.} \label{fig.gfsp}
\end{figure}

From Figure \ref{fig.gfsp}, we observe that a GFCP with a larger value of $\alpha$ attains any particular state usually in less time than a GFCP with a smaller $\alpha$ value, that is, at any fixed time point, a GFCP with a larger fractional index usually has more occurrences than a GFCP with a smaller fractional index. Now consider the time intervals for which the GFCPs have the same number of occurrences as indicated by flat regions in the graph. We observe that these intervals are usually shorter for larger $\alpha$ values than those for smaller $\alpha$ values. Finally, all the above observations for various fractional indices of GFCPs hold true for GFSPs also. 

Next consider the simulation of sample path of a NGFSP, which involves the following steps.
\begin{itemize}
\item[1.] We will simulate inverse stable subordinator values $Y_\alpha(t_i)$ at time points $t_i$.
\item[2.] Using the simulated values $Y_\alpha(t_i)$ as the new time points, we will simulate the NGFSP.
\end{itemize} 
\noindent
{\bf Algorithm for simulation of inverse stable subordinator}(see \citet{Maheshwari2019}) 

\begin{table}[H]
\resizebox{17.0cm}{!}{
\begin{tabular}{@{}llll@{}}
\toprule
\multicolumn{4}{l}{\textbf{Input:} Choose the fractional index $\alpha$ and $n$ uniformly spaced time points $0=\tau_1,\ldots,\tau_n=T^*$ such that} \\ { $\tau_{j}-\tau_{j-1}=h$ for $2\le j\le n$.} \\

\,\,\textbf{Output:} $Y_\alpha(t)$, the inverse stable subordinator value. \\ 
\midrule
\multicolumn{4}{l}{\begin{tabular}[c]{@{}l@{}}
\textit{Initialization:} $i=0$, $t_0=0$, $Q_0=0$.\\
1: \textbf{while} $t_i<T^*$ do\\

2: Generate increment $Q_{i+1}$ of an independent stable subordinator $D_\alpha(t)$ as follows. \\
    \quad (a) Generate $U\sim U[0,\pi]$ and $V\sim \exp(1)$. \\
    \quad(b) Compute $Q_{i+1}$ for the time interval $(\tau_{j-1}, \tau_{j})$ as \\ 
    \quad $Q_{i+1}=D_\alpha(\tau_j)-D_\alpha(\tau_{j-1})\stackrel{d}{=}D_\alpha(h)=h^{1/\alpha}\frac{\left(\sin \alpha U\right)\left(\sin (1-\alpha)U\right)^{(1-\alpha)/\alpha}}{\left(\sin U\right)^{1/\alpha}V^{(1-\alpha)/\alpha}}$.\\
3:\, Set $Y_i=Y(\ceil{t_i/h} + 1)=\cdots= Y(\lfloor (t_i+Q_{i+1})/h\rfloor+1)=h*i$.\\
4:\, $t_{i+1}=t_i+Q_{i+1}$, $i=i+1$.\\
5: \textbf{end while}\\
6: \textbf{return} $Y_\alpha(t_i)$.\\
\quad The discretized sample path of $D_\alpha(t)$ at $t_i$ is $\sum_{k=0}^i Q_k$ while the discretized sample path of $Y_\alpha(t)$ at $t_i$ is $Y_i$.\\
\end{tabular}}       \\
\bottomrule 
\end{tabular}
}
\end{table}
\noindent
{\bf Algorithm for simulation of NGFSP} 
\begin{table}[H]
\resizebox{17.0cm}{!}{
\begin{tabular}{@{}llll@{}}
\toprule
\multicolumn{4}{l}{\begin{tabular}[c]{@{}l@{}}\textbf{Input:} $Y_\alpha(t_i)$, the simulated inverse stable subordinator values. Choose cumulative rate functions \\ $\Lambda_1(t),\ldots,\Lambda_k(t)$ for $\{\mathcal{M}^{\alpha}_{1}(t)\}_{t\geq 0}$ and $T_1(t),\ldots,T_k(t)$ for $\{\mathcal{M}^{\alpha}_{2}(t)\}_{t\geq 0}$ such that  $\Lambda(t)=T(t)$ for all $t>0$ \\ where $\Lambda(t)=\sum_{j=1}^k\Lambda_j(t)$ and $T(t)=\sum_{j=1}^kT_j(t)$. \\

\textbf{Output:} $\mathcalboondox{S}^{\alpha}(t)$, simulated sample path for NGFSP.\\
\midrule
\textit{\quad Initialization:} $t=0.0001$, $Z_{1i}=0$, $Z_{2i}=0$, $\mathcal{M}_{1}(Y_\alpha(0))=0$, $\mathcal{M}_{2}(Y_\alpha(0))=0$.\\

1: \textbf{while} $t\le Y_\alpha(t_i)$ do\\

2: Using Monte-Carlo simulation, generate independent discrete random variables $X_{1i}$ and 
$X_{2i}$ with \\

\quad $P(X_{1i}=j)=\frac{\Lambda_j(t)}{\Lambda(t)}$ and $P(X_{2i}=j)=\frac{T_j(t)}{T(t)}$ for $1\le j\le k$. \\

3: Set $Z_{1i}=Z_{1i}+X_{1i}$ and $Z_{2i}=Z_{2i}+X_{2i}$. \\



4: Generate a uniform random variable $U \sim U(0,1) $.\\

5. Set $t= t + \left(\frac{-1}{\Lambda(t)}\ln{U}\right) $.\\ 

6: \textbf{end while} \\

7: Set $\mathcal{M}_{1}(Y_\alpha(t_i))=\sum_{j=1}^{i}Z_{1j}$ and $ \mathcal{M}_{2}(Y_\alpha(t_i))=\sum_{j=1}^{i}Z_{2j} $\,. \\

8: Compute $\mathcalboondox{S}^{\alpha}(t_i)= \mathcal{M}_{1}(Y_\alpha(t_i))-\mathcal{M}_{2}(Y_\alpha(t_i))$. \\

9: \textbf{return  $\mathcalboondox{S}^{\alpha}(t_i)$}. \end{tabular}} \\


 \bottomrule 
\end{tabular}
}
\end{table}


\begin{figure}[H]
\centering
\includegraphics[scale=0.8]{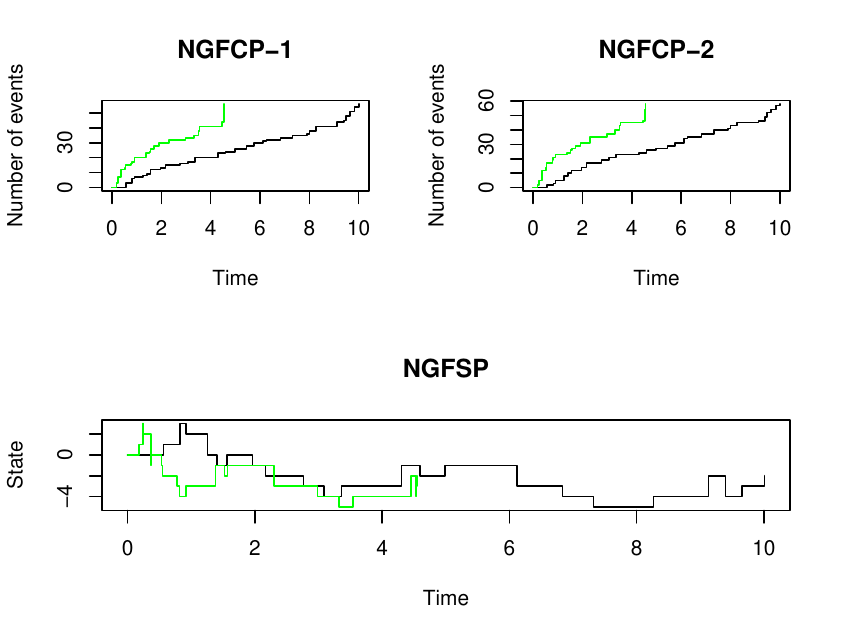}
\caption{Simulated sample paths of NGFCP and NGFSP for fractional indices $\alpha=0.8$ (black) and $\alpha=0.4$ (green). We considered three different  Gompertz-Makeham's rate functions with parameters $a_1=0.6,~b_1=0.1,~\mu_1=5,~a_2=0.7,~b_2=0.2,~\mu_2=4,~a_3=0.4,~b_3=0.3,~\mu_3=7$ for NGFCP-1 and parameters $c_1=0.7,~d_1=0.2,~\nu_1=4,~c_2=0.4,~d_2=0.3,~\nu_2=7,~c_3=0.6,~d_3=0.1,~\nu_3=5$ for NGFCP-2.} \label{fig.ngfsp}
\end{figure}

From Figure \ref{fig.ngfsp}, it's clear that the earlier observations related to GFCP and GFSP for different fractional indices also hold true for NGFCP and NGFSP.\\

\noindent
{\bf Algorithm for simulation of NHGFSP}

\begin{table}[H]
\resizebox{17.0cm}{!}{
\begin{tabular}{@{}llll@{}}
\toprule
\multicolumn{4}{l}{\begin{tabular}[c]{@{}l@{}}\textbf{Input:} $T^* > 0$ and fractional index $\alpha$. Choose cumulative rate functions  $\Lambda_1(t),\ldots,\Lambda_k(t)$ for $\{\widetilde{\mathcal{M}}^{\alpha}_{1}(t)\}_{t\geq 0}$ \\and $T_1(t),\ldots,T_k(t)$ for $\{\widetilde{\mathcal{M}}^{\alpha}_{2}(t)\}_{t\geq 0}$ such that  $\Lambda(t)=T(t)$ for all $t>0$. \\ 

\textbf{Output:} $\widetilde{\mathcalboondox{S}}^{\alpha}(t)$, simulated sample path for NHGFSP.\\
\midrule
\textit{\quad Initialization: $t=0.0001$, $\widetilde{\mathcal{M}}^{\alpha}_{1}(t)=0$, $\widetilde{\mathcal{M}}^{\alpha}_{2}(t)=0$.}\\

1: \textbf{while} $t<T^*$ do\\



2: Using Monte-Carlo simulation, generate independent discrete random variables $X_{1}$ and $X_{2}$ with \\

\quad $P(X_{1}=j)=\frac{\Lambda_j(t)}{\Lambda(t)}$ and $P(X_{2}=j)=\frac{T_j(t)}{T(t)}$ for $1\le j\le k$.\\



3: Generate three independent uniform random variables $U_i \sim U(0,1);\quad i=1,2,3$.\\

4: Set $t = t + \left[\frac{|\ln U_1|^{1/\alpha}}{(\Lambda(t))^{1/\alpha}}\frac{sin(\alpha \pi U_2)[sin(1-\alpha)\pi U_2]^{1/\alpha-1}}{[sin (\pi U_2)]^{1/\alpha}|\ln U_3|^{1/\alpha -1}}\right]$.\\

5: Set $\widetilde{\mathcal{M}}^{\alpha}_{1}(t)= \widetilde{\mathcal{M}}^{\alpha}_{1}(t) + X_{1}$ and $\widetilde{\mathcal{M}}^{\alpha}_{2}(t)= \widetilde{\mathcal{M}}^{\alpha}_{2}(t) + X_{2} $\,. \\

6: Compute $\widetilde{\mathcalboondox{S}}^{\alpha}(t)= \widetilde{\mathcal{M}}^{\alpha}_{1}(t)-\widetilde{\mathcal{M}}^{\alpha}_{2}(t)$. \\

7: \textbf{end while} \\

8: \textbf{return $\widetilde{\mathcalboondox{S}}^{\alpha}(t)$}. \end{tabular}}       \\
     \\
\bottomrule 
\end{tabular}
}
\end{table}

\begin{figure}[H]
\centering
\includegraphics[scale=0.8]{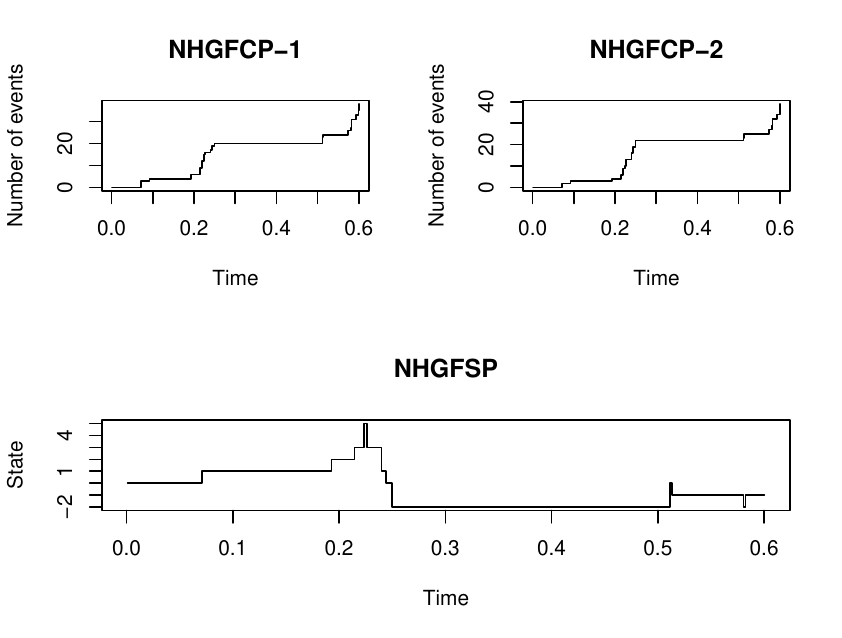}
\caption{Simulated sample paths of NHGFCP and NHGFSP for fractional index $\alpha=0.7$. We considered three different  Gompertz-Makeham's rate functions with parameters $a_1=5,~b_1=0.5,~\mu_1=20,~a_2=2,~b_2=0.2,~\mu_2=22,~a_3=4,~b_3=0.3,~\mu_3=17$ for NHGFCP-1 and parameters $c_1=2,~d_1=0.2,~\nu_1=22,~c_2=4,~d_2=0.3,~\nu_2=17,~c_3=5,~d_3=0.5,~\nu_3=20$ for NHGFCP-2.} \label{fig.nhgfsp}
\end{figure}

\begin{figure}[H] 
\centering
\includegraphics[scale=0.8]{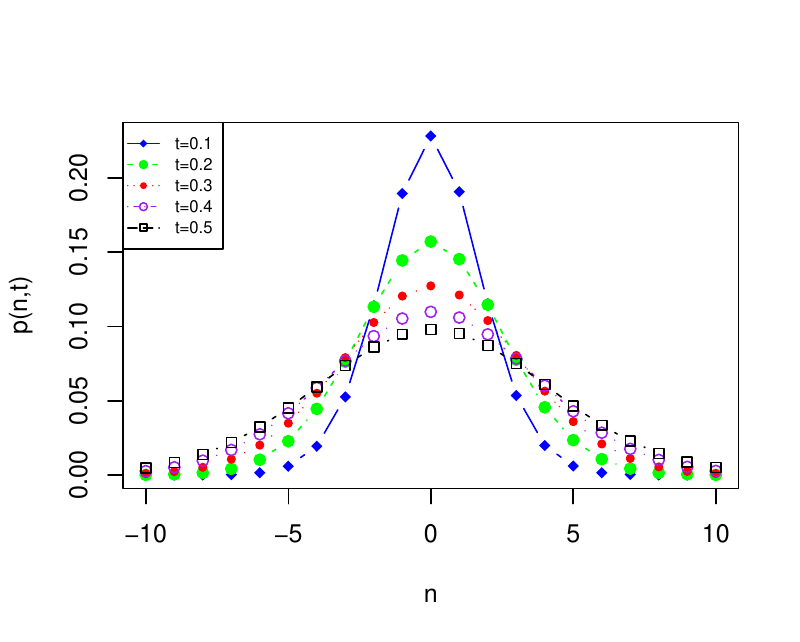}
\caption{p.m.f. of NGSP at times $t = 0.1$, $0.2$, $0.3$, $0.4$, $0.5$ using the Gompertz-Makeham's rate function}\label{fig.ngsp}
\end{figure} 

From Figure \ref{fig.ngsp}, we can observe that the p.m.f. of NGSP is symmetric around zero irrespective of the time instances. This is due to symmetry of the modified Bessel function of the first kind (see Theorem \ref{20}). 

\begin{remark}
From \eqref{53}, note that the p.m.f. of NGFSP involves the density of inverse stable subordinator $h_\alpha(x,t)$ which can be expressed in terms of the Wright generalized Bessel function (see \citet{Leonenko2017}) as follows:

\begin{align*}
h_\alpha(x,t)=\frac{t}{\alpha x^{1+\frac{1}{\alpha}}}g_\alpha \left(\frac{t}{x^{\frac{1}{\alpha}}}\right), \,\, x,t\geq 0 \quad \text{for} \quad g_\alpha(z)=\frac{1}{z}W_{-\alpha,0}\left(-\frac{1}{z^\alpha}\right),\,\,z>0 \qquad 
\end{align*}
where
\begin{equation}\label{82}
W_{\gamma,\beta}= \sum_{k=0}^{\infty} \frac{z^k}{\Gamma(1+k)\Gamma(\beta+\gamma k)}, \quad z\in \mathbb{C}, \gamma > -1,~\beta\in \mathbb{R}.
\end{equation} 
However, this function is not pre-defined in R software and defined for only non-negative parameter values in Python unlike \eqref{82}. Hence the the p.m.f. of NGFSP can't be plotted using these softwares.  
\end{remark} 

\section{Application}\label{sec 9}
In this section, we discuss an application of NGFSP in high-frequency financial data analysis to model complex changing patterns in high-frequency  trade executions during rapid market movements.

Previous studies on this subject include various models based on the difference of two point processes. \citet{Barndorff2012} and \citet{Carr2011} considered the difference of Poisson processes (also known as Skellam process) to model financial transactions whereas \citet{Bacry2013a, BACRY20132475}) evaluated the use of Hawkes processes. The main drawback of these models is their exponential inter-arrival times (i.e. time between trades), which contradict empirical evidence. To overcome this drawback, \citet{Kerss2014} used the fractional Skellam process of type I which is the difference between two time fractional Poisson processes. The main advantage of this process is the Mittag-Leffler inter-arrival times, which are empirically observed to have long memory. Additionally, for an application of integer-valued L\'{e}vy processes in financial econometrics, we refer the reader to \citet{Barndorff2012}, where they have used the difference between two stable subordinators.

\citet{Kerss2014} reviewed three-month transaction records of a forward asset (called future) containing 5,465,779 timestamped transactions recorded over market opening hours. They defined the bid-price function on the basis of the price of the forward asset at time $t$ denoted by $F(t)$. The bid-ask price spread was fixed to a single tick of 0.0001. \citet{Kerss2014} defined $F(t)^{bid}$ as follows:
\begin{equation}\label{81}
   F(t)^{bid}= \begin{cases}
F(t-1)^{bid} \qquad \text{if}\,\, F(t-1)^{bid}\leq F(t)\leq F(t-1)^{ask}\\
F(t) \quad\qquad\qquad \text{if}\,\, F(t)< F(t-1)^{bid}\\
F(t)-0.0001 \quad \text{if}\,\, F(t)> F(t-1)^{ask}.
\end{cases}
\end{equation}
The data was filtered by deleting all the entries where the bid price had not changed from the previous bid price (case 1 of \eqref{81}), i.e., no up or down jump had occurred, leaving 682,550 records. The price data was modeled using the fractional Skellam process of type I, as shown in the first graph in Figure 2 of \citet{Kerss2014}, which shows both the original and the filtered data. Up jump data (the upward changes in prices as defined by case 3 of \eqref{81}) and the down jump data (the downward changes in prices as defined by case 2 of \eqref{81}) can be modeled using the time fractional Poisson process as shown in the second graph in Figure 2 of \citet{Kerss2014}. The up and down jump time series was constructed by removing all trades with negative jumps as well as removing all the duplicate time stamps, leaving 253,092 and 281,833 observations for up and down jumps, respectively. Finally, Figure 3 of \citet{Kerss2014} showed that the Mittag-Leffler distribution provides a better fit than the exponential distribution for inter-arrival times (time between trades).

Next, we explain how our proposed model NGFSP better fits both the first and second graphs in Figure 2 of \citet{Kerss2014}. It is observed from the original data that almost ninety-eight per cent of jumps are of single tick magnitude. However, to make the model more sensible, it needs to be extended to allow jumps greater than one tick, thereby accounting for the remaining data. In practice, prices can fluctuate in both directions at greater magnitudes based on a number of different reasons. Such price changes should be incorporated into a model by allowing larger jump sizes with varying intensities. It has also been commonly observed that fluctuations in asset prices are not always consistent. The market conditions or volatility may vary based on the time of day or on various days of the month, and the fluctuations may follow specific patterns. For example, volatility is usually higher during the opening or closing hours on a given trading day or on days affected by some catastrophic news. To address this non-homogeneity in price movement, a model should accommodate various possible non-homogeneous rates rather than just homogeneous rates.

Our proposed model allows jumps of different magnitudes with different intensities and also incorporates different non-homogeneous rate functions. This makes NGFSP well-suited to analyze high-frequency financial data. Moreover note that NGFCP has Mittag-Leffler inter-arrival times which provide a better fit than exponentially distributed inter-arrival times for such data. In summary, NGFCP may be used to model the up and down jump processes while NGFSP may be used to model the original stock price data. 


\section*{Declarations}
\noindent 
{\bf Conflict of Interest}. The authors declare that they have no conflict of interest.

\bibliographystyle{plainnat}
\bibliography{NGFSP}

\end{document}